\pgfplotsset{compat=newest}
\theoremstyle{plain}
\newtheorem*{theo*}{Theorem}
\newtheorem{theorem}{Theorem}[section]
\newtheorem{corollary}[theorem]{Corollary}
\newtheorem{proposition}[theorem]{Proposition}
\newtheorem{lemma}[theorem]{Lemma}
\newtheorem{definition}[theorem]{Definition}
\newtheorem{hyp}{Assumption}
\newenvironment{chyp}[1]
  {\ichyp}
  {\endichyp}
\newenvironment{cprop}[1]
  {\icprop}
  {\endicprop}
\theoremstyle{remark}
\newtheorem{remark}[theorem]{Remark}
\newtheorem*{rem*}{Remark}
\newcommand{\cf}{\ensuremath{\mathscr{F}}}
\newcommand{\cg}{\ensuremath{\mathscr{G}}}
\newcommand{\ind}[1]{\mathbb{1}_{#1}}
\newcommand{\un}{\mathbb{1}}
\newcommand{\zero}{\mathbb{0}}
\newcommand{\ca}{\ensuremath{\mathscr{A}}}
\newcommand{\cl}{\ensuremath{\mathscr{L}^\infty}}
\newcommand{\kkk}{{\ensuremath{\bm{\kk}}}}
\date{\today}
\author{Jean-François Delmas}
\address{Jean-François Delmas,
  CERMICS, \'{E}cole nationale des ponts et chauss\'ees, 77455
  Marne-la-Vall\'ee, France} 
\email{jean-francois.delmas@enpc.fr}
\author{Dylan Dronnier}
\address{Dylan Dronnier,
  CERMICS, \'{E}cole nationale des ponts et chauss\'ees, 77455
  Marne-la-Vall\'ee, France} 
\email{dylan.dronnier@enpc.fr}
\author{Pierre-André Zitt}
\address{Pierre-André Zitt, LAMA, Université Gustave Eiffel, 77420
Champs-sur-Marne, France}
\email{pierre-andre.zitt@univ-eiffel.fr}
\newcommand{\norm}[1]{\left\lVert\,#1\,\right\rVert}
\newcommand{\R}{\ensuremath{\mathbb{R}}}
\newcommand{\N}{\ensuremath{\mathbb{N}}}
\newcommand{\cpa}{\mathcal{P}^\mathrm{Anti}}
\newcommand{\F}{\ensuremath{\mathcal{F}}}
\newcommand{\AF}{\ensuremath{\mathcal{F}^\mathrm{Anti}}}
\newcommand{\kk}{\ensuremath{\mathrm{k}}}
\newcommand{\rd}{\ensuremath{\mathrm{d}}}
\newcommand{\cp}{\ensuremath{\mathcal{P}}}
\newcommand{\Tinf}{\ensuremath{\mathcal{T}}}
\newcommand{\etauc}{\ensuremath{\eta^\mathrm{uni}_\mathrm{crit}}}
\newcommand{\etau}{\ensuremath{\eta^\mathrm{uni}}}
\newcommand{\FF}{\ensuremath{\mathbf{F}}}
\newcommand{\costu}{\ensuremath{C_\mathrm{uni}}}
\newcommand{\Cinf}{\ensuremath{C_\star}} 
\newcommand{\Csup}{\ensuremath{C^\star}} 
\newcommand{\oa}{\ensuremath{\Omega_\mathrm{a}}}
\newcommand{\oi}{\ensuremath{\Omega_\mathrm{i}}}
\newcommand{\I}{\ensuremath{\mathfrak{I}}}
\newcommand{\optProb}[3]{%
  \begin{equation}\label{#1}%
    \begin{cases}
      \textbf{Minimize: } & #2 \\
      \textbf{subject to: } & #3
    \end{cases}
  \end{equation}%
}
\newcommand{\optProbminmax}[3]{%
  \begin{equation}\label{#1}%
    \begin{cases}
      \textbf{Minimize: } & #2 \\
      \textbf{subject to: } & #3
    \end{cases}
   \quad  \quad\text{and}\quad \quad \quad 
      \begin{cases}
      \textbf{Maximize: } & #2 \\
      \textbf{subject to: } & #3
    \end{cases}
  \end{equation}%
}
\newcommand{\optProbTerm}[3]{%
  \begin{equation}\label{#1}%
    \begin{cases}
      \textbf{Minimize: } & #2 \\
      \textbf{subject to: } & #3
    \end{cases}
  \end{equation}%
 }
\newcommand{\optProbTerM}[3]{%
  \begin{equation}\label{#1}%
    \begin{cases}
      \textbf{Maximize: } & #2 \\
      \textbf{subject to: } & #3
    \end{cases}
  \end{equation}%
 }
\newsavebox{\largestimage}
\pgfplotsset{colormap/Paired-6}
\newcommand{\loss}{\ensuremath{\mathrm{L}}}
\newcommand{\lossup}{\ensuremath{\loss^\star}} 
\newcommand{\lossinf}{\ensuremath{\loss_\star}} 
\newcommand{\costa}{\ensuremath{C_\mathrm{aff}}} 
\newcommand{\costad}{\ensuremath{c_\mathrm{aff}}}
\newcommand{\param}{\ensuremath{\mathrm{Param}}}
\newcommand{\cplus}{c_{0}}
\newcommand{\maxcost}{c_{\max}}
\newcommand{\maxloss}{\ell_{\max}}
\newcommand{\etainf}{\eta_{\text{\tiny SO}}}
\newcommand{\etasup}{\eta_{\text{\tiny NE}}}
\newcommand{\cfi}{\ensuremath{\cf_\mathrm{inv}}}
\newcommand{\co}{\mathcal O}
\newcommand{\Id}{\ensuremath{\mathrm{Id}}}
\newcommand{\sq}{\ensuremath{\mathcal{A}}}
\newcommand{\DeltaSIS}{\Delta_{\mathrm{SIS}}}
\newcommand{\DeltaOsc}{\Delta_{\mathrm{Osc}}}
\newcommand{\DeltaB}{\Delta_{\mathrm{Block}}}
\newcommand{\DeltaOrd}{\Delta_{\mathrm{Ord}}}
\newcommand{\KB}{K_{\mathrm{Block}}}
\newcommand{\KOrd}{K_{\mathrm{Ord}}}
\title{Optimal vaccination strategies for an heterogeneous  SIS model}
\begin{document}

\thanks{This work is partially supported by Labex B\'ezout reference ANR-10-LABX-58 and  SNF 200020-19699}

\subjclass[2010]{92D30, 58E17, 47B34, 34D20}

\keywords{SIS Model, infinite-dimensional ODE,  vaccination strategy,
 effective reproduction number, multi-objective optimization, Pareto frontier}

\begin{abstract}
  We  study  in  a  general
  mathematical  framework  the  optimal  allocation  of  vaccine  in  an
  heterogeneous population.   We cast
  the  problem of  optimal  vaccination as  a bi-objective  minimization
  problem  $\min(C(\eta),\loss(\eta))$,  where  $C$  and  $\loss$  stand
  respectively for  the cost  and the loss  incurred when  following the
  vaccination strategy~$\eta$,  where the function  $\eta(x)$ represents
  the proportion of non-vaccinated among individuals of feature~$x$.

  To measure  the loss,  we consider  either the  effective reproduction
  number, a classical quantity appearing in many models in epidemiology,
    or   the  overall  proportion  of  infected
  individuals after vaccination  in the maximal equilibrium,
  also called  the endemic state.  We  only make few assumptions  on the
  cost $C(\eta)$, which  cover in particular the uniform  cost, that is,
  the total number of vaccinated people.

  The  analysis of  the bi-objective  problem  is carried  in a  general
  framework, and we  check that it is  well posed for the  SIS model and
  has  Pareto  optima,   which  can  be  interpreted   as  the  ``best''
  vaccination strategies.   We provide  properties of  the corresponding
  Pareto frontier  given by  the outcomes  $( C(\eta),  \loss(\eta))$ of
  Pareto optimal strategies.

\end{abstract}

\maketitle

\section{Introduction}

\subsection{Motivation}

Targeted  vaccination  problems  have  mainly  been  studied  using  two
different points of view: eradication of a disease in heterogeneous
population, or removing vertices of a network for minimizing its
spectral radius.

\subsubsection{Optimal vaccination in heterogeneous population}

Increasing  the prevalence  of  immunity from  contagious  disease in  a
population limits the circulation of the infection among the individuals
who lack  immunity. This so-called  ``herd effect'' plays  a fundamental
role in epidemiology as it has had  a major impact in the eradication of
smallpox  and  rinderpest  or  the  near  eradication  of  poliomyelitis
\cite{HerdImmunityFine2011}. Targeted  vaccination strategies,  based on
the  heterogeneity of  the infection  spreading in  the population,  are
designed to  increase the  level of  immunity of  the population  with a
limited quantity of vaccine. These strategies rely on identifying groups
of individuals  that should be vaccinated  in priority in order  to slow
down or eradicate the disease.

In this article, we establish  a theoretical framework to study targeted
vaccination  strategies for  the deterministic  infinite-dimensional SIS
model    (with   S=Susceptible    and   I=Infectious)    introduced   in
\cite{delmas_infinite-dimensional_2020}, that  encompasses as particular
cases  the SIS  model  on graphs  or on  stochastic  block models.   The
simplicity  of the  SIS  model allows  us to  derive  strong results  on
optimal  vaccination strategies  and  existence of  Pareto optima  under
minimal assumptions for general non-homogeneous populations.  On the one
hand, using  this mathematical framework,  we can consider  in companion
papers a series of particular  cases and specific examples that complete
and illustrate  the present  work; see Section~\ref{sec:suite}  for more
details.  On the other hand, we expect the results obtained here for the
SIS model  to be generic,  in the  sense that behaviours  exhibited here
should  be  also  observed  in  more realistic  and  complex  models  in
epidemiology for non-homogeneous populations; in this direction, see for
example the discussion in \cite{ddz-Re}.

\subsubsection{Removing vertices of a network in an optimal way}

Mathematical epidemilogists usually consider a small number of
subpopulations, often with a dense structure of interaction, in the
sense that every subpopulation may directly infect all the others, see
Section~\ref{sec:lite} for further references. Other research
communities have looked into a similar problem for graphs.  Indeed,
given a large, possibly random, graph, with epidemic dynamics on it,
and supposing  we are able to suppress vertices by vaccinating,
one may ask for the best way to choose which vertices to remove.

The importance of the spectral radius of the network, that is, the
spectral radius of its adjacency matrix, has been quickly identified
as its value determines if the epidemic dies out quickly or survives
for a long time \cite{TheEffectOfNGaneshNone,
  CharacterizingRestre2006}. Since Van Mieghem \textit{et al.}\ proved
in~\cite{DecreasingTheVanMi2011} that the problem of minimizing
spectral radius of a graph by removing a given number of vertices is
NP-complete (and therefore unfeasible in practice), many computational
heuristics have been put forward to give approximate solutions; see
for example~\cite{ApproximationASaha2015} and references therein.  The
theoretical framework developed below gives the existence of Pareto optima
for the continuous relaxation of this problem.

\subsection{Herd immunity and targeted vaccination strategies}\label{subsec:problem}

We first recall  classical  results in mathematical epidemiology; we
refer to Keeling and Rohani's monograph~\cite{keeling_modeling_2008} for
an extensive  introduction to this  field, including details  on various
classical  models  such as  SIS,  SIR  and  SEIR (with  R=Recovered  and
E=Exposed).

In  an  homogeneous population,  the  basic  reproduction number  of  an
infection, denoted by~$R_0$, is defined as the number of secondary cases
one individual  generates on average  over the course of  its infectious
period, in an otherwise uninfected (susceptible) population. This number
plays  a fundamental  role in  epidemiology as  it provides  a scale  to
measure how difficult an infectious  disease is to control. Intuitively,
the  disease  should  die  out  if~$R_0<1$  and  invade  the  population
if~$R_0>1$. For many classical mathematical models of epidemiology, such
as  SIS   or  S(E)IR,   this  intuition  can   be  made   rigorous:  the
quantity~$R_0$ may be computed from the parameters of the model, and the
threshold phenomenon occurs.

Assuming~$R_0>1$ in an homogeneous population, suppose now that only a proportion~$\etau$
of the population can catch the disease, the rest being immunized. An infected
individual will now only generate~$\etau R_0$ new cases, since a proportion~$(1-\etau)$ of
previously successful infections will be prevented. Therefore, the new \emph{effective
reproduction number} is equal to~$R_e(\etau) = \etau R_0$. This fact led to the recognition
by Smith in 1970 \cite{smith_prospects_1970} and Dietz in 1975
\cite{dietz1975transmission} of a simple threshold theorem: the incidence of an infection
declines if the proportion of non-immune individuals is reduced below~$\etauc = 1/R_0$.
This effect is called \emph{herd immunity}.

It is of course unrealistic  to depict human populations as homogeneous,
and many generalizations of the homogeneous model have been studied, see
\cite[Chapter   3]{keeling_modeling_2008}  for   examples  and   further
references. For most  of these generalizations, it is  still possible to
define  a  meaningful  reproduction   number~$R_0$,  as  the  number  of
secondary cases  generated by  a \textit{typical}  infectious individual
when all  other individuals are uninfected  \cite{Diekmann1990}. After a
vaccination  campaign, let  the vaccination  strategy $\eta$  denote the
(non necessarily homogeneous)  proportion of the \textbf{non-vaccinated}
population, and let the effective reproduction number $R_e(\eta)$ denote
the corresponding reproduction number  of the non-vaccinated population.
The vaccination  strategy~$\eta$ is \emph{critical} if  $R_e(\eta) = 1$.
The possible choices  of $\eta$ naturally raises a question  that may be
expressed    as   the    following   informal    optimization   problem:
\optProb{eq:informal_optim1}{%
  \text{the quantity of vaccine to administrate}}{\text{herd immunity is
    reached,  that  is,~$R_e\leq 1$.}}   If  the  quantity of  available
vaccine    is   limited,    then    one   is    also   interested    in:
\optProb{eq:informal_optim2}{%
  \text{the effective reproduction number~$R_e$}}{\text{a given quantity
    of     available    vaccine.}}      Interestingly    enough,  when
$R_0>1$,    the
strategy~$\etauc$,   which   consists    in   delivering   the   vaccine
\emph{uniformly} to  the population,  without taking  inhomogeneity into
account,  leaves   a  proportion~$\etauc=   1/R_0$  of   the  population
unprotected,  and  is  therefore critical  since~$R_e(\etauc)  =1$.   In
particular     it     is     admissible     for     the     optimization
problem~\eqref{eq:informal_optim1}.

However, herd immunity may be achieved even if the proportion of unprotected people is
\emph{greater} than~$1/R_0$, by targeting certain group(s) within the population; see
Fig.~3.3 in \cite{keeling_modeling_2008}.

\medskip

The main goal of this paper is two-fold: formalize the optimization problems
\eqref{eq:informal_optim1} and~\eqref{eq:informal_optim2} for a particular infinite
dimensional SIS model, recasting them more generally as a bi-objective optimization
problem; and give existence and properties of solutions to this bi-objective problem. We
will also consider a closely related problem, where one wishes to minimize the size of the
epidemic rather than the reproduction number.

\subsection{Literature on targeted vaccination strategies}
\label{sec:lite}

Problems~\eqref{eq:informal_optim1} and~\eqref{eq:informal_optim2} have been examined in
depth for deterministic \emph{meta-population} models, that is, models in which an
heterogeneous population is stratified into a finite number of homogeneous subpopulations
(by age group, gender, \ldots). Such models are specified by choosing the sizes of the
subpopulations and quantifying the degree of interactions between them, in terms of
various mixing parameters. In this setting,~$R_0$ can often be identified as the spectral
radius of a \emph{next-generation matrix} whose coefficients depend on the subpopulation
sizes, and the mixing parameters. It turns out that the next generation matrices take
similar forms for many dynamics (SIS, SIR, SEIR,...); see the discussions in~\cite[Section
10]{hill-longini-2003} and~\cite[Section 2]{ddz-Re}.  Vaccination strategies are defined
as the levels at which each sub-population is immunized. After vaccination, the
next-generation matrix is changed and its new spectral radius corresponds to the effective
reproduction number~$R_e$.

Problem~\eqref{eq:informal_optim1} has been studied in this setting by Hill and Longini
\cite{hill-longini-2003}. These authors studied the geometric properties of the so-called
threshold hypersurface, that is, the vaccination allocations for which~$R_e = 1$. They also
computed the vaccination belonging to this surface with minimal cost for an Influenza A
model. Making structural assumptions on the mixing parameters, Poghotanyan, Feng, Glasser
and Hill derived  in \cite{poghotanyan_constrained_2018}  an analytical formula for the
solutions of Problem~\eqref{eq:informal_optim2}, for populations
divided in two groups. Many papers also
contain numerical studies of the optimization problems~\eqref{eq:informal_optim1}
and~\eqref{eq:informal_optim2} on real-world data using gradient techniques or similar
methods; see for example \cite{DistributionOfGoldst2010, feng_elaboration_2015,
TheMostEfficiDuijze2016, feng_evaluating_2017, IdentifyingOptZhao2019}.

\medskip

Finally, the effective reproduction number is not the only reasonable way of quantifying a
population's vulnerability to an infection. For an SIR infection for example, the
proportion of individuals that eventually catch (and recover from) the disease, often
referred to as the \emph{attack rate}, is broadly used. We refer
to~\cite{TheMostEfficiDuijze2016, DoseOptimalVaDuijze2018} for further discussion on this
topic.

\subsection{Main results}

\subsubsection{Costs and losses  in a SIS model}
The   differential  equations   governing  the   epidemic  dynamics   in
metapopulation~SIS  models were  developed by  Lajmanovich and  Yorke in
their     pioneer     paper~\cite{lajmanovich1976deterministic}.      In
\cite{delmas_infinite-dimensional_2020},   we   introduced   a   natural
generalization of this equation
to a possibly infinite space~$\Omega$,
where~$x  \in   \Omega$  represents   a  feature  and   the  probability
measure~$\mu(\mathrm{d} x)$  represents the  fraction of  the population
with  feature~$x$. We describe briefly and informally the key quantities
that we would like to minimize, and refer to
Section~\ref{sec:SIScostandloss} for details.

 A vaccination
strategy is represented by a function~$\eta:\Omega\to [0,1]$, where~$\eta(x)$ represents the
fraction of \textbf{non-vaccinated} individuals with feature~$x$.
In particular the strategy  $\eta=\un $ given by the
constant function equal to 1 corresponds to doing nothing;
similarly the constant strategy $\eta=\zero$ corresponds to 
vaccinating everybody.
The set of vaccination strategies
is denoted by $\DeltaSIS$. As mentioned before, to any vaccination  strategy we associate an epidemic dynamics on the
non-vaccinated individuals (that is, vaccinated individuals
are completely immunized and do not get infected nor infect others). For this effective dynamics,
we may compute two interesting  quantities.

The  first  is the  effective  reproduction  number $R_e(\eta)$  of  the
effective dynamics; a vaccination strategy should try to get this number
below  $1$.  If  the  effective reproduction  number $R_e(\eta)>1$,  the
generalization     of     the     SIS     dynamics     we     introduced
in~\cite{delmas_infinite-dimensional_2020},  just   like  its  classical
counterparts,  converges   to  a   non-trivial  equilibrium.    In  this
equilibrium a  certain number of people  are infected. The size  of this
equilibrium  infection under  the effective  dynamics defined  by $\eta$
will  be denoted  by $\I(\eta)$.   In  the SIS  model the  quantity~$\I$
appears as a natural analogue of the  attack rate in  the SIR model, and
is therefore a natural optimization objective.

Following~\cite{ddz-theory-topo}, we equip $\DeltaSIS$ with  the weak-* topology:
this choice makes $\DeltaSIS$ compact, and the loss functions $R_e$ and
$\I$ continuous.

To any vaccination strategy $\eta\in\DeltaSIS$ we also associate a
\emph{cost} $C(\eta)$, measuring the production and diffusion costs to
pay in order to deploy the strategy. Since the vaccination
strategy~$\eta$ represents the non-vaccinated population, the cost
function~$C$ should be non-increasing or decreasing (roughly
speaking~$\eta<\eta'$ implies $C(\eta)>C(\eta')$; see Definition
\ref{def:cont, monot}). We shall also assume that~$C$ is continuous
and that doing nothing costs nothing, that is,~$C(\un)=0$. A simple
and natural choice is the uniform cost~$\costu$ given by the overall
proportion of vaccinated individuals, see Remark~\ref{rem:costa-costu}
for comments on other examples of cost functions.

\subsubsection{Optimizing the protection of the population}

Our problem may now be seen as a bi-objective minimization problem: we wish to minimize
both the loss~$\loss(\eta)$ and the cost~$C(\eta)$, subject to $\eta \in \DeltaSIS$, with the
loss function~$\loss$ being either~$R_e$ or $\I$. Following classical terminology for
multi-objective optimisation problems \cite{NonlinearMultiMietti1998}, we call a
strategy~$\eta_\star$ \emph{Pareto optimal} if no other strategy is strictly better:
\[
  C(\eta)< C(\eta_\star) \implies \loss(\eta) > \loss(\eta_\star)
  \quad\text{and}\quad
  \loss(\eta)< \loss(\eta_\star) \implies C(\eta) > C(\eta_\star).
\]
The set of Pareto optimal strategies, which might be
  empty \emph{a priori}, will be denoted by~$\cp$.  We define the
\emph{Pareto frontier} as the set of Pareto optimal outcomes:
\[
  \mathcal{F} =
  \{ (C(\eta_\star),\loss(\eta_\star))\in \R^2 \, \colon \, \eta_\star \in
  \cp \}.
\]

For any strategy~$\eta$, the cost and loss of~$\eta$ vary between the following bounds:
\[
  \begin{alignedat}{3}
  0 &= C(\un) \leq C(\eta) &&\leq C(\zero)& &= \maxcost = \text{cost of vaccinating the whole population},\\
  0 &=\loss(\zero)\leq \loss(\eta) &&\leq \loss(\un)& &= \maxloss= \text{loss incurred in the absence of
 vaccination}.
  \end{alignedat}
  \]
Let~$\lossinf$ be the \emph{optimal loss} function and~$\Cinf$ the \emph{optimal} cost
function defined by:
\begin{align*}
  \lossinf(c) &=
  \inf \, \set{ \loss(\eta) \, \colon \, \eta \in \DeltaSIS, \, C(\eta) \leq c } \quad
  \text{for $c\in [0,\maxcost]$}, \\
  \Cinf(\ell) &=
  \inf \, \set{ C(\eta) \, \colon \, \eta \in \DeltaSIS,\, \loss(\eta) \leq \ell } \quad
  \text{for $\ell \in [0,\maxloss]$}.
\end{align*}

Our main result, illustrated in Fig.~\ref{fig:pareto_frontier} below,
states that the Pareto frontier is non empty and has a continuous
parametrization for both losses~$\loss=R_e$ and~$\loss=\I$.
  More
precisely, using results developed  in a more general
framework, see Section~\ref{sec:P-AntiP},  we get the following
result, see  Section~\ref{sec:summary} and
Tab.~\ref{tab:not-PAP} therein for a more complete picture.

\begin{theorem}[Properties of the Pareto frontier]
  \label{thm:1}
 Consider  the  SIS model   with the loss
 $\loss\in \{R_e, \I\}$ and the
 uniform cost. Under technical integrability assumptions, see Assumption~\ref{hyp:k-g},
  \begin{enumerate}[(i)]
   \item The
     function~$\Cinf$ is continuous and decreasing on~$[0, \maxloss]$,
     \item The
  function~$\lossinf$  is   continuous  on~$[0,   \maxcost]$  decreasing
  on~$[0,  \Cinf(0)]$ and  zero on~$[\Cinf(0),\maxcost]$.
  \item The Pareto frontier is compact, connected and:
  \[
    \mathcal{F} = \{(c,\lossinf(c)) \, \colon \, c \in [0,\Cinf(0)]\} =
    \{(\Cinf(\ell), \ell) \, \colon \, \ell \in [0,\maxloss]\} .
  \]
  \item The set of Pareto optima $\cp$ is compact
    in~$\DeltaSIS$.
  \end{enumerate}
\end{theorem}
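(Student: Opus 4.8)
The plan is to treat Theorem~\ref{thm:1} as an instance of the general bi-objective theory of Section~\ref{sec:P-AntiP}, so that the real work is to verify its hypotheses for the SIS model with uniform cost and then transcribe its conclusions. First I would record the structural facts that drive everything: the strategy set $\DeltaSIS$ is weak-$*$ compact (as in \cite{ddz-theory-topo}), the losses $R_e$ and $\I$ and the uniform cost $\costu$ are continuous, $\costu$ is decreasing with $\costu(\un)=0$ and $\costu(\zero)=\maxcost$, and both losses are non-decreasing for the pointwise order with $\loss(\zero)=0$ and $\loss(\un)=\maxloss$. Because the sublevel sets $\set{\eta : \costu(\eta)\le c}$ and $\set{\eta : \loss(\eta)\le \ell}$ are closed in a compact space and nonempty (they contain $\un$, resp.\ $\zero$), the infima defining $\lossinf$ and $\Cinf$ are attained; both value functions are then non-increasing because enlarging the budget enlarges the feasible set, and the boundary identities $\lossinf(\maxcost)=0$, $\Cinf(\maxloss)=0$, together with $\lossinf\equiv 0$ on $[\Cinf(0),\maxcost]$, are immediate from the definition of $\Cinf(0)$.

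Next I would establish continuity. Right-continuity of both functions is soft: for $\ell_n\downarrow \ell$, a weak-$*$ limit $\eta_\infty$ of a sequence of minimizers satisfies $\loss(\eta_\infty)\le \ell$ and $\costu(\eta_\infty)=\lim \Cinf(\ell_n)$ by continuity, forcing the limit to equal $\Cinf(\ell)$; the same argument handles $\lossinf$. The delicate point, and what I expect to be the main obstacle, is the opposite (left) direction: showing that tightening a constraint does not make the optimal value jump. I would prove this for $\lossinf$ first. Fix $c_0$ with minimizer $\eta^*$; if $\costu(\eta^*)<c_0$ then $\eta^*$ stays feasible for budgets slightly below $c_0$ and left-continuity is immediate, so the only issue is a tight budget $\costu(\eta^*)=c_0$. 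There I would use the pointwise-increasing path $\eta_s=\eta^*+s(\un-\eta^*)$, $s\in[0,1]$, along which $\loss$ is continuous and $\costu$ decreases continuously from $c_0$ to $0$; choosing $s$ with $\costu(\eta_s)=c<c_0$ gives a competitor, and letting $c\uparrow c_0$ forces $s\downarrow 0$ and hence $\loss(\eta_s)\to\loss(\eta^*)$. The one thing that could break this is a plateau of the cost at $s=0$, and this is exactly where the \emph{uniform} cost is essential: a genuine pointwise increase of $\eta^*$ on the positive-measure set $\set{\eta^*<1}$ strictly decreases $\costu$, so no cost plateau occurs. Continuity of $\Cinf$ then follows by the same scheme (now exploiting strict monotonicity of the losses, available for the SIS model) or, more cleanly, by inversion once strict monotonicity is in hand.

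I would then promote monotonicity to \emph{strict} monotonicity on the active ranges. For $\Cinf$ on $[0,\maxloss]$: given $\ell_1<\ell_2$ and a minimizer $\eta_1$ for $\ell_1$, pushing $\eta_1$ upward toward $\un$ spends the slack in the loss constraint while strictly lowering $\costu$, producing a feasible strategy for $\ell_2$ of strictly smaller cost. Symmetrically, $\lossinf$ is strictly decreasing on $[0,\Cinf(0)]$ and vanishes beyond. Strict monotonicity has two payoffs: it forces every minimizer for a budget $c\le\Cinf(0)$ to saturate the constraint, $\costu(\eta^*)=c$, so that $(c,\lossinf(c))$ is a genuine achievable outcome; and it makes $\Cinf\colon[0,\maxloss]\to[0,\Cinf(0)]$ a decreasing homeomorphism whose inverse is $\lossinf$ restricted to $[0,\Cinf(0)]$. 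This yields parts (i) and (ii) and the two equivalent parametrizations in (iii).

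Finally I would identify the Pareto frontier with this common graph. A short check shows that a strategy is Pareto optimal exactly when its outcome is not strictly dominated, that is, when $\loss(\eta)=\lossinf(C(\eta))$ with $C(\eta)\le\Cinf(0)$; strict monotonicity rules out any achievable point lying strictly below-left of the graph, and continuity guarantees that every graph point is attained. Hence $\mathcal{F}$ equals both displayed sets, and being the image of the compact interval $[0,\Cinf(0)]$ under the continuous map $c\mapsto(c,\lossinf(c))$ it is compact and connected, giving (iii). For (iv), Pareto optimality depends only on the outcome, so $\cp$ is the preimage of the closed set $\mathcal{F}$ under the continuous outcome map $\eta\mapsto(\costu(\eta),\loss(\eta))$; as a closed subset of the compact space $\DeltaSIS$ it is compact.
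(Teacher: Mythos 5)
Your overall architecture is sound and is essentially the paper's: verify compactness of $\DeltaSIS$, continuity and monotonicity of $\costu$, $R_e$, $\I$, deduce attainment and monotonicity of the value functions, rule out flat parts to get strict decrease and the inverse-bijection relation between $\Cinf$ and $\lossinf$, and then read off the frontier as a common graph (the paper routes this through Assumptions~\ref{hyp:cost}--\ref{hyp:loss} and Propositions~\ref{prop:f_properties}--\ref{prop:f_properties-2}, with Lemma~\ref{lem:c-dec+L-hom} doing the verification; your path arguments along $\eta+s(\un-\eta)$ and $(1-s)\eta$ are exactly the perturbations used there). Your treatment of the cost side is correct: the uniform cost is genuinely (strictly) decreasing, so pushing a minimizer toward $\un$ strictly lowers the cost, which gives both the left-continuity of $\lossinf$ and the strict decrease of $\Cinf$.

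The gap is on the loss side. Twice you invoke ``strict monotonicity of the losses, available for the SIS model'' --- for the left-continuity of $\Cinf$ and, via the word ``symmetrically'', for the strict decrease of $\lossinf$ on $[0,\Cinf(0)]$. Neither $R_e$ nor $\I$ is strictly monotone: $\I$ vanishes identically on a weak-* neighborhood of $\zero$ whenever $R_0>1$ (Remark~\ref{rem:local-max-I}), and $R_e$ is unchanged by modifications of $\eta$ outside the support of the kernel. So the symmetric argument, as stated, fails: decreasing $\eta$ pointwise need not strictly decrease the loss, and without a substitute $\lossinf$ could a priori have flat parts on $[0,\Cinf(0)]$ --- which is precisely the content of item (ii). The correct ingredient is the \emph{sub-homogeneity} of $R_e$ and $\I$ (Proposition~\ref{prop:R_e}): along the scaling path $\eta_s=(1-s)\eta$ one has $\loss(\eta_s)\leq (1-s)\loss(\eta)<\loss(\eta)$ whenever $\loss(\eta)>0$, while the cost moves continuously; this is exactly how the paper verifies Assumption~\ref{hyp:loss} in Lemma~\ref{lem:c-dec+L-hom} and then obtains the strict decrease of $\lossinf$ and the identity $\Cinf(\lossinf(c))=c$ in Proposition~\ref{prop:f_properties}. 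With ``strict monotonicity'' replaced by sub-homogeneity in those two places, the rest of your argument (constraint saturation, the homeomorphism $\Cinf\colon[0,\maxloss]\to[0,\Cinf(0)]$ with inverse $\lossinf$, the identification of $\F$ with the graph, and the compactness of $\cp$ as a closed preimage) goes through and matches the paper's conclusions.
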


In this  introduction we  focus, for readability's  sake, on  a concrete
setting.  Let us note that the  result stated above is a particular case
of  the  more general  statements  proved  below,  that we  now  briefly
discuss.  The  set of vaccinations, denoted  by $\Delta$, may be  a more
general      partially      ordered     topological      space      than
$\DeltaSIS =  \{\eta: \zero\leq \eta\leq  \un \}$; in particular  we may
consider  strict subsets  of~$\DeltaSIS$,  to model  constraints on  the
strategies  that  are  possible  to  deploy, see the examples in
Section~\ref{sec:vaccination-strat}
and Remark~\ref{rem:autre-DSIS}.   We  are  able  to  tackle
different,  non-uniform,  cost  functions,   and  other  loss  functions
provided they have enough regularity.

This  generality  allows  us  to  give also  results  on  the  ``worst''
strategies,  which  we  call  anti-Pareto optimal,  as  opposed  to  the
``best'', Pareto-optimal ones.  Surprisingly,  proving properties of the
anti-Pareto frontier sometimes necessitates stronger assumptions than in
the  Pareto   case.  For   example,  under   some  \emph{irreducibility}
assumption on  the kernel developed  in Section~\ref{sec:prop-L-atomic},
we establish the connectedness of the anti-Pareto frontier when the loss
is given  by $\loss=R_e$ (see also~\cite[Section~5]{ddz-cordon}  in this
direction); and a slightly different behavior  when the loss is given by
$\loss=\I$.

We  also  establish  that  the   set  of  outcomes  or  feasible  region
$\FF=\{  (C(\eta),\loss(\eta)), \,  \eta\in  \Delta\}$ has  no holes  in
Proposition~\ref{prop:trou};  and that  the  Pareto  frontier is  convex
if~$C$  and~$\loss$ are  convex  in Proposition~\ref{prop:cvex}.   Using
Proposition~\ref{prop:F-stab}  and results  from~\cite{ddz-theory-topo},
we can also  get the stability of  the Pareto frontier and  the set of
Pareto optima when the parameters of the SIS model vary.

\begin{remark}[Optimal critical  strategies do not depend on the
  loss]\label{rem:CR1=CI0}%
  

  Recall  a  vaccination  strategy   $\eta\in  \Delta$  is  critical  if
  $R_e(\eta)=1$.   Consider  the  uniform  cost  $C=\costu$,  and  write
  $\cp_\loss$ for the Pareto optima  associated to the loss $\loss$.  Of
  course,  there  is  no  reason  for $\cp_\I$  and  $\cp_{R_e}$  to  be
  comparable.  Nevertheless,  we have  the following result  on critical
  Pareto optima:
  \begin{equation}
    \label{eq:PJ-PRe}
    \eta\in \cp_{R_e}\text{ and } R_e(\eta)=1 \quad \Longleftrightarrow
    \quad \eta\in \cp_{\I}\text{ and } \I(\eta)=0.
  \end{equation}
  Indeed,  in \cite{delmas_infinite-dimensional_2020},  we proved  that,
  for all~$\eta\in\Delta$, the  equilibrium infection size~$\I(\eta)$ is
  non zero if and only  if~$R_e(\eta)>1$.  This implies that~$\cp_\I$ is
  a  subset   of~$\{  \eta\in  \Delta\,  \colon\,   R_e(\eta)\geq  1\}$,
  and~\eqref{eq:PJ-PRe} follows thanks to~Theorem~\ref{thm:1}.
  \end{remark}

\subsubsection{An illustrative example: the multipartite graph}
\label{sec:multipartite}

Let us illustrate some of our results on an example, which is 
discussed in details in the companion paper \cite{ddz-reg}.

  Graphs whose vertices  can be colored with~$\ell$ colors,  so that the
  endpoints   of  every   edge  are   colored  differently,   are  known
  as~$\ell$-partite graphs. In a biological setting, this corresponds to
  a population of~$\ell$  groups, such that individuals in  a group only
  contaminate individuals  of the other  groups.
     (For example, sexually
  transmitted  disease can be roughly approximated as an epidemic on a
  bipartite graph, see~\cite{mst}.)

Let us generalize and assume there is an infinity of
groups,~$\ell=\infty$, of respective size~$(2^{-n}, \, n\in \N^*)$,
and that there is a constant inter-group positive rate and that there
is no intra-group contamination; see Fig.~\ref{fig:pareto_frontier}. 
  
  \medskip
  
  Consider the loss~$\loss = R_e$ and the uniform cost~$C=\costu$ giving
  the  overall proportion  of vaccinated  individuals.  Building  on the
  results   of   \cite{esser_spectrum_1980,stevanovi},   we   prove   in
  \cite{ddz-reg} that vaccinating individuals with the highest number of
  connections (that is individuals in  the smallest groups) and stopping
  when all the  groups but the first one, are  fully vaccinated, are the
  Pareto  optimal  vaccinations.   In  particular,  since  the  cost  of
  vaccinating the  whole population but  the first group (whose  size is
  $1/2$ in  a total population  of size $1$)  has cost $1/2$,  we deduce
  that $\Cinf(0)=1/2$.

  In Fig~\ref{fig:pareto_frontier}, the corresponding Pareto
  frontier (\textit{i.e.}, the outcome of the ``best'' vaccination
  strategies) is drawn as the solid red line; the blue-colored zone
  corresponds to the feasible region, that is, all the possible values
  of~$(C(\eta), R_e(\eta))$ where~$\eta$ ranges over~$\DeltaSIS$; the
  dotted line corresponds to the outcome of the uniform vaccination
  strategy, that is $(C(\eta), R_e(\eta))=(c, (1-c) R_0)$ where~$c$
  ranges over~$[0, 1]$; and the red dashed curve corresponds to the
  anti-Pareto frontier (\textit{i.e.}, the outcome of the ``worst''
  vaccination strategies), which for this model correspond to the
  uniform vaccination of the nodes with the updated lower degree; see
  \cite{ddz-reg}.  The set of optimal Pareto vaccination strategies
  given by vaccinating the individuals with the highest number of
  connections gives a complete parametrization of the Pareto frontier.

  \begin{figure}
    \centering
    \begin{subfigure}[T]{.55\textwidth}
      \includegraphics{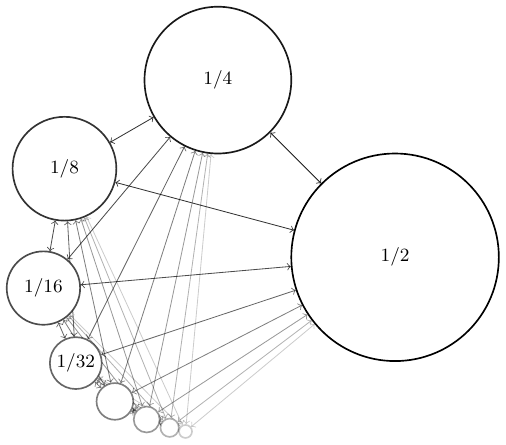}%
\caption{Partial  multipartite graph.}
\end{subfigure}%
\begin{subfigure}[T]{.45\textwidth} \centering
  \includestandalone[page=2]{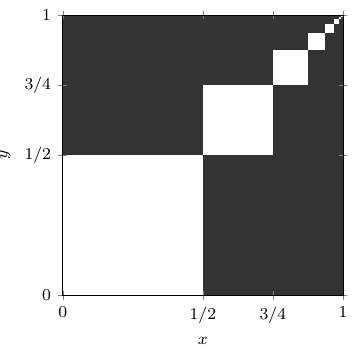}
  \caption{The Pareto frontier in solid red
    line compared  to the cost and  loss of the uniform  vaccinations in
    dotted line and the worst vaccination strategy in red dashed line.}
\end{subfigure}%
 \caption{Example
    of optimization with~$\loss = R_e$ for a multipartite graph with no
    intra-population transmission.}
 \label{fig:pareto_frontier}
\end{figure}

\subsection{On the companion papers}\label{sec:suite}

Let us discuss briefly the results that can be found in the companion
papers~\cite{ddz-theory-topo,ddz-theory-couplage}, and the related
papers \cite{ddz-Re,ddz-cordon,ddz-reg,ddz-hit,ddz-mono} which are
given in the framework developed here, and illustrate and complete in
various directions the present mathematical foundational work.

In the first  companion paper \cite{ddz-theory-topo}, we  prove that the
loss  functions  we  consider  for  the SIS  model  indeed  satisfy  the
continuity  assumptions  that  we  use here  for  proving  existence  of
optima.
  In  the second one, \cite{ddz-theory-couplage},  we describe how
two seemingly different feature spaces  may in fact describe essentially
the  same model;
in particular  we show  how discrete  models like  the
stochastic block models may be encoded in a continuous model, giving the
same Pareto and anti-Pareto frontiers.

\medskip

These core results are then used and extended in various directions.
The discussion of vaccination  control of gonorrhea
in~\cite[Section~4.5]{hethcote}  suggests  that  it  may  be  better  to
prioritize  the  vaccination of  people  that  have already  caught  the
disease:  this  leads  us  to  consider  in  \cite{ddz-hit}  a  critical
vaccination strategy guided by the equilibrium state. Note that it seems
difficult to give conditions for  this vaccination strategy to be Pareto
or anti-Pareto optimal.

In  \cite{ddz-Re}, motivated  by  a conjecture  formulated  by Hill  and
Longini                in                finite                dimension
(see~\cite[Conjecture~8.1]{hill-longini-2003}),   we   investigate   the
convexity and concavity of the effective reproduction function~$R_e$. In
\cite{ddz-cordon},   we  prove   for  the   loss  $\loss=R_e$   that:  a
disconnecting strategy is  better than the worst,  \textit{i.e.}, is not
anti-Pareto optimal; and give a graph interpretation of $\Cinf(0)$ which
is the minimal cost to ensure  that no infection occurs. We also provide
in  this setting  further results  on the  anti-Pareto frontier  using a
decomposition of the kernel on its irreducible components.

In \cite{ddz-reg},  we study the  Pareto and anti-Pareto frontier  for a
number of  examples.  In some  of them,  like the multipartite  graph of
Section~\ref{sec:multipartite},  the  optimal  vaccinations  target  the
individuals with  the highest number  of contacts. When  the individuals
have the same number of contacts, this heuristic cannot be used, and the
situations  may be  extremely varied:  for example,  we give  models for
which the uniform vaccination is Pareto optimal, or anti-Pareto optimal,
or not optimal for either problem.  We also provide an example where the
set of Pareto optima has a countable number of connected components (and
is  thus not  connected). This  implies  in particular  that the  greedy
algorithm is not optimal in this case.

\subsection{Structure of the paper}

We present the multi-objective optimization problem in
Section~\ref{sec:P-AntiP} in an~abstract setting, under general
conditions on the loss and cost functions and prove the first results
on the bi-objective problem and its links with the single-objective
problems obtained when fixing the value of either the cost or the
loss, and optimizing the other. This is completed in
Section~\ref{sec:diversCL} with miscellaneous properties of the Pareto
frontier.  Section~\ref{sec:k-SIS-model} is dedicated to the proper
presentation of the SIS setting, and the consequences of the general
results in this framework.

\section{Pareto and anti-Pareto frontiers}\label{sec:P-AntiP}

\subsection{The setting}\label{sec:cost-loss}

\subsubsection{Vaccination strategies}
\label{sec:vaccination-strat}

In the  SIS model  from~\cite{delmas_infinite-dimensional_2020} recalled
in Section~\ref{sec:k-SIS-model}, vaccination  strategies are encoded by
functions  from   the  trait  space   $\Omega$  to  $[0,1]$:   we  equip
$E =  L^\infty(\Omega)$ with  the weak-* topology  and with  the natural
partial     order $\leq $    on      real-valued     functions,     and     let
  $\Delta  = \DeltaSIS =  \{ \eta\in  E\, \colon\,   \zero\leq  \eta  \leq \un\}$,
with $\zero$  (resp.  $\un$)
  the constant  function equal  to zero  (resp. one).    We
generalize this  setting in this  section and  the next one,  keeping in
mind the SIS case as a guiding example.

  \medskip

  Let $(E, \co)$ be a Hausdorff topological vector space, and denote by
  $\zero$ its  zero element.
  Let $K\subset E$ be a salient pointed convex cone, that is,
  $\lambda K \subset K$ for all $\lambda\geq 0$, $K+K\subset K$ and
  $K \cap (-K)=\{\zero\}$.  Notice we don't assume that $K$ is
  closed. The cone $K$ defines a partial order $\preceq $ on $E$: for
  $\eta, \eta'\in E$, we set $\eta'\preceq \eta$ if $\eta-\eta'\in K$.
  We also write $\eta'\prec \eta$ if $\eta'\preceq \eta$ and
  $\eta'\neq \eta$.  For $\Delta\subset E$, we say that $\eta$ is
  \emph{the maximum} of $\Delta$ and write $\eta=\max \Delta$ if
  $\eta\in \Delta$ and $\eta'\preceq \eta$ holds for all
  $\eta'\in \Delta$; we also say that $\eta$ is \emph{the minimum} of
  $\Delta$ and write $\eta=\min \Delta$ if $\eta=-\max (-\Delta)$.

  From now  on, we shall  consider a subset  $\Delta \subset E$  with an
  element $\un \in \Delta$ such that:
  \begin{equation}
   \label{eq:D-prop}
   \Delta\quad\text{is convex, compact,}\quad   \zero=\min  \Delta
   \quad\text{and}\quad 
   \un=\max  \Delta.
  \end{equation}
  In particular, the set $\Delta$ is closed (as a compact subset of a
  Hausdorff space), and the latter two conditions of~\eqref{eq:D-prop}
  are equivalent to $\zero, \un \in \Delta$ and
  $\Delta \subset K \cap (\un -K)$.  Let us stress that writing
  $\un=\max \Delta$ implies that all elements of $\Delta$ are
  comparable with (and thus less than) $\un$.

  Naturally in the SIS model, the set  $\DeltaSIS $
  defined above 
  satisfies~\eqref{eq:D-prop}.
We now discuss three additional choices for $\Delta$,
  which  are all subsets of $\DeltaSIS$.
\begin{description}

\item[Almost uniform vaccinations]
  Let $\DeltaOsc$  denote the set of strategies $\eta\in\DeltaSIS$ that are ``almost uniform''
  in the sense that the oscillation
  $\sup  \eta -  \inf \eta  \leq    \delta$
  for some  given $\delta\in (0, 1)$, where  the extrema are
  understood as essential extrema. Restricting strategies to $\DeltaOsc$
  ensure that no subset of the population is too favored. 
\item[Blockwise strategies] Let $(\Omega_i)_{i\in I}$ be a measurable
  partition of $\Omega$ into sets of positive measure,
  and consider the set of corresponding
  blockwise constant vaccination strategies
  $\DeltaB=\{\eta\in \DeltaSIS\, \colon\, \eta \text{ constant on each }
  \Omega_i\}$.
  Restricting strategies to $\DeltaB$ is
  natural if one considers that public policies will typically focus
  on a finite set of identifiable subgroups and treat each subgroup
  uniformly.
    \item[Ordered blockwise  strategies] We elaborate on the previous
      example by adding another constraint on the possible strategies. 
      Assume that the index set~$I$ is totally ordered.  Consider the
      subset of the corresponding block wise constant vaccination
      strategies
      $\DeltaOrd=\{\eta\in \DeltaB\, \colon\, \eta_i\leq \eta_j\text{
        for } i\leq j\}$, where $\eta_i$ is the (constant) value of
      $\eta\in \DeltaOrd$ on $\Omega_i$. This corresponds to situations where some
      categories must be at least as vaccinated as others. 

    \end{description}
    The three sets $\DeltaOsc$, $\DeltaB$ and~$\DeltaOrd$ all satisfy~\eqref{eq:D-prop}.

\begin{remark}[Other properties of the various sets of vaccination strategies]
  \label{rem:prop-of-Delta}
Let us briefly discuss three additional properties that the set $\Delta$ may enjoy. 
\begin{enumerate}[(i)]
\item  $\Delta$ is \emph{saturated}  if whenever $\eta_1\preceq \eta_2\preceq \eta_3$,
  with $\eta_1$ and $\eta_3$  in $\Delta$, then $\eta_2\in \Delta$;
\item $\Delta$ \emph{generates} $K$ if  $K= \R_+ \Delta$;
\item $\Delta$ is \emph{symmetric w.r.t. $\un/2$} if  $\Delta=\un -\Delta$.
\end{enumerate}
The set $\DeltaSIS$ clearly satisfies these properties. 
Note that if \eqref{eq:D-prop} is satisfied, then $\Delta$ is saturated if
 and only if $\Delta = K\cap(\un - K)$, and is then necessarily symmetric.

\medskip

    The set  $\DeltaOsc$
  satisfies~\eqref{eq:D-prop}, it  generates  $K$  and
    $\DeltaOsc=\un -\DeltaOsc$.  However, it  is not saturated.
      
    \smallskip
    
    The set $\DeltaB=\un  -\DeltaB$ is symmetric.  However,  it is not
    saturated nor generates  $K$, unless the $\Omega_i$  are atoms.  One
    may be  tempted to consider an  order associated to a  smaller cone,
    say $\KB$, to  recover these properties: this also  enlarges the set
    of monotone functions, which may be  interesting in the light of the
    monotonicity   assumptions  below   on   the  cost   and  loss   (if
    $\KB\subset K$, monotone  functions with respect to  the order given
    by  $\KB$,  see  Definition~\ref{def:cont, monot}  below,  are  also
    monotone with respect to the order  given by $K$, the opposite being
    false in general).  To this end, let $\KB=\R_+ \DeltaB$ be the cone
    generated by  $\DeltaB$, and observe  then that, w.r.t.\  the order
    generated  by $\KB$,  $\un$  is  still larger  than  any element  of
    $\DeltaB$  (in  particular  $\DeltaB$  satisfies~\eqref{eq:D-prop}
    with  the order  generated by  $\KB$), and  that $\DeltaB$  is then
    saturated.

      \smallskip
      
      However,  this  trick  of  restricting  the  cone  is  not  always
      possible.   Indeed,  consider  the  set~$\DeltaOrd$  (that  is  not
      saturated and does not generate $K$) and assume there are at least
      two blocks.   Since $\DeltaOrd\cap  (\un -\DeltaOrd)$ is  reduced to
      the  constant vaccination  strategies, it  is a  strict subset  of
      $\DeltaOrd$,  that  is,  $\DeltaOrd\neq \un  -\DeltaOrd$.   Therefore
      $\DeltaOrd$  cannot  be  saturated  with  respect  to  a  cone  for
      which~\eqref{eq:D-prop} is satified.  For  example, if one defines
      $\KOrd=\R_+\DeltaOrd$,  then~\eqref{eq:D-prop}  fails, because  there
      are   $\eta\in\DeltaOrd$  such   that   $\un-\eta\notin  \KOrd$,   so
      $\eta\nprec \un$ for the order defined by~$\KOrd$.
\end{remark}

\subsubsection{Costs and losses}

We let $E^*$ denote the  topological dual  of $E$,  that is, the set
of continuous real valued linear functions defined on $E$, with the dual
pairing   $\langle   \cdot,   \cdot   \rangle$,  and   the   dual   cone
$K^*=\{x^*\in   E^*\,  \colon\,   \langle   x^*,   \eta  \rangle\geq   0
\text{ for all $ \eta\in K$}\}$.

\begin{definition}\label{def:cont, monot}
  We say that a real-valued function $H$ defined on $\Delta$ is:
  \begin{itemize}
    \item \textbf{\emph{Non-decreasing}}: if for any $\eta_1, \eta_2\in \Delta$ such that
      $\eta_1\preceq \eta_2$, we have $H(\eta_1)\leq H(\eta_2)$.
    \item       \textbf{\emph{Decreasing}}:       if       for       any
      $\eta_1, \eta_2\in \Delta$ such that $\eta_1\prec \eta_2$, we have
      $H(\eta_1)> H(\eta_2)$.
  \item \textbf{\emph{Sub-homogeneous}}: if $H(\lambda \eta)\leq \lambda H(\eta)$ for
      all $\eta\in \Delta$ and $\lambda\in [0, 1]$.
    \item \textbf{\emph{Affine}}: if there exists $c\in\mathbb{R}$ and $x^*\in E^*$ such that
  \(  H(\eta)= c - \langle x^*,\eta \rangle\). 
  \end{itemize}
\end{definition}

The definition  of non-increasing  function and increasing  function are
similar. We shall  study a bi-objective optimization problem  for a cost
function        $C$       and        loss       function        $\loss$,
see~\eqref{eq:multi-objective-minmax}  below,  under various  regularity
conditions. In particular the next assumption will always hold.

\begin{hyp}[On the cost function and loss function]\label{hyp:loss+cost}
  The loss function $\loss: \Delta \rightarrow \R$ is non-decreasing and continuous with
  $\loss(\zero)=0$. The cost function $C: \Delta\rightarrow
  \R$ is non-increasing and continuous with $C(\un) = 0$.
    The loss and
    cost functions are not degenerate, that is:
  \[
  \maxloss:=\max_\Delta \, \loss>0\quad\text{and}\quad \maxcost:=\max_\Delta \, C>0.
  \]
\end{hyp}
In particular, the loss
and the cost functions are non-negative and non-zero. Notice that
$\maxloss=\loss(\un)$ and $\maxcost=C(\zero)$.

  \begin{remark}[Affine cost functions]
    \label{rem:on-C-aff}
    Let $x^*\in K^*$  be such that $\langle x^*, \un  \rangle\neq 0$.
    The restriction to~$\Delta$ of the affine function defined    on   $E$   by
    $\eta  \mapsto  C(\eta)=\langle x^*,  \un  -\eta  \rangle$
    clearly satifies
        Assumption~\ref{hyp:loss+cost}.

        One may wonder conversely if any affine cost function satisfying
        Assumption~\ref{hyp:loss+cost}     may     be     written     as
        $\langle x^*,  \un - \eta \rangle$  for an $x^*\in K^*$.   It is
        indeed the case  in some situations, for example  if $\Delta$ is
        saturated and generates $K$, and  thus in particular for the SIS
        model with  $\DeltaSIS$.  This also holds  trivially if $\Omega$
        is finite  and it  also holds for  all the  examples $\Delta_i$,
        $i=1,2,3$    given     in    Section~\ref{sec:vaccination-strat}
        concerning  the SIS  model.  The  general case  boils down  to a
        question of continuous extension  of positive linear functionals
        and is not clear in the full generality of this section.
      \end{remark}

\subsection{Optimization problems}
\label{sec:opt-pb}

We will consider the multi-objective minimization and maximization problems:
\optProbminmax{eq:multi-objective-minmax}{(C(\eta),\loss(\eta))}{\eta
  \in \Delta}

Multi-objective  problems are  in a  sense ill-defined  because in  most
cases, it is impossible to find  a single solution that would be optimal
to all objectives simultaneously. Hence, we recall the concept of Pareto
optimality.  Since   the  minimization   problem  is  crucial   for  our
application to  vaccination, we shall  define Pareto optimality  for the
bi-objective minimization problem. A strategy $\eta_\star \in \Delta$ is
said  to  be  \emph{Pareto  optimal} for  the  minimization  problem  in
\eqref{eq:multi-objective-minmax}  if any  improvement of  one objective
leads to a deterioration of the other, for $\eta\in \Delta$:
\begin{equation}\label{eq:defParetoOptimal}
  C(\eta)<C(\eta_\star) \implies \loss(\eta) >\loss(\eta_\star)
  \quad\text{and}\quad
  \loss(\eta)<\loss(\eta_\star) \implies C(\eta) > C(\eta_\star).
\end{equation}

The set of
Pareto optimal strategies will be denoted  by~$\cp$,  and the Pareto frontier is
defined as the set of Pareto optimal outcomes:
\[
  \mathcal{F} = \{ (C(\eta),\loss(\eta))\,\colon\, \eta \text{ Pareto optimal}\}.
\]
Similarly,  a  strategy   $\eta^\star\in  \Delta$  is  \emph{anti-Pareto
  optimal} if  it is  Pareto optimal  for the  bi-objective maximization
problem  in  \eqref{eq:multi-objective-minmax}.   Intuitively,  for  the
vaccination  application, the  ``best'' vaccination  strategies are  the
Pareto  optima   and  the  ``worst''  vaccination   strategies  are  the
anti-Pareto optima.  We  denote similarly by~$\cpa$
the    set     of    anti-Pareto    optimal    strategies,     and    by
$\AF$ its frontier:
\[
  \AF = \{ (C(\eta), \loss(\eta))\,\colon\, \eta
  \text{ anti-Pareto optimal}\}.
\]
Finally, we define the \emph{feasible region} as the set of  all possible outcomes:
\[
  \FF =\{ (C(\eta),\loss(\eta)), \,\eta\in \Delta\}.
\]

We now consider the classical, single-objective minimization problems
related to the ``best''  strategies, with
a fixed loss~$\ell \in [0, \maxloss]$ or a fixed cost~$c\in [0, \maxcost]$:
\optProbTerm{eq:Prob1}{\loss(\eta)}{\eta\in\Delta, \,
  C(\eta)\leq c,}
 as well as
\optProbTerm{eq:Prob2}{C(\eta)}{\eta\in\Delta, \,
  \loss(\eta)\leq \ell.}
We denote the values of Problems~\eqref{eq:Prob1} and
\eqref{eq:Prob2} by:
\begin{align*}
  \lossinf(c)
&=\inf\{ \loss(\eta)\, \colon\, \eta\in \Delta \text{ and }
C(\eta) \leq c\} \quad\text{for $c\in [0, \maxcost]$},\\
\Cinf(\ell)
&=\inf\{ C(\eta)\, \colon\, \eta\in \Delta \text{ and } \loss(\eta) \leq
\ell\}\quad\text{for $\ell\in [0, \maxloss]$}.
\end{align*}

Similarly,  the maximization problem related to the ``worst''
 strategies
for a fixed loss~$\ell \in [0, \maxloss]$ or a fixed cost~$c\in [0,
\maxcost]$
are defined by: 
\optProbTerM{eq:Prob1**}{\loss(\eta)}{\eta\in\Delta, \,
C(\eta)\geq c,}
 as well as
\optProbTerM{eq:Prob2**}{C(\eta)}{\eta\in\Delta, \,
  \loss(\eta)\geq \ell.}

We denote the values of Problems~\eqref{eq:Prob1**} and
\eqref{eq:Prob2**} by:
\begin{align*}
  \lossup(c)
&=\sup\{ \loss(\eta)\, \colon\, \eta\in \Delta \text{ and }
C(\eta) \geq c\} \quad\text{for $c\in [0, \maxcost]$},\\
\Csup(\ell)
&=\sup\{ C(\eta)\, \colon\, \eta\in \Delta \text{ and } \loss(\eta) \geq
\ell\}\quad\text{for $\ell\in [0, \maxloss]$}.
\end{align*}

Under Assumption \ref{hyp:loss+cost}, as the loss and the cost functions
are  continuous  on   the  compact  set~$\Delta$,  the   infima  in  the
definitions of  the value functions  $\Cinf$ and $\lossinf$  are minima;
and the  suprema in the  definition of  the value functions  $\Csup$ and
$\lossup$  are  maxima.

\medskip

See Fig.~\ref{fig:typical_frontier} for  a typical  representation of
the possible aspects  of the feasible region~$\FF$ (in  light blue), the
value  functions and  the  Pareto and  anti-Pareto  frontiers under  the
general  Assumption~\ref{hyp:loss+cost}, and  the  connected Pareto  and
anti-Pareto  frontiers under  further regularity  on the  cost and  loss
functions  (see   Assumption~\ref{hyp:cost}-\ref{hyp:loss**}  below)  in
Fig.~\ref{fig:reg_frontier}.  In  Fig.~\ref{fig:pareto_frontier}, we
have plotted  in solid red  line the Pareto  frontier and in  dashed red
line the anti-Pareto frontier from Section~\ref{sec:multipartite}.

\subsubsection{Outline of the section}

It turns out that the anti-Pareto  optimization problem can be recast as
a Pareto optimization problem by  changing signs and exchanging the cost
and  loss functions.   In order  to  make use  of this  property in  our
application for vaccination  in SIS models, we study  the Pareto problem
under assumptions on  the loss $L$ that are general  enough to cover the
effective  reproduction  function $R_e$  and  the  fraction of  infected
individuals at equilibrium $\I$, and  assumptions on the cost that cover
the uniform or affine cost functions and $-\loss$.

\medskip

The main  result of this  section states that  all the solutions  of the
optimization  Problems~\eqref{eq:Prob1}  or~\eqref{eq:Prob2} are  Pareto
optimal,   and   gives   a    description   of   the   Pareto   frontier
$\mathcal{F}$  as  a  graph  in  Section  \ref{sec:Pareto-F},  and
similarly for  the anti-Pareto frontier in  Section \ref{sec:Pareto-AF}.
Surprisingly, those  two problems are  not completely symmetric  for the
loss   functions  $R_e$   and  $\I$   for  SIS   models  considered   in
Section~\ref{sec:prop-C+L},     see     Lemma     \ref{prop:L**}
,  where one  uses some  irreducibility condition  on the
kernels to study the anti-Pareto frontier.

\subsection{On the Pareto frontier}\label{sec:Pareto-F}

We first check that Problems \eqref{eq:Prob1} and \eqref{eq:Prob2} have solutions.

\begin{proposition}[Optimal solutions for fixed cost or fixed
  loss]\label{prop:main-result}
  Suppose  Assumption~\ref{hyp:loss+cost} holds. For any cost $c\in[0,\maxcost]$,
  there exists a minimizer of the loss under the cost constraint $C(\cdot)\leq c$, that
  is, a solution to Problem~\eqref{eq:Prob1}. Similarly, for any loss
  $\ell\in[0,\maxloss]$, there exists a minimizer of the cost under the loss constraint
  $\loss(\cdot) \leq \ell$, that is a solution to Problem~\eqref{eq:Prob2}.
\end{proposition}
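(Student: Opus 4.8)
The plan is to invoke the standard compactness/continuity argument, which is the natural route since all the necessary ingredients have already been set up in Assumption~\ref{hyp:loss+cost}.

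\medskip

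\noindent\textbf{Proof strategy.}
The key observation is that Problems~\eqref{eq:Prob1} and~\eqref{eq:Prob2} both amount to minimizing a continuous function over a subset of the compact set $\Delta$, so the argument reduces to checking that the feasible sets are nonempty and closed, and then applying the Weierstrass extreme value theorem.

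First I would treat Problem~\eqref{eq:Prob1}. Fix $c\in[0,\maxcost]$ and set
\[
  \Delta_c = \{\eta\in\Delta \,\colon\, C(\eta)\leq c\}.
\]
I would check that $\Delta_c$ is nonempty: since $C(\un)=0\leq c$ by Assumption~\ref{hyp:loss+cost}, we have $\un\in\Delta_c$. Next, $\Delta_c$ is closed: $C$ is continuous on $\Delta$, so $\Delta_c = C^{-1}((-\infty,c])$ is the preimage of a closed set under a continuous map, hence closed in $\Delta$; as a closed subset of the compact set $\Delta$, the set $\Delta_c$ is itself compact. The loss $\loss$ is continuous on $\Delta$, hence continuous on $\Delta_c$, and a continuous real-valued function on a nonempty compact set attains its minimum. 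Thus there exists $\eta\in\Delta_c$ with $\loss(\eta)=\lossinf(c)$, which is precisely a solution to Problem~\eqref{eq:Prob1}.

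\medskip

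The argument for Problem~\eqref{eq:Prob2} is entirely symmetric: for fixed $\ell\in[0,\maxloss]$ I would set $\Delta^\ell = \{\eta\in\Delta\,\colon\,\loss(\eta)\leq\ell\}$, observe that $\zero\in\Delta^\ell$ because $\loss(\zero)=0\leq\ell$, note that $\Delta^\ell=\loss^{-1}((-\infty,\ell])$ is closed in the compact $\Delta$ hence compact, and conclude that the continuous function $C$ attains its minimum $\Cinf(\ell)$ on this nonempty compact set. There is no serious obstacle here: the only points requiring care are the nonemptiness checks, which is exactly where the normalizations $C(\un)=0$ and $\loss(\zero)=0$ from Assumption~\ref{hyp:loss+cost} are used, and the verification that the sublevel sets are closed, which follows immediately from continuity of $C$ and $\loss$. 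In fact this is essentially the statement already anticipated in the remark following the definition of the value functions, namely that under Assumption~\ref{hyp:loss+cost} the infima defining $\Cinf$ and $\lossinf$ are attained; the proposition simply records this explicitly.
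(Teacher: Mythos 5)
Your proof is correct and follows exactly the same route as the paper's: nonemptiness of the sublevel set via the normalizations $C(\un)=0$ (resp.\ $\loss(\zero)=0$), compactness of that set as a closed subset of the compact $\Delta$, and the extreme value theorem for the continuous objective. The only difference is that you spell out the symmetric case for Problem~\eqref{eq:Prob2} explicitly, which the paper leaves to the reader.
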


\begin{proof}
  Let $c \in [0,\maxcost]$.  Since $C(\un)=0$, the set $\{\eta\in \Delta \,\colon\, C(\eta)\leq c\}$ is
  non-empty. It is also compact as $C$ is continuous on
  the compact set $\Delta$. Therefore, since the loss function
  $\loss$ is continuous, we get that $\loss$ restricted to this
  compact set reaches its minimum:  Problem~\eqref{eq:Prob1} has a solution. The
  proof is similar for the existence of a solution to Problem~\eqref{eq:Prob2}.
\end{proof}

  \begin{proposition}[Elementary properties of
    $\lossinf,\lossup,\Cinf,\Csup$]
    \label{prop:elementary}
    If   Assumption~\ref{hyp:loss+cost}   holds,  then   the   functions
    $\lossinf$  and $\Cinf$  are right-continuous,  while $\lossup$  and
    $\Csup$   are    left-continuous   ;   the   four    functions   are
    non-increasing.     We     also     have:
    \[
      \lossinf(\maxcost)=0,\quad
    \lossup(0)=\maxloss,\quad \Cinf(\maxloss)=0 \quad\text{and}\quad
    \Csup(0)=\maxcost.
    \]
  \end{proposition}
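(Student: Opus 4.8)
The plan is to establish the three assertions of the proposition—monotonicity, the four boundary values, and the one-sided continuity—separately, exploiting the symmetry between the definitions of the four value functions so that a single argument (and its mirror images) suffices.

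First I would record the monotonicity, which is purely set-theoretic. If $c_1 \le c_2$ then $\{\eta \in \Delta : C(\eta) \le c_1\} \subseteq \{\eta \in \Delta : C(\eta) \le c_2\}$, and taking an infimum over a larger set can only decrease it, so $\lossinf(c_2) \le \lossinf(c_1)$; the same nesting argument shows $\Cinf$ is non-increasing. For the sup-functions the inclusions reverse: $\{C \ge c_2\} \subseteq \{C \ge c_1\}$ when $c_1 \le c_2$, and a supremum over a larger set can only increase, so $\lossup$ and, symmetrically, $\Csup$ are non-increasing as well. The boundary values are equally direct: at $c = \maxcost$ the constraint $C \le \maxcost$ is vacuous, so the feasible set is all of $\Delta$ and $\lossinf(\maxcost) = \min_\Delta \loss = \loss(\zero) = 0$ (using that $\loss$ is non-decreasing with $\zero = \min\Delta$ and $\loss(\zero)=0$); likewise $\{C \ge 0\} = \Delta$ gives $\lossup(0) = \max_\Delta \loss = \maxloss$, $\{\loss \le \maxloss\} = \Delta$ gives $\Cinf(\maxloss) = \min_\Delta C = C(\un) = 0$, and $\{\loss \ge 0\} = \Delta$ gives $\Csup(0) = \max_\Delta C = \maxcost$.

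The substance of the proposition is the one-sided continuity, and here the key ingredients are the compactness of $\Delta$ and the continuity of $C$ and $\loss$. I would prove right-continuity of $\lossinf$ at a point $c \in [0, \maxcost)$ as follows. Monotonicity already yields that the right limit $\lossinf(c^+)$ exists and satisfies $\lossinf(c^+) \le \lossinf(c)$, so only the reverse inequality is needed. Pick $c_n \downarrow c$ and, using that the infima are attained (as recalled just before the statement), minimizers $\eta_n \in \Delta$ with $C(\eta_n) \le c_n$ and $\loss(\eta_n) = \lossinf(c_n)$. Since $\Delta$ is compact, the net $(\eta_n)$ admits a subnet converging to some $\eta^\star \in \Delta$; by continuity of $C$ and the fact that $C(\eta_n) \le c_n \to c$, one gets $C(\eta^\star) \le c$, so $\eta^\star$ is feasible at level $c$ and hence $\loss(\eta^\star) \ge \lossinf(c)$. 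On the other hand, continuity of $\loss$ gives that $\loss(\eta^\star)$ is the limit of $\loss(\eta_n) = \lossinf(c_n)$ along the subnet, which equals $\lossinf(c^+)$ because $\lossinf$ is monotone and $c_n \to c^+$. Combining, $\lossinf(c^+) = \loss(\eta^\star) \ge \lossinf(c)$, giving equality and hence right-continuity.

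The remaining three continuity claims follow by the same scheme with the obvious modifications: right-continuity of $\Cinf$ is literally the previous argument with the roles of $C$ and $\loss$ (and of $c$ and $\ell$) exchanged, while for the left-continuity of $\lossup$ and $\Csup$ one takes $c_n \uparrow c$ (resp.\ $\ell_n \uparrow \ell$), maximizers satisfying the reversed constraint $C(\eta_n) \ge c_n$, and uses that the inequality $\ge$ is preserved in the limit to conclude $C(\eta^\star) \ge c$. I expect the main obstacle to be technical rather than conceptual: because $\Delta$ is only assumed compact in a possibly non-metrizable Hausdorff topology, the extraction must be carried out with subnets rather than subsequences, and one must check that the cluster value of $\loss(\eta_n)$ genuinely coincides with the one-sided limit of the value function (this is exactly where monotonicity of $\lossinf$ is used). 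Conceptually, the reason only \emph{one-sided} continuity holds is the asymmetry of the level sets: $\bigcap_{c'>c}\{C \le c'\} = \{C \le c\}$ closes up nicely from the right, whereas approaching from the left only recovers $\{C < c\}$, which may be strictly smaller and produce a genuine jump.
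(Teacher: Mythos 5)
Your proof is correct and follows essentially the same strategy as the paper: monotonicity and the boundary values are read off from the definitions, and one-sided continuity is obtained by taking minimizers at levels $c_n\downarrow c$, extracting a convergent (sub)net by compactness, and passing to the limit using continuity of $C$ and $\loss$ together with the closedness of the constraint. The only difference is that the paper performs the extraction in the feasible region $\FF=(C,\loss)(\Delta)\subset\R^2$, which is compact and metrizable, so plain subsequences suffice, whereas you work directly in $\Delta$ and correctly note that subnets are needed there since $\Delta$ need not be metrizable.
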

  \begin{proof}
    We only consider $\lossinf$, the three other cases being similar.
    By definition, $\lossinf$ is non-increasing.
    It remains to check that it is right-continuous.
    Let $c_n$ be a decreasing sequence converging  to $c$, and
    let $\ell_n = \lossinf(c_n)\leq  \lossinf(c)$. By monotonicity, the sequence $\ell_n$
    is non-decreasing and converges to a limit $\ell_\infty \leq \lossinf(c)$. 
    To prove the reverse inequality, recalling that $\FF$
    denotes the set of all possible outcomes, we write:
    \[
      \lossinf(c) = \inf\{ L(\eta)\, \colon\,  \eta \in \Delta, C(\eta) \leq c\} 
                  = \inf\{ \ell\, \colon\,  (c',\ell) \in \FF, c'\leq c\}.
    \]
    Since $\FF$ is compact (as the  image of the compact set $\Delta$ by
    the continuous  map $(C,\loss)$),  and the  condition $c'\leq  c$ is
    closed,  the  infimum  is  a minimum.   By  definition  there  exist
    $(c'_n,\ell_n)$    such    that   $(c'_n,\ell_n)\in    \FF$,    with
    $c'_n  \leq  c_n$.  By  compactness,  along  a subsequence  we  have
    $(c'_n,\ell_n)    \to    (c'_\infty,\ell_\infty)\in   \FF$,    where
    $c'_\infty\leq                        c$,                        so:
    \(  \lossinf(c)  \leq   \lossinf(c'_\infty)  \leq  \ell_\infty.   \)
    Putting   the   inequalities   together    we   have   proved   that
    $\lossinf(c_n)$  converges   to  $\ell_\infty  =   \lossinf(c)$,  so
    $\lossinf$ is right-continuous.

      Eventually the equality $\lossinf(\maxcost)=0$ is obvious by
      definition of $\lossinf$. 
  \end{proof}

We start by a general result concerning the links between
the three optimization problems~\eqref{eq:multi-objective-minmax},
\eqref{eq:Prob1} and \eqref{eq:Prob2}. 

\begin{proposition}[Single-objective and bi-objective problems]\label{thm:single-bi}
  Suppose Assumption \ref{hyp:loss+cost} holds.
  \begin{enumerate}[(i)]
  \item\label{single-bi}  If   $\eta_\star$  is  Pareto   optimal,  then
    $\eta_\star$  is   a  solution  of~\eqref{eq:Prob1}  for   the  cost
    $c=C(\eta_\star)$, and  a solution of~\eqref{eq:Prob2} for  the loss
    $\ell=\loss(\eta_\star)$. Conversely, if  $\eta_\star$ is a solution
    to  both problems  \eqref{eq:Prob1}  and  \eqref{eq:Prob2} for  some
    values $c$ and $\ell$, then $\eta_\star$ is Pareto optimal.
    In particular, we have:
    \begin{equation}
      \label{eq:Pareto=}
      \cp=\left\{\eta\in \Delta\, \colon\, \loss(\eta)=\lossinf
        (C(\eta))\right\} 
      \,      \cap\, \left\{\eta\in \Delta\, \colon\, C(\eta)=\Cinf (\loss(\eta))\right\}.
\end{equation}

  \item\label{intersection} The Pareto frontier is the intersection of the graphs of
    $\Cinf$ and $\lossinf$: \[ \mathcal{F} = \{ (c,\ell)\in [0, \maxcost]\times [0,
    \maxloss]\, \colon\, c=\Cinf(\ell) \text{ and } \ell = \lossinf(c)\}. \]
  \item\label{special_points} The points $(0,\lossinf(0))$ and $(\Cinf(0),0)$ both belong
    to the Pareto frontier, and $\Cinf(\lossinf(0)) = \lossinf(\Cinf(0))=0$.
    Moreover, we also have $\Cinf(\ell) = 0$ for $\ell\in [\lossinf(0), \maxloss]$, and
    $\lossinf(c) = 0$ for $c\in [\Cinf(0), \maxcost]$.
  \end{enumerate}
\end{proposition}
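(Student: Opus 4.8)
The plan is to prove the three parts in order, feeding each into the next, and to use throughout that the infima defining $\lossinf$ and $\Cinf$ are attained (compactness of $\Delta$ and continuity, as in Proposition~\ref{prop:main-result}) together with the non-negativity of $\loss$ and $C$ from Assumption~\ref{hyp:loss+cost}. For part~(i), the forward direction is a contradiction argument against the definition~\eqref{eq:defParetoOptimal}. First I would set $c=C(\eta_\star)$ and suppose $\eta_\star$ did not solve~\eqref{eq:Prob1}: then some $\eta$ has $C(\eta)\leq c=C(\eta_\star)$ with $\loss(\eta)<\loss(\eta_\star)$, and the second implication in~\eqref{eq:defParetoOptimal} forces $C(\eta)>C(\eta_\star)$, a contradiction. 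Symmetrically, setting $\ell=\loss(\eta_\star)$ and using the first implication shows $\eta_\star$ solves~\eqref{eq:Prob2}. For the converse, I would assume $\eta_\star$ solves~\eqref{eq:Prob1} for some $c$ and~\eqref{eq:Prob2} for some $\ell$ (so $C(\eta_\star)\leq c$ and $\loss(\eta_\star)\leq\ell$) and check the two implications directly: if $C(\eta)<C(\eta_\star)$ but $\loss(\eta)\leq\loss(\eta_\star)\leq\ell$, then $\eta$ is admissible for~\eqref{eq:Prob2} and strictly beats $\eta_\star$ in cost, contradicting optimality; the other implication is dual through~\eqref{eq:Prob1}. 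The identity~\eqref{eq:Pareto=} then follows by reading both directions at $c=C(\eta_\star)$ and $\ell=\loss(\eta_\star)$, since $\loss(\eta_\star)=\lossinf(C(\eta_\star))$ is exactly the assertion that $\eta_\star$ solves~\eqref{eq:Prob1} at $c=C(\eta_\star)$, and dually for $\Cinf$.

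Part~(ii) is a translation of~\eqref{eq:Pareto=} into outcome space. The inclusion $\subseteq$ is immediate: for $\eta_\star\in\cp$, part~(i) gives that $(c,\ell)=(C(\eta_\star),\loss(\eta_\star))$ satisfies $\ell=\lossinf(c)$ and $c=\Cinf(\ell)$. For $\supseteq$, given $(c,\ell)$ with $\ell=\lossinf(c)$ and $c=\Cinf(\ell)$, I would take a minimizer $\eta_\star$ of~\eqref{eq:Prob1} at cost $c$, so $C(\eta_\star)\leq c$ and $\loss(\eta_\star)=\lossinf(c)=\ell$; the key extra step is to upgrade $C(\eta_\star)\leq c$ to equality, which comes from $\loss(\eta_\star)=\ell$ making $\eta_\star$ admissible for~\eqref{eq:Prob2}, whence $C(\eta_\star)\geq\Cinf(\ell)=c$. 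Then $\eta_\star$ solves both~\eqref{eq:Prob1} and~\eqref{eq:Prob2}, so $\eta_\star\in\cp$ by the converse in part~(i), and $(c,\ell)\in\mathcal{F}$.

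For part~(iii), I would first establish the boundary identities $\Cinf(\lossinf(0))=0$ and $\lossinf(\Cinf(0))=0$. A minimizer $\eta_0$ for $\lossinf(0)$ has $C(\eta_0)=0$ (as $C\geq 0$ and $C(\eta_0)\leq 0$) and $\loss(\eta_0)=\lossinf(0)$, hence $\eta_0$ is admissible for the problem defining $\Cinf(\lossinf(0))$ and forces that value to $0$; the identity $\lossinf(\Cinf(0))=0$ is symmetric. Membership of $(0,\lossinf(0))$ and $(\Cinf(0),0)$ in $\mathcal{F}$ is then read off from part~(ii). Finally, the vanishing of $\Cinf$ on $[\lossinf(0),\maxloss]$ and of $\lossinf$ on $[\Cinf(0),\maxcost]$ follows by sandwiching, since both functions are non-increasing (Proposition~\ref{prop:elementary}) and non-negative and already vanish at the respective left endpoints. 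No step is genuinely hard; the only point demanding care is the upgrade from $C(\eta_\star)\leq c$ to $C(\eta_\star)=c$ in part~(ii), where one must invoke the attainment of the infima and transfer admissibility between the two single-objective problems.
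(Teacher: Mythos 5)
Your proposal is correct and follows essentially the same route as the paper: contraposition/contradiction against the definition of Pareto optimality for part~(i), transfer of admissibility between the two single-objective problems to upgrade the inequality to an equality in part~(ii), and the boundary identities plus monotonicity of the value functions for part~(iii). The only (immaterial) differences are that you argue the converse of~(i) by direct contradiction where the paper first reduces to the cost $c=C(\eta_\star)$, and in~(ii) you start from a solution of~\eqref{eq:Prob1} where the paper starts from~\eqref{eq:Prob2}.
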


\begin{proof}
  Let us prove~\ref{single-bi}. If $\eta_\star$ is Pareto optimal, then for any strategy
  $\eta$, if $C(\eta)\leq C(\eta_\star)$ then $\loss(\eta)\geq \loss(\eta_\star)$ by
  taking the contraposition in~\eqref{eq:defParetoOptimal}, and $\eta_\star$ is indeed a
  solution of Problem~\eqref{eq:Prob1} with $c=C(\eta_\star)$. Similarly $\eta_\star$ is a
  solution of Problem~\eqref{eq:Prob2}.

  For  the  converse  statement,  let  $\eta_\star$  be  a  solution  of
  \eqref{eq:Prob1}   for   some   $c$  and   of   \eqref{eq:Prob2}   for
  some~$\ell$.  It   is  also   a  solution  of   \eqref{eq:Prob1}  with
  $c=C(\eta_\star)$. In  particular, we  get that for  $\eta\in \Delta$,
  $\loss(\eta)<         \loss(\eta_\star)$          implies         that
  $C(\eta)>c=C(\eta_\star)$,    which   is    the    second   part    of
  \eqref{eq:defParetoOptimal}.  Similarly, use  that  $\eta_\star$ is  a
  solution  to  \eqref{eq:Prob2},   to  get  that  the   first  part  of
  \eqref{eq:defParetoOptimal}   also    holds.    Thus,    the   strategy
  $\eta_\star$ is Pareto optimal.

  \medskip

  To prove Point~\ref{intersection}, we first prove that $
  \mathcal{F} $ is a subset of $\{ (c,\ell)\, \colon\, c=\Cinf(\ell) \text{ and } \ell =
  \lossinf(c)\}$.
  A point in $\mathcal{F}$ may be written as $(C(\eta_\star),
  \loss(\eta_\star))$ for some Pareto optimal strategy $\eta_\star$.
  By Point~\ref{single-bi}, $\eta_\star$ solves
  Problem~\eqref{eq:Prob1}
  for the cost $C(\eta_\star)$, so $\lossinf(C(\eta_\star)) =
  \loss(\eta_\star)$. Similarly, we have
  $\Cinf(\loss(\eta_\star)) =
  C(\eta_\star)$, as claimed.

  We now prove the reverse inclusion. Assume that $c=\Cinf(\ell)$ and $\ell= \lossinf(c)$,
  and consider $\eta$ a solution of Problem~\eqref{eq:Prob2} for the loss~$\ell$:
  $\loss(\eta) \leq \ell$ and $C(\eta) = \Cinf(\ell) = c$. Then $\eta$ is admissible for
  Problem~\eqref{eq:Prob1} with cost $c=\Cinf(\ell)$, so $\loss(\eta) \geq
  \lossinf(\Cinf(\ell)) = \lossinf(c) = \ell$. Therefore, we get $\loss(\eta) =
  \lossinf(c)$, and $\eta$ is also a solution of Problem~\eqref{eq:Prob1}. By
  Point~\ref{single-bi}, $\eta$ is Pareto optimal, so $(C(\eta),\loss(\eta)) = (c,\ell)\in
  \mathcal{F}$, and the reverse inclusion is proved. \medskip

  Finally     we    prove     Point~\ref{special_points}.    We     have
  $\Cinf(0)=\min  \{ C(  \eta)\, \colon\,  \eta\in \Delta  \text{ and  }
  \loss(\eta)=0  \}\in [0,  \maxcost]$. Let  $\eta\in \Delta$  such that
  $\loss(\eta)=0$    and    $C(\eta)=\Cinf(0)$.     We    deduce    that
  $\lossinf(\Cinf(0))\leq \loss(\eta)=0$ and thus $\lossinf(\Cinf(0))=0$
  as  $\loss$ is  non-negative. We  deduce from~\ref{intersection}  that
  $(\Cinf(0),  0)$  belongs  to  $   \mathcal{F}  $.  Since  $\lossinf$  is
  non-increasing,   we   also   get  that   $\lossinf=0$   on   $[\Cinf(0),
  \maxcost]$. The  other properties  of \ref{special_points}  are proved
  similarly.
\end{proof}

\begin{figure}
  \begin{subfigure}[T]{.5\textwidth}
    \centering
    \includestandalone[page=1]{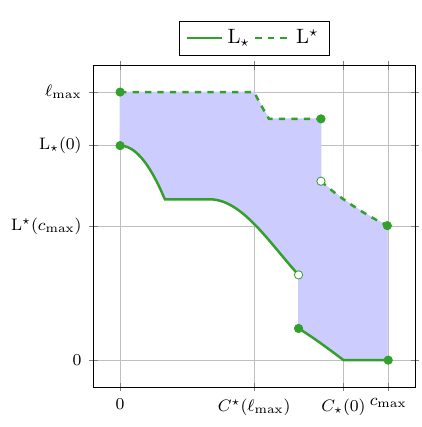}
    \caption{Value functions for Problems \eqref{eq:Prob1} and \eqref{eq:Prob1**}.}
    \label{fig:value_F}
  \end{subfigure}%
  \begin{subfigure}[T]{.5\textwidth} \centering
    \includestandalone[page=2]{frontiers_fig}
    \caption{Value functions for Problems \eqref{eq:Prob2} and \eqref{eq:Prob2**}.}\label{fig:frontier}
  \end{subfigure}
  \begin{subfigure}[T]{.5\textwidth}
    \centering
    \includestandalone[page=3]{frontiers_fig}
    \caption{Pareto and anti-Pareto frontier.}
  \end{subfigure}%
    \begin{subfigure}[T]{.5\textwidth}
      \centering
      \includestandalone[page=4]{frontiers_fig}
      \caption{Pareto and anti-Pareto frontier under additional regularity
      Assumptions~\ref{hyp:cost}-\ref{hyp:loss**}.}\label{fig:reg_frontier}
  \end{subfigure}
  \caption{An example of the possible aspects of the feasible region $\FF$ (in light blue), the value
 functions $\lossinf$, $\lossup$, $\Cinf$, $\Csup$, and the Pareto and anti-Pareto frontier (in red) under
 Assumption~\ref{hyp:loss+cost}.}
\label{fig:typical_frontier}
\end{figure}

The next two hypotheses on $C$ and $\loss$ will in particular help rule out ``flat parts''
in the Pareto frontier. 

\begin{hyp}\label{hyp:cost}
  Any local minimum of the cost function $C$ is global.
 \end{hyp}

\begin{hyp}\label{hyp:loss}
    Any local minimum of the loss function $\loss$ is global.
\end{hyp}

\begin{proposition}[Reduction to single-objective problems]
  \label{prop:f_properties}
  Under Assumptions \ref{hyp:loss+cost} and~\ref{hyp:cost}, the bi-objective problem
  may be reduced to cost optimization for a fixed loss; more precisely the following properties hold:
  \begin{enumerate}[(i)]
  \item\label{prop:c_star_decreases} The optimal cost $\Cinf$ is decreasing on $[0,
    \lossinf(0)]$.
  \item\label{prop:l_constraint_binding}
    For any $\ell\in [0,\lossinf(0)]$ there exist $\eta$ that
    solve Problem~\eqref{eq:Prob2}.
    For any such $\eta$,  $\loss(\eta) = \ell$ (that is, the constraint is binding). Moreover, $\eta$ is
    Pareto optimal, and:
    \begin{equation}\label{eq:LC=id}
      \lossinf(\Cinf(\ell)) = \ell.
    \end{equation}
    In particular, we have:
\begin{equation}
   \label{eq:Pareto=C} 
     \cp= \left\{\eta\in \Delta\, \colon\, C(\eta)=\Cinf (\loss(\eta))
       \quad\text{and}\quad
     \loss(\eta)\leq \lossinf(0)\right\}. 
    \end{equation}    
  \item\label{prop:frontier=graph_c_star} The Pareto frontier is the graph of $\Cinf$:
    \begin{equation}\label{eq:FL=L*}
      \mathcal{F}= \{(\Cinf(\ell), \ell) \, \colon \, \ell \in [0,\lossinf(0)]\}.
    \end{equation}
  \end{enumerate}
  Similarly, under Assumptions \ref{hyp:loss+cost} and~\ref{hyp:loss},
  the bi-objective problem may be reduced to loss optimization for a fixed cost, that is:
  \begin{enumerate}[(i),resume]
  \item\label{prop:l_star_decreases} The optimal loss $\lossinf$ is decreasing on $[0,
    \Cinf(0)]$.
  \item\label{prop:c_constraint_binding}
    For any $c\in[0,\Cinf(0)]$ there exist $\eta$ that solve Problem~\eqref{eq:Prob1}.
    For any such $\eta$,  $C(\eta) =c$. Moreover $\eta$ is Pareto
    optimal, and $\Cinf(\lossinf(c)) = c$.
      In particular, we have:
    \begin{equation}
      \label{eq:Pareto=L}
      \cp=\left\{\eta\in \Delta\, \colon\, \loss(\eta)=\lossinf
        (C(\eta)) \quad\text{and}\quad
        C(\eta) \leq  \Cinf(0)\right\}.
    \end{equation}

  \item\label{prop:frontier=graph_l_star} The Pareto frontier is the graph of $\lossinf$:
    \begin{equation}\label{eq:FL=C*} \mathcal{F} = \{(c, \lossinf(c)) \, \colon \, c
      \in [0,\Cinf(0)]\}.
    \end{equation}
  \end{enumerate}
\end{proposition}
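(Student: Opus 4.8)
The plan is to establish parts~\ref{prop:c_star_decreases}, \ref{prop:l_constraint_binding} and~\ref{prop:frontier=graph_c_star} under Assumptions~\ref{hyp:loss+cost} and~\ref{hyp:cost}, and to obtain~\ref{prop:l_star_decreases}, \ref{prop:c_constraint_binding} and~\ref{prop:frontier=graph_l_star} by the symmetry of the setting. This symmetry is genuine: $C$ and $\loss$ enter the statement interchangeably, each being continuous and non-negative on the compact set~$\Delta$ with minimal value~$0$ (attained at $\un=\max\Delta$ for~$C$ since $C(\un)=0$, and at $\zero=\min\Delta$ for~$\loss$ since $\loss(\zero)=0$), while $\Cinf$ and~$\lossinf$ are non-increasing by Proposition~\ref{prop:elementary}. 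The argument below uses only these features together with Proposition~\ref{thm:single-bi}, so exchanging $C\leftrightarrow\loss$, $\Cinf\leftrightarrow\lossinf$, Problems~\eqref{eq:Prob2}$\leftrightarrow$\eqref{eq:Prob1} and Assumptions~\ref{hyp:cost}$\leftrightarrow$\ref{hyp:loss} carries the first three claims onto the last three.

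First I would record an elementary fact used repeatedly: $\Cinf(\ell)>0$ for every $\ell\in[0,\lossinf(0))$. Indeed, $\Cinf(\ell)=0$ would provide $\eta$ with $C(\eta)=0$ and $\loss(\eta)\le\ell$; since $C\ge 0$, the definition of~$\lossinf(0)$ gives $\loss(\eta)\ge\lossinf(0)>\ell$, a contradiction. This requires neither Assumption~\ref{hyp:cost} nor~\ref{hyp:loss}.

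The heart of the matter is the binding-constraint claim of part~\ref{prop:l_constraint_binding}: for $\ell\in[0,\lossinf(0)]$, any minimizer~$\eta$ of~\eqref{eq:Prob2} (which exists by Proposition~\ref{prop:main-result}) satisfies $\loss(\eta)=\ell$. When $\ell=\lossinf(0)$ this is immediate, since $\Cinf(\lossinf(0))=0$ forces $C(\eta)=0$, whence $\loss(\eta)\ge\lossinf(0)$ by definition of~$\lossinf(0)$. When $\ell<\lossinf(0)$, I would argue by contradiction: if $\loss(\eta)<\ell$, then by continuity of~$\loss$ the set $\{\loss<\ell\}\cap\Delta$ is an open neighborhood of~$\eta$ made of points admissible for~\eqref{eq:Prob2}, on which $C\ge\Cinf(\ell)=C(\eta)$; thus $\eta$ is a \emph{local} minimum of~$C$. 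Assumption~\ref{hyp:cost} promotes it to a global one, so $C(\eta)=\min_\Delta C=0$, contradicting $\Cinf(\ell)>0$. This is the one place where Assumption~\ref{hyp:cost} is essential, and I expect it to be the main obstacle; the rest is bookkeeping with the relations already available.

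The remaining claims then follow. Part~\ref{prop:c_star_decreases}: if $\Cinf$ were constant on $[\ell_1,\ell_2]\subset[0,\lossinf(0)]$ with $\ell_1<\ell_2$, a minimizer~$\eta$ for~$\ell_1$ would also minimize~\eqref{eq:Prob2} at level~$\ell_2$ (as $C(\eta)=\Cinf(\ell_1)=\Cinf(\ell_2)$ and $\loss(\eta)=\ell_1\le\ell_2$), and the binding property at both levels would force $\ell_1=\loss(\eta)=\ell_2$. For the rest of part~\ref{prop:l_constraint_binding}, I would verify through Proposition~\ref{thm:single-bi}\ref{single-bi} that such~$\eta$ also solves~\eqref{eq:Prob1} at cost~$\Cinf(\ell)$: the bound $\lossinf(\Cinf(\ell))\le\ell$ is automatic, and a competitor $\eta'$ with $C(\eta')\le\Cinf(\ell)$ and $\loss(\eta')<\ell$ would yield $\Cinf(\loss(\eta'))\le\Cinf(\ell)$, contradicting that $\Cinf$ is now \emph{decreasing} on $[0,\lossinf(0)]$; this gives Pareto optimality and~\eqref{eq:LC=id}. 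Identity~\eqref{eq:Pareto=C} follows from~\eqref{eq:Pareto=}, using $\loss(\eta)=\lossinf(C(\eta))\le\lossinf(0)$ at Pareto points for one inclusion and the Pareto optimality of minimizers of~\eqref{eq:Prob2} for the other. Finally, part~\ref{prop:frontier=graph_c_star} combines these: the minimizers above give $\{(\Cinf(\ell),\ell)\colon\ell\in[0,\lossinf(0)]\}\subseteq\mathcal{F}$, and the reverse inclusion comes from Proposition~\ref{thm:single-bi}\ref{intersection} with $\ell=\lossinf(c)\le\lossinf(0)$.
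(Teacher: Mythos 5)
Your proof is correct and uses essentially the same mechanism as the paper: Assumption~\ref{hyp:cost} applied to a minimizer of Problem~\eqref{eq:Prob2} sitting inside an open set of the form $\{\loss<\ell\}$, combined with Proposition~\ref{thm:single-bi}. The only (harmless) difference is the order of deduction — you establish the binding property first by a direct local-minimum-implies-global contradiction and then derive that $\Cinf$ is decreasing, whereas the paper proves strict monotonicity of $\Cinf$ first and reads off the binding property from it.
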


\begin{proof}
  We prove \ref{prop:c_star_decreases}. Let $0\leq \ell<\ell' \leq \lossinf(0)$, and let
  $\eta_\star$ be a solution of Problem~\eqref{eq:Prob2}:
  \begin{equation}
    \label{eq:etaSolvesProb2}
    C(\eta_\star) = \Cinf(\ell) \quad\text{and}\quad \loss(\eta_\star)
    \leq \ell.
  \end{equation}
  The set $\mathcal{O} = \{\eta\, \colon\,  \loss(\eta) < \ell'\}$ is open and contains $\eta_\star$.
  Since $\loss(\eta_\star)<\lossinf(0)$, we get $C(\eta_\star)>0$, so $\eta_\star$ is not
  a global minimum for $C$. By Assumption~\ref{hyp:cost}, it cannot be a local minimum for
  $C$, so~$\mathcal{O}$ contains at least one point $\eta'$ for which
  $C(\eta')<C(\eta_\star)$. Since $\eta'\in \mathcal{O}$, we get $\loss(\eta')\leq \ell'$,
  so that $\Cinf(\ell') \leq C(\eta') < C(\eta_\star)= \Cinf(\ell)$. Since $\ell<\ell'$
  are arbitrary, $\Cinf$ is decreasing on $[0,\lossinf(0)]$. 

  We now prove~\ref{prop:l_constraint_binding}. If the inequality
  in~\eqref{eq:etaSolvesProb2} was strict, that is $\loss(\eta_\star)<\ell$, then we would
  get a contradiction as $C(\eta_\star) \geq \Cinf(\loss(\eta_\star)) > \Cinf(\ell) =
  C(\eta_\star)$. Therefore any solution $\eta_\star$ of \eqref{eq:Prob2} satisfies
  $\loss(\eta_\star) = \ell$, and in particular $\Cinf(\loss(\eta_\star))=\Cinf(\ell) =
  C(\eta_\star)$. This implies in turn that $\eta_\star$ also solves~\eqref{eq:Prob1}: if
  $\eta$ satisfies $\loss(\eta)<\loss(\eta_\star)$, then using the definition of $\Cinf$,
  the fact that it decreases, and the definition of $\eta_\star$, we get:
  \[
    C(\eta) \geq \Cinf(\loss(\eta)) > \Cinf(\loss(\eta_\star)) = C(\eta_\star).
  \]
  By contraposition, we have $\loss(\eta)\geq \loss(\eta_\star)$ for any $\eta$ such that
  $C(\eta)\leq C(\eta_\star)$, proving that $\eta_\star$ is also a solution
  of~\eqref{eq:Prob1} with $c=C(\eta_*)$. By Point~\ref{single-bi} of
  Proposition~\ref{thm:single-bi}, $\eta_\star$ is Pareto optimal. Therefore
  $(C(\eta_\star),\loss(\eta_\star)) = (\Cinf(\ell),\ell)$ belongs to the Pareto frontier.
  Using Point~\ref{intersection} of Proposition~\ref{thm:single-bi}, we deduce that
  $\ell=\lossinf(\Cinf(\ell))$.

  \medskip

  To  prove Point~\ref{prop:frontier=graph_c_star},  note that  Equation
  \eqref{eq:LC=id}     shows     that,    if     $c=\Cinf(\ell)$     for
  $\ell\in    [0,   \lossinf(0)]$,    then   $\ell=\lossinf(c)$.     Use
  Point~\ref{intersection}       and       \ref{special_points}       of
  Proposition~\ref{thm:single-bi},          to         get          that
  $\mathcal{F} =  \{ (c,\ell)\,\colon\,  c=\Cinf(\ell), \, \ell  \in [0,
  \lossinf(0)]\}$. \medskip

  The claims \ref{prop:l_star_decreases}, \ref{prop:c_constraint_binding} and
  \ref{prop:frontier=graph_l_star} are proved in the same way, exchanging the roles of
  $\loss$ and~$C$.
\end{proof}

Under both hypotheses, the picture becomes much nicer, see Fig.~\ref{fig:reg_frontier},
where the only flat parts of the graphs of $\Cinf$ and $\lossinf$
occur at zero loss or zero cost.

\begin{proposition}[When all problems are equivalent]
  \label{prop:f_properties-2}
  Under          Assumptions~\ref{hyp:loss+cost},         \ref{hyp:cost}
  and~\ref{hyp:loss},  the following properties hold:
  \begin{enumerate}[(i),resume]
  \item \label{item:inv-inf}
    The optimal loss $\lossinf$  is  a  continuous  decreasing
  bijection of $[0,\Cinf(0)]$ onto $[0,  \lossinf(0)]$ and $\Cinf$ is the
  inverse bijection. 

\item \label{it:prop-Fr} The Pareto frontier $\mathcal{F}$ is compact and
  connected.
  \item \label{it:P=K} The set of Pareto optimal strategies $\mathcal{P}$ is compact.
  \end{enumerate}
\end{proposition}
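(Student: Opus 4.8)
The plan is to bootstrap from the single-objective reductions already obtained in Proposition~\ref{prop:f_properties}, so that almost all the work reduces to elementary facts about monotone functions on intervals.

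For Point~\ref{item:inv-inf}, I would first observe that the two identities $\lossinf(\Cinf(\ell))=\ell$ for $\ell\in[0,\lossinf(0)]$ (Equation~\eqref{eq:LC=id}) and $\Cinf(\lossinf(c))=c$ for $c\in[0,\Cinf(0)]$ (Point~\ref{prop:c_constraint_binding}) exhibit $\lossinf$ and $\Cinf$ as mutual inverses, as set maps between $[0,\Cinf(0)]$ and $[0,\lossinf(0)]$; in particular $\lossinf$ is a bijection onto $[0,\lossinf(0)]$, the endpoints $\lossinf(0)$ and $\lossinf(\Cinf(0))=0$ being supplied by Point~\ref{special_points} of Proposition~\ref{thm:single-bi}. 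Strict monotonicity is already granted by Point~\ref{prop:l_star_decreases}. The one genuinely new input is continuity, for which I would invoke the elementary fact that a monotone map between real intervals whose image is again an interval cannot have jump discontinuities, hence is continuous. Since $\lossinf$ is decreasing and surjects onto the interval $[0,\lossinf(0)]$, it is continuous, and its inverse $\Cinf$ is continuous for the same reason.

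Part~\ref{it:prop-Fr} then follows essentially for free: by Point~\ref{prop:frontier=graph_l_star}, $\mathcal{F}$ is the graph $\{(c,\lossinf(c))\,\colon\, c\in[0,\Cinf(0)]\}$, that is, the image of the compact connected interval $[0,\Cinf(0)]$ under the continuous map $c\mapsto(c,\lossinf(c))$; since continuous images of compact connected sets are compact and connected, so is $\mathcal{F}$.

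For Point~\ref{it:P=K}, I would use the description~\eqref{eq:Pareto=L}, namely $\cp=\{\eta\in\Delta\,\colon\,\loss(\eta)=\lossinf(C(\eta))\text{ and }C(\eta)\leq\Cinf(0)\}$, and argue that $\cp$ is closed in the compact set $\Delta$. To apply this I first extend continuity of $\lossinf$ from $[0,\Cinf(0)]$ to all of $[0,\maxcost]$, using that $\lossinf\equiv0$ on $[\Cinf(0),\maxcost]$ by Point~\ref{special_points} and that the two pieces agree at $\Cinf(0)$, where $\lossinf(\Cinf(0))=0$. Then $\eta\mapsto\lossinf(C(\eta))$ is continuous on $\Delta$, so both $\{\eta\,\colon\,\loss(\eta)=\lossinf(C(\eta))\}$ and $\{\eta\,\colon\,C(\eta)\leq\Cinf(0)\}$ are closed; their intersection $\cp$ is closed, hence compact. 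The only subtlety worth flagging is the continuity step in~\ref{item:inv-inf}: Proposition~\ref{prop:elementary} a priori delivers only right-continuity of $\lossinf$, and the clean way to upgrade this to full continuity is precisely the monotone-bijection-onto-an-interval argument rather than a direct left-continuity estimate.
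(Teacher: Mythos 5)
Your proposal is correct and follows essentially the same route as the paper: continuity of $\lossinf$ is obtained from the fact that a decreasing function surjecting onto the interval $[0,\lossinf(0)]$ (via Proposition~\ref{prop:f_properties}~\ref{prop:l_constraint_binding} and~\eqref{eq:LC=id}) can have no jumps, the frontier is then the continuous image of a compact interval, and $\cp$ is compact as a closed subset of the compact $\Delta$. The only cosmetic difference is in the last step, where the paper writes $\cp=(C,\loss)^{-1}(\mathcal{F})$ with $\mathcal{F}$ closed instead of your equivalent use of~\eqref{eq:Pareto=L} with $\lossinf$ extended by zero on $[\Cinf(0),\maxcost]$.
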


\begin{proof}  
  We  first  check that  $\Cinf$  and  $\lossinf$ are  continuous  under
  Assumptions~\ref{hyp:loss+cost},~\ref{hyp:cost} and~\ref{hyp:loss}. We
  deduce                                                            from
  Proposition~\ref{prop:f_properties}~\ref{prop:l_constraint_binding}
  and Proposition~\ref{prop:main-result}  that $[0, \lossinf(0)]$  is in
  the range  of $\lossinf$. Since  $\lossinf $ is decreasing,  thanks to
  Proposition~\ref{prop:f_properties}~\ref{prop:l_star_decreases}    and
  $\lossinf(\Cinf(0))=0$,                                            see
  Proposition~\ref{thm:single-bi}~\ref{special_points},              the
  function~$\lossinf$     is     continuous    and     decreasing     on
  $[0,  \lossinf(0)]$,  and  thus  
one-to-one         from     $[0,\Cinf(0)]$      onto
  $[0,  \lossinf(0)]$.    Thanks  to~\eqref{eq:LC=id},   its  inverse
  bijection is the function~$\Cinf$.

  Since  the frontier  $\mathcal{F}$ is  given by~\eqref{eq:FL=C*},  and
  $\lossinf$ is  continuous, the  frontier $\mathcal{F}$ is  compact and
  connected.

  Since     $\mathcal{F}$       is      closed and 
  $\mathcal{P}=f^{-1}(\mathcal{F})$,  where  the function  $f=(C,\loss)$
  defined on $\Delta$  is continuous, we deduce that  $\mathcal{P}$ is a
  closed subset of $\Delta$ and thus compact as $\Delta$ is compact.
\end{proof}

Finally, let us check that Assumptions~\ref{hyp:cost} and~\ref{hyp:loss}
hold under very simple assumptions, which are in particular satisfied
for the SIS model with vaccination by
the cost  functions $\costu$ and  $\costa$ and the loss  functions $R_e$
and $\I$, see Section~\ref{sec:prop-C+L}. 

\begin{lemma}\label{lem:c-dec+L-hom}
  Suppose Assumption \ref{hyp:loss+cost} holds. If the cost function $C$
is convex or decreasing, then Assumption
  \ref{hyp:cost}  holds,   and  in   the  latter   case  we   also  have
  $\lossinf   (0)=\maxloss$.   If   the   loss   function   $\loss$   is
  sub-homogeneous, then Assumption \ref{hyp:loss} holds.
\end{lemma}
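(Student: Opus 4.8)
The plan is to treat the three implications separately, each via the same elementary mechanism: assuming a point $\eta_0\in\Delta$ is a local minimizer that is not global, I would exhibit a line segment inside $\Delta$ starting at $\eta_0$ along which the relevant objective strictly decreases, and let the segment parameter approach $\eta_0$ to contradict local minimality. Two facts make every such perturbation legitimate: $\Delta$ is convex and contains both $\zero$ and $\un$, so the segments stay in $\Delta$; and $E$ is a topological vector space, so the maps $t\mapsto \eta_0+t v$ are continuous and the perturbed points converge to $\eta_0$ in the topology of $E$. Since $\loss$ and $C$ are continuous, strict decrease along the segment near $\eta_0$ then contradicts minimality.

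For the \emph{convex cost}, I would use the classical convex-optimization argument. If $\eta_0$ is a local minimum and some $\eta_1\in\Delta$ satisfies $C(\eta_1)<C(\eta_0)$, then along $\eta_t=(1-t)\eta_0+t\eta_1\in\Delta$ convexity gives $C(\eta_t)\leq (1-t)C(\eta_0)+tC(\eta_1)<C(\eta_0)$ for every $t\in(0,1]$; letting $t\to 0^+$ contradicts local minimality. Hence any local minimum is global and Assumption~\ref{hyp:cost} holds.

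For the \emph{decreasing cost}, I would perturb towards the top $\un$ by setting $\eta_t=(1-t)\eta_0+t\un\in\Delta$. The one step requiring care is the strict comparison $\eta_0\prec\eta_t$: because $\un=\max\Delta$ we have $\un-\eta_0\in K$, so $\eta_t-\eta_0=t(\un-\eta_0)\in K$ by stability of $K$ under multiplication by $t\geq 0$, and this element is nonzero whenever $t>0$ and $\eta_0\neq\un$. Thus $\eta_0\prec\eta_t$, and since $C$ is decreasing, $C(\eta_t)<C(\eta_0)$; as above this forces any local minimizer to equal $\un$, which is therefore the unique global minimizer, giving Assumption~\ref{hyp:cost}. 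For the equality $\lossinf(0)=\maxloss$, I would note that $C$ is non-negative and vanishes only at $\un$ (any $\eta\prec\un$ has $C(\eta)>C(\un)=0$), so $\{\eta\in\Delta\,\colon\,C(\eta)\leq 0\}=\{\un\}$ and hence $\lossinf(0)=\loss(\un)=\maxloss$.

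For the \emph{sub-homogeneous loss}, I would shrink towards the bottom $\zero$ via $\lambda\eta_0$ for $\lambda\in[0,1]$, which lies in $\Delta$ since $\Delta$ is convex and contains $\zero$. Sub-homogeneity gives $\loss(\lambda\eta_0)\leq\lambda\loss(\eta_0)$, so if $\eta_0$ were a local minimum with $\loss(\eta_0)>0$, then for $\lambda<1$ we would have $\loss(\lambda\eta_0)\leq\lambda\loss(\eta_0)<\loss(\eta_0)$ while $\lambda\eta_0\to\eta_0$ as $\lambda\to 1^-$, a contradiction. Every local minimizer therefore satisfies $\loss=0=\min_\Delta\loss$ (attained at $\zero$ since $\loss(\zero)=0$ and $\loss\geq 0$), hence is global, and Assumption~\ref{hyp:loss} holds. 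The only genuinely delicate point in the whole argument is the strict order $\eta_0\prec\eta_t$ of the decreasing case, which combines $\un=\max\Delta$ with the cone axioms; all remaining steps are routine once continuity of the segment maps in the topology of $E$ is invoked.
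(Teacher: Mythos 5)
Your proof is correct and follows essentially the same route as the paper: one-parameter segment perturbations towards $\un$ (decreasing case), towards $\zero$ (sub-homogeneous case), and the verification that $\eta_0\prec\eta_0+t(\un-\eta_0)$ via $\un=\max\Delta$ and the cone axioms, together with the observation that a decreasing non-negative cost vanishes only at $\un$ to get $\lossinf(0)=\maxloss$. The only (cosmetic) divergence is in the convex case, where you run the standard "segment towards a strictly better point" argument, whereas the paper perturbs towards $\un$ and uses $C(\un)=0$ with non-negativity to conclude that any local minimum has cost $0$; both are valid, and yours does not need the normalization $C(\un)=0$.
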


\begin{proof}
  Let $\eta\in\Delta$ be a local minimum of $C$.  For $\varepsilon>0$
  small enough, we get $C(\eta+\varepsilon(\un-\eta)) \geq C(\eta)$
  using that
  $\eta+\varepsilon(\un-\eta) =(1-\varepsilon) \eta + \varepsilon \un$
  belongs to $ \Delta$ as $\Delta$ is convex.
  If $C$ is convex, using
  that $C(\un)=0$, we also get
  $C(\eta)\geq (1-\varepsilon) C(\eta)\geq C(\eta+\varepsilon(\un-\eta))$.
  Therefore all inequalities are equalities, so $C(\eta)=0$.
  We thus deduce that $\eta$ is
  a global minimum for $C$.  If $C$ is decreasing, using that
  $ \eta\preceq\eta+\varepsilon(\un-\eta) $, we get
  $C(\eta)\geq C(\eta+\varepsilon(\un-\eta))$, with the equality only
  if $\eta = \un$; we deduce that $\eta$ is a global minimum of $C$.
  So if the cost function $C$ is convex or decreasing, then Assumption~\ref{hyp:cost} holds. If $C$ is decreasing, as any local minimum is
  in fact equal to $\un$, this also gives $\lossinf(0)=\maxloss$.

  Similarly  if  $\loss$  has  a  local  minimum  at  $\eta$,  then  for
  $\varepsilon>0$         small          enough,         we         have
  $\loss(\eta)  \leq  \loss((1-\varepsilon) \eta)  \leq  (1-\varepsilon)
  \loss(\eta)$,  so $\loss(\eta)  = 0$,  where we  used that  $\loss$ is
  sub-homogeneous  for the  second  equality. Thus  $\eta$  is a  global
  minimum of $\loss$.
\end{proof}

\subsection{On the anti-Pareto frontier}\label{sec:Pareto-AF}

Letting $C'(\eta) = \maxloss - \loss(\eta)$ and $\loss'(\eta) = \maxcost -
C(\eta)$, it is easy to see that:
\[
 \Cinf'(c) =\maxloss - \lossup(\maxcost-c)
 \quad\text{and}\quad
 \lossinf'(\ell) = \maxcost - \Csup(\maxloss - \ell),
\]
so that Propositions~\ref{prop:main-result} and~\ref{thm:single-bi} may be applied to the cost function
$C'$ and the loss function $\loss'$ to yield
the following result.

\begin{proposition}[Single-objective and bi-objective problems ---  the anti
 Pareto case]
 \label{thm:single-bi-anti}
 Suppose Assumption \ref{hyp:loss+cost} holds.
\begin{enumerate}[(1)]
  \setcounter{enumi}{-1}
\item \label{it:exist-c}
      For any cost $c\in[0,\maxcost]$, there exists a
    maximizer of  the loss under  the cost constraint  $C(\cdot)\geq c$,
    that is,  a solution  to Problem~\eqref{eq:Prob1**}.  Similarly, for
    any loss $\ell\in[0,\maxloss]$, there exists a maximizer of the cost
    under  the  loss constraint  $\loss(\cdot)  \geq  \ell$, that  is  a
    solution to Problem~\eqref{eq:Prob2**}.
 \end{enumerate}
\begin{enumerate}[(i)]
 \item\label{single-bi-anti}
 If $\eta ^\star$ is anti-Pareto optimal, then $\eta^\star$ is a solution of~\eqref{eq:Prob1**}
 for the cost $c=C(\eta^\star)$, and a solution of~\eqref{eq:Prob2**} for the loss
 $\ell=\loss(\eta^\star)$.
 Conversely, if $\eta^\star$ is a solution to both problems
 \eqref{eq:Prob1**}
 and \eqref{eq:Prob2**} for some values $c$ and $\ell$,
 then $\eta^\star$ is anti-Pareto optimal.
   In particular, we have:
    \begin{equation}
      \label{eq:Pareto-anti=}
      \cpa=\left\{\eta\in \Delta\, \colon\, \loss(\eta)=\lossup (C(\eta))\right\}
\,      \cap\, \left\{\eta\in \Delta\, \colon\, C(\eta)=\Csup (\loss(\eta))\right\}.
    \end{equation}

\item\label{intersection-anti}
 The anti-Pareto frontier is the intersection of the graphs of $\Csup$
 and $\lossup$:
 \begin{equation}
   \label{eq:FL=L*-anti}
  \AF = \{ (c,\ell)\in [0, \maxcost]\times [0,
 \maxloss]\, \colon\, c=\Csup(\ell) \text{ and } \ell =
 \lossup(c)\}.
 \end{equation}

 \item\label{special_points-anti}
 The points $(\Csup(\maxloss),\maxloss)$ and $(\maxcost,\lossup(\maxcost))$ both belong
 to the anti-Pareto frontier, and we have
 $\Csup(\lossup(\maxcost)) =\maxcost$ and $\lossup(\Csup(\maxloss)) =
 \maxloss$.
 Moreover, we also have $\Csup(\ell) = \maxcost$ for $\ell\in
 [0, \lossup(\maxcost)]$,
 and $\lossup(c) = \maxloss$ for $c\in [0,\Csup(\maxloss)]$.
 \end{enumerate}
\end{proposition}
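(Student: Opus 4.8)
The plan is to reduce every assertion to the corresponding statement of the Pareto (minimization) case through the affine change of objectives announced just before the proposition. Set $C'(\eta)=\maxloss-\loss(\eta)$ and $\loss'(\eta)=\maxcost-C(\eta)$. First I would check that $(C',\loss')$ satisfies Assumption~\ref{hyp:loss+cost}: since $\loss$ is non-decreasing and continuous with $\loss(\zero)=0$, the function $C'$ is non-increasing and continuous with $C'(\un)=0$; symmetrically $\loss'$ is a non-decreasing continuous loss with $\loss'(\zero)=0$; and non-degeneracy holds as $\max_\Delta\loss'=\maxcost>0$ and $\max_\Delta C'=\maxloss>0$. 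Hence every result already proved for a cost/loss pair applies verbatim to $(C',\loss')$, with primed value functions $\Cinf',\lossinf',\Csup',\lossup'$ and primed Pareto objects $\cp',\mathcal{F}'$.

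Next I would record the dictionary between the primed minimization problems and the original maximization problems. Since $\loss'(\eta)\leq\ell'$ reads $C(\eta)\geq\maxcost-\ell'$ and $C'(\eta)\leq c'$ reads $\loss(\eta)\geq\maxloss-c'$, minimizing $\loss'$ under $C'(\cdot)\leq c'$ is exactly maximizing $C$ under $\loss(\cdot)\geq\maxloss-c'$, that is Problem~\eqref{eq:Prob2**} with $\ell=\maxloss-c'$; and minimizing $C'$ under $\loss'(\cdot)\leq\ell'$ is maximizing $\loss$ under $C(\cdot)\geq\maxcost-\ell'$, that is Problem~\eqref{eq:Prob1**} with $c=\maxcost-\ell'$. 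In particular a minimizer of $\loss'$ (resp.\ of $C'$) is a maximizer in Problem~\eqref{eq:Prob2**} (resp.\ in~\eqref{eq:Prob1**}), so Point~\ref{it:exist-c} follows from Proposition~\ref{prop:main-result} applied to $(C',\loss')$. The same substitution, taking a supremum instead of an infimum, gives the value-function identities $\Cinf'(c)=\maxloss-\lossup(\maxcost-c)$ and $\lossinf'(\ell)=\maxcost-\Csup(\maxloss-\ell)$ stated before the proposition.

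The heart of the argument is that $\eta^\star$ is anti-Pareto optimal for $(C,\loss)$ if and only if it is Pareto optimal for $(C',\loss')$. Indeed, the two implications defining anti-Pareto optimality (Pareto optimality for the maximization problem) are rewritten through the equivalences $C(\eta)>C(\eta^\star)\Leftrightarrow\loss'(\eta)<\loss'(\eta^\star)$ and $\loss(\eta)>\loss(\eta^\star)\Leftrightarrow C'(\eta)<C'(\eta^\star)$, which turns them into exactly the two implications of~\eqref{eq:defParetoOptimal} for $(C',\loss')$; hence $\cpa=\cp'$. With this identification, Point~\ref{single-bi-anti} is Point~\ref{single-bi} of Proposition~\ref{thm:single-bi} for $(C',\loss')$ read through the dictionary (a primed solution for $c'=C'(\eta^\star)$ corresponds to an original maximizer for $\ell=\loss(\eta^\star)$, and symmetrically), and translating the set equality~\eqref{eq:Pareto=} with the identities for $\Cinf',\lossinf'$ yields~\eqref{eq:Pareto-anti=}.

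Finally, Points~\ref{intersection-anti} and~\ref{special_points-anti} are obtained by pushing forward the corresponding parts of Proposition~\ref{thm:single-bi} along the bijection $(c,\ell)\mapsto(\maxloss-\ell,\maxcost-c)$, which carries the original outcome plane onto the primed one and sends $\AF$ to $\mathcal{F}'$. Substituting $c'=\maxloss-\ell$ and $\ell'=\maxcost-c$ into $c'=\Cinf'(\ell')$ and $\ell'=\lossinf'(c')$ and invoking the value-function identities gives $c=\Csup(\ell)$ and $\ell=\lossup(c)$, which is~\eqref{eq:FL=L*-anti}; the same map sends the primed special points $(0,\lossinf'(0))$ and $(\Cinf'(0),0)$ to $(\Csup(\maxloss),\maxloss)$ and $(\maxcost,\lossup(\maxcost))$, the primed identities $\Cinf'(\lossinf'(0))=\lossinf'(\Cinf'(0))=0$ to $\lossup(\Csup(\maxloss))=\maxloss$ and $\Csup(\lossup(\maxcost))=\maxcost$, and the primed flat-part statements to $\Csup(\ell)=\maxcost$ on $[0,\lossup(\maxcost)]$ and $\lossup(c)=\maxloss$ on $[0,\Csup(\maxloss)]$. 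I expect no genuine analytic difficulty: the single delicate point is the bookkeeping, namely keeping the sign flip and the cost/loss swap consistent throughout and matching each primed problem with the correct starred problem and transformed parameter.
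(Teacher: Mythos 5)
Your proposal is correct and takes exactly the paper's route: the paper's entire justification for Proposition~\ref{thm:single-bi-anti} is the sentence preceding it, which introduces $C'=\maxloss-\loss$, $\loss'=\maxcost-C$, records the identities $\Cinf'(c)=\maxloss-\lossup(\maxcost-c)$ and $\lossinf'(\ell)=\maxcost-\Csup(\maxloss-\ell)$, and then invokes Propositions~\ref{prop:main-result} and~\ref{thm:single-bi} for the primed pair. You have merely made explicit the bookkeeping the paper leaves to the reader --- verifying Assumption~\ref{hyp:loss+cost} for $(C',\loss')$, the identification $\cpa=\cp'$, and the change of variables $(c,\ell)\mapsto(\maxloss-\ell,\maxcost-c)$ on the outcome plane --- and all of it checks out.
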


The following additional hypotheses rule out the occurrence of flat
parts in the anti-Pareto frontier.
\begin{hyp}
 \label{hyp:cost**}
 Any local maximum of the cost function $C$ is global.
 \end{hyp}

\begin{hyp}\label{hyp:loss**}
    Any local maximum of the loss function $\loss$ is global.
\end{hyp}

The following result is now a consequence of
Proposition~\ref{prop:f_properties} and Proposition~\ref{prop:f_properties-2}
applied to the loss function $\loss'$ and cost function $C'$.

\begin{proposition}[Reduction to single-objective problems --- the anti-Pareto case]
  \label{prop:f_properties**}
  Under Assumption \ref{hyp:loss+cost} and~\ref{hyp:cost**} the following properties hold:
  \begin{enumerate}[(i)]
  \item\label{prop:c_star_decreases**} The optimal cost $\Csup$ is decreasing on
        $[\lossup(\maxcost), \maxloss]$.
  \item\label{prop:l_constraint_binding**} If $\eta$ solves Problem~\eqref{eq:Prob2**} for
    the loss $\ell\in[\lossup(\maxcost),\maxloss]$, then $\loss(\eta) = \ell$ (that is,
    the constraint is binding). Moreover $\eta$ is anti-Pareto optimal, and
    $\lossup(\Csup(\ell)) = \ell$. We have:
       \begin{equation}
      \label{eq:anti-P=C*}
      \cpa=\{ \eta\in \Delta\, \colon\, C(\eta)=\Csup(\loss(\eta)) 
      \quad\text{and}\quad
      \loss(\eta) \in [\lossup(\maxcost),\maxloss]\}.
    \end{equation}
  \item\label{prop:frontier=graph_c_star**} The anti-Pareto frontier is the graph of
    $\Csup$:
    \begin{equation}\label{eq:FL=L**} \AF = \{(\Csup(\ell),
	\ell) \, \colon \, \ell \in
      [\lossup(\maxcost),\maxloss]\}.
    \end{equation}
  \end{enumerate}
  Similarly, under Assumptions \ref{hyp:loss+cost} and~\ref{hyp:loss**}, the following
  properties hold:
  \begin{enumerate}[(i),resume]
  \item\label{prop:l_star_decreases**} The optimal loss $\lossup$ is decreasing on
     $[\Csup(\maxloss), \maxcost]$. 
  \item\label{prop:c_constraint_binding**} If $\eta$ solves Problem~\eqref{eq:Prob1**} for
    the cost $c\in[\Csup(\maxloss),\maxcost]$, then $C(\eta) =c$. Moreover $\eta$ is
    anti-Pareto optimal, and $\Csup(\lossup(c)) = c$. We have:
       \begin{equation}
      \label{eq:anti-P=L*-Re}
      \cpa=\{ \eta\in \Delta\, \colon\, \loss(\eta)=\lossup(C(\eta)) 
      \quad\text{and}\quad
      C(\eta) \in [\Csup(\maxloss),\maxcost]\}.
    \end{equation}

  \item\label{prop:frontier=graph_l_star**} The anti-Pareto frontier is the graph of
    $\lossup$:
    \begin{equation}\label{eq:FL=C**} \AF = \{(c,
      \lossup(c)) \, \colon \, c \in [\Csup(\maxloss),\maxcost]\}.
    \end{equation}
  \end{enumerate}
  Finally,  if the three Assumptions  \ref{hyp:loss+cost},  \ref{hyp:cost**}  and
  \ref{hyp:loss**}  hold,  then  $\lossup$ is  a  continuous  decreasing
  bijection from~$[\Csup(\maxloss),\maxcost]$         onto
  $ [\lossup(\maxcost),\maxloss]$, $\Csup$ is the inverse bijection, the
  anti-Pareto  frontier  $\AF$  is  compact  and
  connected,   and   the   set   of   anti-Pareto   optimal   strategies
  $\cpa$ is compact.
\end{proposition}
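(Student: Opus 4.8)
The plan is to deduce the whole statement from the sign-reflection introduced just above it, namely $C'(\eta) = \maxloss - \loss(\eta)$ and $\loss'(\eta) = \maxcost - C(\eta)$, by invoking Propositions~\ref{prop:f_properties} and~\ref{prop:f_properties-2} for the pair $(C', \loss')$. First I would record that $(C', \loss')$ satisfies Assumption~\ref{hyp:loss+cost}: since $\loss$ is non-decreasing and continuous with $\loss(\un) = \maxloss$, the function $C'$ is non-increasing and continuous with $C'(\un) = 0$ and $\max_\Delta C' = \maxloss - \min_\Delta \loss = \maxloss > 0$; symmetrically $\loss'$ is non-decreasing and continuous with $\loss'(\zero) = 0$ and $\max_\Delta \loss' = \maxcost > 0$. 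The value-function identities $\Cinf'(c) = \maxloss - \lossup(\maxcost - c)$ and $\lossinf'(\ell) = \maxcost - \Csup(\maxloss - \ell)$ stated before the proposition then follow from the definitions by a change of variable inside the infimum and supremum.

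The second step is to fix the dictionary between the two problems. Comparing the definition~\eqref{eq:defParetoOptimal} of Pareto optimality for $(C', \loss')$ with the definition of anti-Pareto optimality (Pareto optimality for the maximization problem), the sign flips turn a decrease of $C'$ into an increase of $\loss$ and a decrease of $\loss'$ into an increase of $C$; hence $\eta^\star$ is anti-Pareto optimal for $(C, \loss)$ if and only if it is Pareto optimal for $(C', \loss')$, so that $\cpa = \cp'$. At the level of regularity, a local maximum of $C$ is exactly a local minimum of $\loss' = \maxcost - C$, so Assumption~\ref{hyp:cost**} is precisely Assumption~\ref{hyp:loss} for the primed pair; likewise a local maximum of $\loss$ is a local minimum of $C' = \maxloss - \loss$, so Assumption~\ref{hyp:loss**} is Assumption~\ref{hyp:cost} for the primed pair. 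The crucial bookkeeping point is that the roles cross: the cost-constrained problem for $(C', \loss')$ is the loss-constrained problem for $(C, \loss)$, and vice versa.

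The third step is then a mechanical translation. Applying the loss-optimization half of Proposition~\ref{prop:f_properties} (items~\ref{prop:l_star_decreases}--\ref{prop:frontier=graph_l_star}) to $(C', \loss')$, which is legitimate under Assumption~\ref{hyp:cost**}, and feeding in $\Cinf'(0) = \maxloss - \lossup(\maxcost)$, I would obtain items~\ref{prop:c_star_decreases**}--\ref{prop:frontier=graph_c_star**}: for instance "$\lossinf'$ decreasing on $[0, \Cinf'(0)]$" reflects to "$\Csup$ decreasing on $[\lossup(\maxcost), \maxloss]$", while the characterization~\eqref{eq:Pareto=L} of $\cp'$ becomes~\eqref{eq:anti-P=C*} after substituting the value-function identities. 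Symmetrically, the cost-optimization half of Proposition~\ref{prop:f_properties} (items~\ref{prop:c_star_decreases}--\ref{prop:frontier=graph_c_star}), valid under Assumption~\ref{hyp:loss**} and using $\lossinf'(0) = \maxcost - \Csup(\maxloss)$, yields items~\ref{prop:l_star_decreases**}--\ref{prop:frontier=graph_l_star**}, with~\eqref{eq:Pareto=C} turning into~\eqref{eq:anti-P=L*-Re}. Throughout, $\AF$ is the image of the Pareto frontier of $(C', \loss')$ under the affine reflection $(c', \ell') \mapsto (\maxcost - \ell', \maxloss - c')$, which is a homeomorphism and therefore transports compactness and connectedness. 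For the concluding combined statement, applying Proposition~\ref{prop:f_properties-2} to $(C', \loss')$ gives that $\lossinf'$ is a continuous decreasing bijection with inverse $\Cinf'$, that its frontier is compact and connected, and that $\cp'$ is compact; translating through the reflection and through $\cpa = \cp'$ delivers the bijection between $\lossup$ and $\Csup$, the compactness and connectedness of $\AF$, and the compactness of $\cpa$.

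The main obstacle is not conceptual but purely a matter of keeping the correspondence straight: one must make sure the two regularity assumptions swap correctly, that the constrained cost problem for the primed pair is read off as the constrained loss problem for the original pair, and that every interval endpoint reflects as expected (for example $[0, \lossinf'(0)]$ must become $[\Csup(\maxloss), \maxcost]$). To guard against a dropped sign or a swapped role, I would verify each endpoint and, in particular, re-derive the two characterizations~\eqref{eq:anti-P=C*} and~\eqref{eq:anti-P=L*-Re} by direct substitution of the identities $C'(\eta) = \maxloss - \loss(\eta)$, $\loss'(\eta) = \maxcost - C(\eta)$, $\Cinf'(\loss'(\eta)) = \maxloss - \lossup(C(\eta))$ and $\lossinf'(C'(\eta)) = \maxcost - \Csup(\loss(\eta))$, checking that the constraints $C'(\eta) \le \Cinf'(0)$ and $\loss'(\eta) \le \lossinf'(0)$ reduce exactly to $\loss(\eta) \in [\lossup(\maxcost), \maxloss]$ and $C(\eta) \in [\Csup(\maxloss), \maxcost]$ respectively.
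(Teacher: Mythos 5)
Your proposal is correct and is exactly the paper's intended argument: the paper gives no separate proof, only the remark that the result follows by applying Propositions~\ref{prop:f_properties} and~\ref{prop:f_properties-2} to the reflected pair $(C',\loss')$, and your write-up supplies precisely that reduction with the assumption dictionary (\ref{hyp:cost**}~$\leftrightarrow$~\ref{hyp:loss} and \ref{hyp:loss**}~$\leftrightarrow$~\ref{hyp:cost} for the primed pair) and the interval/endpoint translations all handled correctly.
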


The following result is similar to the first part of Lemma \ref{lem:c-dec+L-hom}.

\begin{lemma}\label{lem:c-dec+L-hom**}
  Suppose the part of Assumption~\ref{hyp:loss+cost} on the cost
  function holds.  If the cost function $C$
  is concave
  or decreasing, then Assumption~\ref{hyp:cost**}  holds,   and  in   the  latter   case  we   also  have
  $\lossup (\maxcost)=0$.
\end{lemma}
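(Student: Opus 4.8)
The plan is to mirror the proof of the first part of Lemma~\ref{lem:c-dec+L-hom}, replacing the perturbation toward $\un$ by a perturbation toward $\zero$, since $\zero$ is now the extreme point of $\Delta$ at which $C$ attains its maximum $\maxcost=C(\zero)$. Concretely, let $\eta\in\Delta$ be a local maximum of $C$. Because $\Delta$ is convex and $\zero,\eta\in\Delta$, the point $(1-\varepsilon)\eta=(1-\varepsilon)\eta+\varepsilon\zero$ lies in $\Delta$ for every $\varepsilon\in[0,1]$; and since scalar multiplication is continuous in the topological vector space $(E,\co)$, we have $(1-\varepsilon)\eta\to\eta$ as $\varepsilon\to0$. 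Hence for $\varepsilon>0$ small enough local maximality yields $C((1-\varepsilon)\eta)\leq C(\eta)$. This inequality is the common starting point for both cases, and it uses only structural properties of $\Delta$ together with the cost part of Assumption~\ref{hyp:loss+cost} (through $\maxcost=C(\zero)$).

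For the concave case, I would combine the above with the concavity bound $C((1-\varepsilon)\eta)\geq(1-\varepsilon)C(\eta)+\varepsilon C(\zero)=(1-\varepsilon)C(\eta)+\varepsilon\maxcost$, obtained from the convex combination $(1-\varepsilon)\eta+\varepsilon\zero$. Putting the two inequalities together gives $(1-\varepsilon)C(\eta)+\varepsilon\maxcost\leq C(\eta)$, that is $\varepsilon\maxcost\leq\varepsilon C(\eta)$, whence $\maxcost\leq C(\eta)$. As $C(\eta)\leq\maxcost$ always holds, we conclude $C(\eta)=\maxcost$, so $\eta$ is a global maximum; this is exactly Assumption~\ref{hyp:cost**}.

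For the decreasing case, I would instead use monotonicity: since $\zero=\min\Delta$ we have $\eta\in K$, so $\eta-(1-\varepsilon)\eta=\varepsilon\eta\in K$, i.e. $(1-\varepsilon)\eta\preceq\eta$, with strict inequality $(1-\varepsilon)\eta\prec\eta$ whenever $\eta\neq\zero$. If $\eta\neq\zero$, decreasingness forces $C((1-\varepsilon)\eta)>C(\eta)$, contradicting local maximality; hence every local maximum must equal $\zero$, the unique global maximum, and Assumption~\ref{hyp:cost**} holds. For the final claim $\lossup(\maxcost)=0$, I would observe that $\maxcost=\max_\Delta C=C(\zero)$ together with decreasingness of $C$ gives $\{\eta\in\Delta\,\colon\,C(\eta)\geq\maxcost\}=\{\eta\in\Delta\,\colon\,C(\eta)=\maxcost\}=\{\zero\}$; by definition of $\lossup$ and $\loss(\zero)=0$, this yields $\lossup(\maxcost)=\loss(\zero)=0$.

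The argument is elementary and I do not anticipate a genuine obstacle; the only points requiring care are the direction of the perturbation (toward $\zero$, the maximizer of $C$, rather than toward $\un$) and the use of continuity of scalar multiplication to ensure $(1-\varepsilon)\eta$ enters any prescribed neighborhood of $\eta$. As an alternative one could deduce the concave case from Lemma~\ref{lem:c-dec+L-hom} through the duality substitution $C'(\eta)=\maxloss-\loss(\eta)$, $\loss'(\eta)=\maxcost-C(\eta)$ used to derive Proposition~\ref{prop:f_properties**}, under which concavity of $C$ translates into sub-homogeneity of $\loss'$; however the decreasing case and the identity $\lossup(\maxcost)=0$ are handled more transparently by the direct computation above.
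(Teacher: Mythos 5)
Your proposal is correct and follows essentially the same route as the paper's proof: perturb a local maximum $\eta$ toward $\zero$ via $(1-\varepsilon)\eta$, combine local maximality with the concavity bound $C((1-\varepsilon)\eta)\geq(1-\varepsilon)C(\eta)+\varepsilon\maxcost$ in the concave case, and use $(1-\varepsilon)\eta\prec\eta$ for $\eta\neq\zero$ together with decreasingness in the other case, concluding $\lossup(\maxcost)=\loss(\zero)=0$ exactly as the paper does. The extra details you supply (membership of $(1-\varepsilon)\eta$ in $\Delta$, continuity of scalar multiplication) are correct but implicit in the paper's argument.
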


\begin{proof}
  Let  $\eta\in\Delta$  be a local maximum of $C$  and  $\varepsilon\in
  (0,1)$ small enough so that $C(\eta)\geq  C((1-\varepsilon)\eta)$.
If $C$ is concave, we also get $C((1-\varepsilon)\eta) \geq (1-\varepsilon)
C(\eta)+ \varepsilon C(\zero)= (1-\varepsilon)
C(\eta)+ \varepsilon \maxcost$. We deduce that $C(\eta)=\maxcost$ and thus
$\eta$ is a global maximum.

If    $C$  is
  decreasing,  we get $C((1-\varepsilon)\eta) \geq  C(\eta)$, with  equality if
  and only if $\eta=\zero$. Therefore the only local
  maximum of  $C$ is  $\eta=\zero$, and  it is  a global  maximum. Since
  $C(\eta)=\maxcost$ implies that  $\eta=\zero$,
  we also get that $\lossup(\maxcost)=\loss(\zero)=0$.
\end{proof}

In  the  SIS  model,  the  loss function  $\loss=\I$  does  not  satisfy
Assumption~\ref{hyp:loss**}  in  general  because   of  its  zeros,  see
Remark~\ref{rem:local-max-I},  but  it  satisfies the  following  weaker
condition.

  \begin{chyp}{\ref{hyp:loss**}'}
    \label{hyp:loss=I}
     If $\eta$ is a local maximum of  $\loss$ such that
     $\loss(\eta)>0$, then it is a global  maximum.

 \end{chyp}

Under this assumption, the function $\Csup$ might be discontinuous at
$0$. We set:
\begin{equation}
   \label{eq:def-c+}
  \cplus=\Csup(0+) = \lim_{\substack{\ell \to 0\\ \ell >0}} \Csup(\ell).
\end{equation}
Considering Assumption~\ref{hyp:loss=I} with $\cplus<\maxcost$ instead
of Assumption~\ref{hyp:loss**} impacts only
items~\ref{prop:l_star_decreases**}-\ref{prop:frontier=graph_l_star}
and the conclusion of Proposition~\ref{prop:f_properties**} as
follows; we refer to Fig.~\ref{fig:monatomic} for an illustration,
and leave the proof to the reader.

\begin{cprop}{\ref{prop:f_properties**}'}[The anti-Pareto case, weaker version]
  \label{prop:f_properties=I}
  Suppose Assumptions \ref{hyp:loss+cost}  and~\ref{hyp:loss=I} hold.
\begin{enumerate}[(a)]
\item\label{it:a}
  If
  $\cplus=\maxcost$,                                                then
  items~\ref{prop:l_star_decreases**}-\ref{prop:frontier=graph_l_star}
  and the conclusion of Proposition~\ref{prop:f_properties**} hold.
\item \label{it:b}
  If $\cplus<\maxcost$, the following properties hold:
    \begin{enumerate}[(i),start=4]
   \item\label{prop:l_star_decreases=I} The optimal loss $\lossup$ is decreasing on
     $[\Csup(\maxloss), \cplus]$ and zero on $(\cplus, \maxcost]$.
  \item\label{prop:c_constraint_binding=I} If $\eta$ solves Problem~\eqref{eq:Prob1**} for
    the cost $c\in[\Csup(\maxloss),\cplus)\cup\{\maxcost\}$, then $C(\eta) =c$. Moreover $\eta$ is
    anti-Pareto optimal, and $\Csup(\lossup(c)) = c$. We have:
       \begin{equation}
      \label{eq:anti-P=L*-I}
      \cpa=\{ \eta\in \Delta\, \colon\, \loss(\eta)=\lossup(C(\eta)) 
      \quad\text{and}\quad
      C(\eta) \in [\Csup(\maxloss),\cplus) \cup\{\maxcost\}\}.
    \end{equation}

  \item\label{prop:frontier=graph_l_star=I} The anti-Pareto frontier is
    also given by:
    \begin{equation}\label{eq:FL=C=I} \AF = \{(c,
      \lossup(c)) \, \colon \, c \in
      [\Csup(\maxloss),\cplus)\}\cup\{(\maxcost,
      0)\}.
    \end{equation}
  \end{enumerate}
  If furthermore Assumption~\ref{hyp:cost**} holds, then
    $\lossup(\cplus)=0$, 
  the function $\lossup$ is a
  continuous  decreasing  bijection of  $[\Csup(\maxloss),\cplus)$  onto
  $ (0,\maxloss]$, $\Csup$ is the inverse bijection, and
  the  union of the anti-Pareto  frontier   with its limit point~$\{(\cplus, 0)\}$  is
  compact but not connected.
\end{enumerate}
\end{cprop}

  Clearly, the inequality $\cplus\leq \maxcost$ always holds.
  The rest of this section discusses conditions for, and consequences of,
  a strict inequality. In particular it 
  helps decide which of the cases \ref{it:a} and \ref{it:b} holds
  in the previous proposition.

\begin{lemma}[On $\cplus< \maxcost$]
  \label{lem:c0<cmax}
 Suppose Assumption~\ref{hyp:loss+cost} holds. If the cost function is decreasing
 and if $\{\eta\in
       \Delta\, \colon\, \loss(\eta)=0\}$ contains a neighborhood of
       $\zero$ in $\Delta$, then we have $\cplus<\maxcost$. 
\end{lemma}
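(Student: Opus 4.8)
The plan is to use that a strictly decreasing cost attains its maximum $\maxcost$ only at $\zero$, so that any strategy staying away from $\zero$ already has cost strictly below $\maxcost$; the neighborhood hypothesis is exactly what forces every strategy relevant to $\Csup(\ell)$ with $\ell>0$ to keep away from $\zero$.

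First I would fix, using the hypothesis, a neighborhood $V$ of $\zero$ in $\Delta$ that is open in $\Delta$ and contained in $\{\eta\in\Delta\,\colon\,\loss(\eta)=0\}$, and pass to the complement $\Delta\setminus V$. This set is closed in $\Delta$, hence compact, and it is nonempty: if it were empty we would have $\Delta=V\subset\{\loss=0\}$, forcing $\maxloss=0$ and contradicting Assumption~\ref{hyp:loss+cost}. By continuity of $C$ on the compact set $\Delta\setminus V$, the cost attains a maximum $M=\max_{\Delta\setminus V} C$ at some $\eta^\star\in\Delta\setminus V$.

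The key step is then $M<\maxcost$. Since $\zero\in V$ we have $\eta^\star\neq\zero$, and as $\zero=\min\Delta$ this gives $\zero\prec\eta^\star$; because $C$ is decreasing this yields $M=C(\eta^\star)<C(\zero)=\maxcost$. It remains to bound $\Csup(\ell)$ for every $\ell>0$: any $\eta\in\Delta$ with $\loss(\eta)\geq\ell$ satisfies $\loss(\eta)>0$, so it lies outside $V$ (where $\loss$ vanishes), hence $\eta\in\Delta\setminus V$ and $C(\eta)\leq M$. Taking the supremum over such $\eta$ gives $\Csup(\ell)\leq M$ for all $\ell>0$, and therefore $\cplus=\lim_{\ell\to0^+}\Csup(\ell)\leq M<\maxcost$.

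There is no serious obstacle here; the argument is essentially a compactness-plus-strict-monotonicity computation. The only points deserving a line of care are the nonemptiness of $\Delta\setminus V$ (which is where the nondegeneracy $\maxloss>0$ enters) and the conversion of ``bounded away from $\zero$'' into ``cost strictly below $\maxcost$'', which relies precisely on the strict decrease of $C$ and on $\zero$ being the minimum of $\Delta$.
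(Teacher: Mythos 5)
Your proof is correct and follows essentially the same route as the paper: restrict to the compact set $\Delta\setminus V$, use continuity of $C$ to attain a maximum there at some $\eta^\star\neq\zero$, invoke the strict decrease of $C$ together with $\zero=\min\Delta$ to get $C(\eta^\star)<\maxcost$, and observe that every $\eta$ with $\loss(\eta)\geq\ell>0$ lies in $\Delta\setminus V$. The only difference is that you spell out the nonemptiness of $\Delta\setminus V$ via $\maxloss>0$, which the paper leaves implicit.
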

\begin{proof}
  Let $O$ be  an open neighborhood of $\zero$  in $\Delta$ such that
  $\loss=0$  on  $O$.   Since  the  cost  function   is  continuous  and
  $\Delta\setminus  O$  is  compact,  it  reaches  it  maximum,  say  at
  $\eta$. Since $\eta\neq  \zero$ and the cost is  decreasing, we deduce
  that    $C(\eta)<   C(\zero)=\maxcost$.    This   gives    also   that
  $\cplus\leq   \sup\{C(\eta')\,    \colon\,   \eta'\in   \Delta\setminus
  O\}=C(\eta)<\maxcost$.
\end{proof}

For  $\eta_1, \eta_2\in E$, we denote by $\eta_1 \vee \eta_2$
the maximum, when it exists,  of $\eta_1$
and   $\eta_2$: it is then the only element $\eta\in  E$ such that 
$\eta_i\preceq \eta$ for $i=1,2$ and, 
  for  any   $\eta'\in  E$  satisfying
$   \eta_i\preceq  \eta'$  for $i=1,2$,  we  also   have  that
$ \eta\preceq \eta'$. 

\begin{lemma}[On Assumptions~\ref{hyp:loss**} and~\ref{hyp:loss=I}]
  \label{lem:5-et-5'}
  Suppose Assumption~\ref{hyp:loss+cost} holds.
  \begin{enumerate}[(i)]
   \item\label{it:5=}  If Assumption~\ref{hyp:loss**} holds also, then we
     have $\cplus=\maxcost$.
   \item\label{it:5'=} Assume  that the  map $(\eta, \eta')  \mapsto \eta  \vee \eta'$
     from  $\Delta\times \Delta$  to  $\Delta$ is  well  defined and,
     for  any  fixed~$\eta'$,   is
     continuous   at  $\eta=\zero$.   Assume
     furthermore that the   cost  function is   decreasing. If 
     Assumption~\ref{hyp:loss=I}   holds  and   $\cplus=\maxcost$,  then
     Assumption~\ref{hyp:loss**} holds.
  \end{enumerate}
\end{lemma}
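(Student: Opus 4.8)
The plan is to handle both statements by reducing them to a single topological fact about the zero set $Z:=\{\eta\in\Delta\,\colon\,\loss(\eta)=0\}$. The key observation, used in both parts, is that a point lying in the interior of $Z$ is a local maximum of $\loss$ of value $0$, hence — since $\maxloss>0$ by Assumption~\ref{hyp:loss+cost} — a \emph{non-global} one. Consequently, Assumption~\ref{hyp:loss**} is equivalent to the conjunction of Assumption~\ref{hyp:loss=I} with the emptiness of the interior of $Z$: indeed a local maximum of positive value is global by Assumption~\ref{hyp:loss=I}, while a local maximum of value $0$ is exactly an interior point of $Z$.

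For part~\ref{it:5=}, I would argue by contradiction, assuming Assumption~\ref{hyp:loss**} together with $\cplus<\maxcost$. Since $\maxcost=C(\zero)>\cplus$ and $C$ is continuous, the set $U=\{\eta\in\Delta\,\colon\,C(\eta)>\cplus\}$ is a nonempty open neighborhood of $\zero$. First I would check that $\loss\equiv 0$ on $U$: for $\eta\in U$ and any $\ell>0$, the inequality $\loss(\eta)\geq\ell$ would force $C(\eta)\leq\Csup(\ell)\leq\cplus$ (using that $\Csup$ is non-increasing, Proposition~\ref{prop:elementary}, so $\Csup(\ell)\leq\Csup(0+)=\cplus$), contradicting $\eta\in U$; hence $\loss(\eta)<\ell$ for every $\ell>0$, that is $\loss(\eta)=0$. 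Then any point of $U$ is a local maximum of $\loss$ of value $0<\maxloss$, contradicting Assumption~\ref{hyp:loss**}; this gives $\cplus=\maxcost$. (One could instead split on the sign of $\lossup(\maxcost)$, using Proposition~\ref{thm:single-bi-anti}~\ref{special_points-anti} together with the left-continuity of $\lossup$ from Proposition~\ref{prop:elementary}, but the direct argument above is shorter and avoids Assumption~\ref{hyp:cost**}.)

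For part~\ref{it:5'=}, by the reformulation above it suffices to show that the interior of $Z$ is empty, so I would again argue by contradiction. Pick $\eta_0$ in the interior of $Z$ and an open neighborhood $N$ of $\eta_0$ on which $\loss\equiv 0$. The role of the join map is to transport this flatness down to $\zero$: since $\eta_0\in\Delta$ satisfies $\zero\preceq\eta_0$ one has $\zero\vee\eta_0=\eta_0$, and as $\eta\mapsto\eta\vee\eta_0$ is continuous at $\eta=\zero$, there is an open neighborhood $M$ of $\zero$ in $\Delta$ with $\eta\vee\eta_0\in N$ for all $\eta\in M$. For such $\eta$ we have $\eta\preceq\eta\vee\eta_0$, so monotonicity of $\loss$ (Assumption~\ref{hyp:loss+cost}) gives $\loss(\eta)\leq\loss(\eta\vee\eta_0)=0$, i.e. $\loss\equiv 0$ on $M$. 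Thus $Z$ contains a neighborhood of $\zero$; since $C$ is decreasing, Lemma~\ref{lem:c0<cmax} yields $\cplus<\maxcost$, contradicting the hypothesis $\cplus=\maxcost$. Hence the interior of $Z$ is empty and Assumption~\ref{hyp:loss**} follows.

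I expect the only genuinely delicate point to be the use of the join map in part~\ref{it:5'=}: one must use that $\eta\vee\eta_0$ lies in $\Delta$ (the well-definedness hypothesis) and dominates $\eta$, and that continuity \emph{at $\zero$} produces the neighborhood $M$ inside the preimage of $N$ under $\eta\mapsto\eta\vee\eta_0$. This is exactly where the precise continuity assumption, rather than the mere existence of suprema, is needed, and it is what allows monotonicity to propagate the flat zero region of $\loss$ from $\eta_0$ back to $\zero$. The remaining steps are routine monotonicity and continuity bookkeeping together with the cited results.
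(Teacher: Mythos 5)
Your proof is correct. For part~\ref{it:5'=} you follow essentially the same route as the paper: identify a local maximum of value $0$ with an interior point of the zero set of $\loss$, transport the flat neighborhood down to $\zero$ via the continuity of $\eta\mapsto\eta\vee\eta_0$ at $\zero$ together with the monotonicity of $\loss$, and then invoke Lemma~\ref{lem:c0<cmax} to contradict $\cplus=\maxcost$. For part~\ref{it:5=} your route is genuinely (if mildly) different: the paper splits on whether $\Csup(\maxloss)=\maxcost$ and, in the non-degenerate case, uses Proposition~\ref{prop:f_properties**} (applicable since Assumption~\ref{hyp:loss**} is in force) to conclude that $\Csup(\lossup(c))=c>\cplus$ forces $\lossup(c)=0$ for $c\in(\cplus,\maxcost)$, contradicting the strict decrease of $\lossup$ there. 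You instead argue directly from the definitions that $\loss$ vanishes on the open set $\{C>\cplus\}$ — which is a nonempty neighborhood of $\zero$ whenever $\cplus<\maxcost$ — so that its points are non-global local maxima of $\loss$, contradicting Assumption~\ref{hyp:loss**}. Both arguments are valid; yours is more self-contained (it only needs the monotonicity of $\Csup$ from Proposition~\ref{prop:elementary} and the non-negativity of $\loss$, not the anti-Pareto reduction), and it has the pleasant side effect of making the two parts symmetric, since both reduce to the observation that an interior point of $\{\loss=0\}$ is a non-global local maximum.
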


\begin{proof}
   Suppose Assumption~\ref{hyp:loss+cost} holds.    First, let us check that  $\cplus=\maxcost$ under  Assumption~\ref{hyp:loss**}. On the one hand, if $\Csup(\maxloss)=\maxcost$
  (pathological case), then we have $\Csup(\ell)\geq
\Csup(\maxloss)=\maxcost$, and, since by
definition  $\Csup$ is
non-increasing and $\maxcost=\Csup(0)$,  we get  $\cplus=\maxcost$.
  On the other hand,  if  $\Csup(\maxloss)\leq  \cplus<\maxcost$,  thanks to Assumptions~\ref{hyp:loss+cost}
and~\ref{hyp:loss**},  for $c\in (\cplus,
\maxcost)$, there exists a strategy which solves Problem~\eqref{eq:Prob1**} for
the cost $c$ (by Proposition~\ref{thm:single-bi-anti}~\ref{it:exist-c}),
and it is then anti-Pareto (by
Proposition~\ref{prop:f_properties**}~\ref{prop:c_constraint_binding**}),
so  we also have 
 $\Csup(\lossup(c))=c>
\cplus=\Csup(0+)$.
We deduce that $\lossup(c)=0$, which is absurd since $\lossup$
is decreasing on $[\Csup(\maxloss), \maxcost]$. Thus, we have   $\cplus=
\maxcost$, which  proves
Point~\ref{it:5=}.

\medskip
We now prove Point~\ref{it:5'=}.
If $\eta'$ is a local
maximum of $\loss$ such that $\loss(\eta')=0$, since $\loss(\eta) \leq
\loss(\eta\vee \eta')$ as the loss is non-decreasing and since the map 
  $(\eta, \eta')  \mapsto \eta  \vee \eta'$
defined on $\Delta\times \Delta$ is continuous at $\eta=\zero$, we
deduce there exists an open  neighborhood $O$  of $\zero$ in $\Delta$
such that $\loss=0$ on $O$.
By Lemma~\ref{lem:c0<cmax}, we deduce that $\cplus<\maxcost$. 
In
conclusion, if  $\cplus=\maxcost$, we deduce there exists no local
maximum of $\loss$ being a zero of $\loss$, and thus Assumption~\ref{hyp:loss**} holds.
\end{proof}

\begin{remark}[On the continuity assumption in Lemma~\ref{lem:5-et-5'}~\ref{it:5'=}]
  \label{rem:5-et-5'}
  The map $(\eta,\eta') \to \eta\vee \eta'$ is not necessarily continuous
  in general.

  Following \cite[Definition~II.1.2]{schaefer_banach_1974},  recall that
  a  vector space  $E$ endowed  with an  order relation  $\preceq$ is  a
  \emph{vector lattice} if the order is compatible with the vector space
  structure and if,  for $x, y\in E$,  $x \vee y$ exists  (as the unique
  element             of            $E$             such            that
  $(x\preceq  z,  y\preceq  z)\implies   (x\vee  y)\preceq  z$  for  all
  $z$).   Note   that   $x   \wedge   y$  may   then   be   defined   as
  $-  (-x) \vee  (-y)$. If  $E$ is  also equipped  with a  norm that  is
  compatible with  the order in  the sense that  $|\eta|\preceq |\eta'|$
  implies  $\norm{\eta}\leq \norm{\eta'}$,  then $E$  is a  \emph{normed
    vector lattice}, and the map $(\eta, \eta') \mapsto \eta \vee \eta'$
  is               then              uniformly               continuous,
  see~\cite[Proposition~II.5.2]{schaefer_banach_1974}.

  However, this map $(\eta, \eta') \mapsto  \eta \vee \eta'$ may fail to
  be  continuous even  if  $(E,  \norm{\cdot})$ is  a  Banach space  and
  $(E, \preceq)$ a vector lattice. Indeed, consider the vector space $E$
  of real valued  Lipschitz functions defined on  $[0,1]$, equipped with
  the  norm $\norm{f}=\sup  |f|+ L(f)$,  where $L(f)$  is the  Lipschitz
  constant
  $L(f)= \sup\{ |f(x)-f(y)|/(x-y)\, \colon \, 0\leq y <x\leq 1\}$.  This
  makes $(E, \norm{\cdot})$ a Banach space,  which we order by the usual
  order $\leq $  on functions: $f\leq g$ if $\inf  (g-f)\geq 0$, so that
  $(E,  \leq )$  is a  vector lattice.  Denoting by  $\Id$ the  identity
  function,  let    $\eta'=\un   -   \Id$ and $\eta_\varepsilon= \varepsilon  \Id$  for
  $\varepsilon\in [0, 1]$.   The   sequence
  $(\eta_\varepsilon,  \varepsilon>0)$  clearly  converges  in  norm  to
  $\zero$    as     $\varepsilon$    goes    down    to     zero,    but
   $\norm{  \eta_\varepsilon  \vee  \eta' -\eta'}  =1+2\varepsilon$,  so  the
  function $\eta \mapsto \eta \vee \eta'$ is not continuous at $\zero$.

  Notice in this example that the set
  $\Delta=\{\eta\in E\, \colon\, \zero \leq \eta \leq \un\}$ is convex
  with $\zero=\min \Delta$ and $ \un=\max \Delta$, but  not
  compact. Therefore $\Delta$  does not satisfy Condition~\eqref{eq:D-prop}.  By
  construction the set $\Delta$ is   saturated, so that  the map
  $(\eta, \eta') \mapsto \eta \vee \eta'$ is well defined from
  $\Delta \times \Delta$ to $\Delta$; however, the map
  $\eta \mapsto \eta \vee \eta'$ restricted to $\Delta$ is not
  continuous at $\zero$ for $\eta'=\un -\Id\in \Delta$.
\end{remark}

\begin{remark}[Continuity holds for the SIS model]
  \label{rem:5-et-5'-SIS}
  In    the    application   of    the    SIS    model   presented    in
  Section~\ref{sec:k-SIS-model},  there is  no  norm  associated to  the
  topological vector space $E=L^\infty(\Omega)$  endowed with the weak-*
  topology, so  we cannot  appeal to the  general continuity  result for
  normed vector lattices.  Notice that $E$ endowed with  the usual order
  $\leq   $   on    functions   is   a   vector    lattice.    The   set
  $\Delta=\{\eta\in  E\,  \colon\,  \zero   \leq  \eta  \leq  \un\}$  is
  saturated and thus the map $(\eta,  \eta') \mapsto \eta \vee \eta'$ is
  well defined on $\Delta\times \Delta$,  and for any $\eta'\in \Delta$,
  the  map  $\eta  \mapsto  \eta  \vee \eta'$  defined  on  $\Delta$  is
  continuous  at $\eta=\zero$  thanks to  Lemma~\ref{lem:cont}.  On  one
  hand,  an elementary  extension of  this lemma  entails that,  for any
  $\eta'\in \Delta$, the map $\eta\mapsto \eta \vee \eta'$ is continuous
  on   $\Delta$  at   $\eta$   if  $\eta=\un_A$   for  some   measurable
  $A\subset \Omega$. On the other hand, it  is not hard to check that if
  $\Omega=[0,  1]$   and  $\mu$   is  the   Lebesgue  measure,   and  if
  $\eta_0\in           \Delta$           is          such           that
  $\int_\Omega   \eta_0(1-\eta_0)\,  \rd   \mu>0$,  then   there  exists
  $\eta'\in \Delta$,  such that  the map  $\eta\mapsto \eta  \vee \eta'$
  defined  on $\Delta$  is not  continuous at  $\eta_0$ (Hint:  consider
  $\eta'=\eta=  c  \un   $  with  $c\in  (0,  1/2]$   and  the  sequence
  $\eta_n= \eta +  (c/2) \, \sin(2 n \pi \cdot)$  which are all elements
  of  $\Delta$; by  the  Riemann-Lebesgue lemma,  we  get that  $\eta_n$
  weakly-*  converges  to  $\eta=\eta'$   whereas  $\eta_n  \vee  \eta'$
  weakly-*   converges  to   $(1+(2\pi)^{-1})  \eta'$;   thus  the   map
  $\eta\mapsto \eta \vee \eta'$ defined on $\Delta$ is not continuous).

To sum up, the continuity condition in
Lemma~\ref{lem:5-et-5'}~\ref{it:5'=} is satisfied in the SIS model
even though the map  $\eta \mapsto \eta \vee \eta'$ is not
continuous in general on $\Delta$ for all $\eta'\in \Delta$. 
\end{remark}

\section{Miscellaneous properties of the set of outcomes and the Pareto
frontier}\label{sec:diversCL}

We prove results concerning the feasible region, the stability of the Pareto frontier and
its geometry.

\subsection{The feasible region}

In the following proposition we check a number of topological properties of the set of
outcomes~$\FF = \{(C(\eta), \loss(\eta)), \eta\in\Delta\}$. 

\begin{proposition}[No hole in the feasible region]\label{prop:trou}
  Suppose that Assumption \ref{hyp:loss+cost} holds. The feasible region~$\FF$ is compact,
  path connected, and its complement is connected in $\R^2$. It is the whole region
  between the graphs of the one-dimensional value functions:
 \begin{equation}
   \label{eq:FF=}
 \begin{aligned}
    \FF &= \{ (c,\ell) \in \R^2 \,\colon\, 0\leq c \leq \maxcost,\,
    \lossinf(c) \leq \ell \leq \lossup(c) \} \\
	&= \{ (c,\ell) \in \R^2 \,\colon\, 0\leq \ell \leq \maxloss,\,
	\Cinf(\ell) \leq c \leq \Csup(\ell)\}.
  \end{aligned}
 \end{equation}
\end{proposition}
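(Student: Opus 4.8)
The plan is to establish the two displayed set identities first, and then read off compactness, path-connectedness and connectedness of the complement from them. Throughout, write $\Phi=(C,\loss)\colon\Delta\to\R^2$, which is continuous, so $\FF=\Phi(\Delta)$. Compactness of $\FF$ is immediate since $\Delta$ is compact; likewise $\FF$ is path connected, because $\Delta$ is convex by~\eqref{eq:D-prop} (hence path connected) and the continuous image of a path-connected set is path connected. Denote by $A$ the first set in~\eqref{eq:FF=} and by $B$ the second.

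The inclusion $\FF\subseteq A$ is routine from the definitions of the value functions: if $(c,\ell)=(C(\eta),\loss(\eta))$, then $0\le c\le\maxcost$ by Assumption~\ref{hyp:loss+cost}, and since $\eta$ is admissible both for the constraint $C(\cdot)\le c$ and for $C(\cdot)\ge c$, I get $\lossinf(c)\le\ell$ and $\ell\le\lossup(c)$. The symmetric computation with $\Cinf,\Csup$ gives $\FF\subseteq B$. The heart of the matter is the reverse inclusion $A\subseteq\FF$. First I would note that each vertical slice of $A$ is non-empty with $\lossinf(c)\le\lossup(c)$: along $t\mapsto t\un$ the cost runs continuously from $\maxcost$ to $0$, so some $\eta$ has $C(\eta)=c$, whence $\lossinf(c)\le\loss(\eta)\le\lossup(c)$. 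Now fix $(c,\ell)\in A$. Recalling that the infimum and supremum defining the value functions are attained on the compact set $\Delta$, pick $\eta_-$ with $C(\eta_-)\le c$ and $\loss(\eta_-)=\lossinf(c)\le\ell$, and $\eta_+$ with $C(\eta_+)\ge c$ and $\loss(\eta_+)=\lossup(c)\ge\ell$.

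The key device is a two-parameter family reducing the problem to a planar intermediate-value statement. Define the continuous map $\Psi\colon[0,1]^2\to\Delta$ by the bilinear (hence convex) interpolation
\[
  \Psi(s,t) = (1-s)(1-t)\,\eta_- + s(1-t)\,\zero + s\,t\,\eta_+ + (1-s)\,t\,\un ,
\]
which lies in $\Delta$ by convexity. Set $f=(f_1,f_2)$ with $f_1=C\circ\Psi-c$ and $f_2=\loss\circ\Psi-\ell$. Using that $\Delta\subseteq K\cap(\un-K)$, monotonicity of $C$ and $\loss$, and $\zero=\min\Delta$, $\un=\max\Delta$, I would check the four edges: on $\{s=0\}$ one has $\Psi\succeq\eta_-$ so $f_1\le0$; on $\{s=1\}$, $\Psi=t\,\eta_+\preceq\eta_+$ so $f_1\ge0$; on $\{t=0\}$, $\Psi=(1-s)\,\eta_-\preceq\eta_-$ so $f_2\le0$; and on $\{t=1\}$, $\Psi\succeq\eta_+$ so $f_2\ge0$. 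The Poincar\'e--Miranda theorem (the planar form of the intermediate value theorem, equivalent to Brouwer's theorem) then produces $(s^\ast,t^\ast)$ with $f(s^\ast,t^\ast)=0$, i.e.\ $C(\Psi(s^\ast,t^\ast))=c$ and $\loss(\Psi(s^\ast,t^\ast))=\ell$, so $(c,\ell)\in\FF$. This yields $\FF=A$; the same construction with the roles of $C$ and $\loss$ exchanged gives $\FF=B$ (and hence $A=B$). I expect the edge-sign verification feeding Poincar\'e--Miranda to be the main obstacle, since it is exactly where convexity of $\Delta$ and the monotonicity of \emph{both} objectives must be combined correctly; the coupling between cost and loss along any single interpolation is what forces a genuinely two-dimensional argument rather than a one-parameter intermediate-value one.

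Finally, connectedness of the complement follows from the region description. Since $A\subseteq[0,\maxcost]\times[0,\maxloss]$, I would show every point of $\R^2\setminus\FF$ can be joined to the far point $(-1,-1)$ by a path avoiding $\FF$: a point below the lower curve (respectively above the upper curve) is pushed straight down (respectively up) out of the strip $\{0\le c\le\maxcost\}$ and then routed to $(-1,-1)$ along the lines $\{\ell=-1\}$ or $\{\ell=\maxloss+1\}$ and $\{c=-1\}$, all of which lie entirely outside $A$; points with $c\notin[0,\maxcost]$ lie on vertical lines disjoint from $A$ and are handled directly. Hence $\R^2\setminus\FF$ is path connected, and in particular connected.
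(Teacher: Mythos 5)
Your proof is correct, and the decisive step --- showing that every point between the graphs of the value functions is realized by some strategy --- is handled by a genuinely different device than the paper's. The paper first places the four graphs of $\lossinf$, $\lossup$, $\Cinf$, $\Csup$ inside $\FF$ by a one-parameter intermediate-value argument along $t\mapsto t\eta$, and then treats an interior point $A$ by contradiction: assuming $A\notin \FF$, it builds a loop in $\FF$ passing through $\zero$, $\un$ and two auxiliary strategies, which is contractible inside $\FF$ yet has winding number $1$ around $A$ (by comparison with the boundary of the rectangle $[0,\maxcost]\times[0,\maxloss]$). You instead apply the Poincar\'e--Miranda theorem to the bilinear interpolation square with corners $\eta_-,\zero,\eta_+,\un$; your edge sign conditions check out (each edge is comparable, in the order induced by $K$, to one of $\eta_-$ or $\eta_+$, and the monotonicity of $C$ and $\loss$ then pins the sign of $f_1$ or $f_2$), and this single application covers boundary and interior points of the region uniformly, with no case split and no argument by contradiction. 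Both routes ultimately rest on a two-dimensional topological input equivalent to Brouwer's theorem (winding numbers there, Poincar\'e--Miranda here) and consume the same structural hypotheses: convexity of $\Delta$, $\zero=\min\Delta$, $\un=\max\Delta$, and the continuity and monotonicity of $C$ and $\loss$ from Assumption~\ref{hyp:loss+cost}. The remaining items --- compactness and path-connectedness of $\FF$, the easy inclusion of $\FF$ into the region, and connectedness of the complement by escaping to infinity along vertical and horizontal lines --- coincide with the paper's argument.
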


\begin{proof}
 Recall that $\Delta$ is convex and thus 
  path-connected.     The region $\FF$ is compact
  and  path-connected  as  a  continuous image  by  $(C,\loss)$  of  the
  compact, path-connected set $\Delta$.

  By symmetry, it is enough to prove that $\FF$ is equal to
  $F_1=\{ (c,\ell) \in \R^2 \,\colon\, 0\leq c \leq \maxcost,\, \lossinf(c) \leq \ell \leq
  \lossup(c) \} $. Let $(c,\ell) \in \FF$ and $\eta\in \Delta$ be such that $(c,\ell) =
  (C(\eta),\loss(\eta))$. By definition of $\lossinf$ and $\lossup$, we have: $
  \lossinf(c) = \lossinf(C(\eta)) \leq \loss( \eta) \leq \lossup(C(\eta)) = \lossup(c)$.
  We deduce that $(c,\ell)\in F_1$.

  \medskip

  Let us now prove that $F_1\subset  \FF$. Let us first consider a point
  of  the form  $(c, \lossinf(c))$,  where $0\leq  c \leq  \maxcost$. By
  definition,  there  exists  $\eta$  such that  $C(\eta)  \leq  c$  and
  $\loss(\eta)   =  \lossinf(c)$.   Let  $\eta_t   =  t\eta$.   The  map
  $t\mapsto    C(\eta_t)$     is    continuous    from     $[0,1]$    to
  $[C(\eta),\maxcost]$. As  $c\in[C(\eta),\maxcost]$,  there  exists
  $s$  such  that $C(\eta_s)  =  c$.  Since $\loss$  is  non-decreasing,
  $\loss(\eta_s)\leq  \loss(\eta)$.  By   definition  of  $\lossinf(c)$,
  $\loss(\eta_s)\geq               \lossinf(c)$.               Therefore
  $(c,\lossinf(c))     =    (C(\eta_s),\loss(\eta_s))$     belongs    to
  $\FF$. Similarly the graphs of $\Cinf$, $\Csup$ and $\lossup$ are also
  included in $\FF$.

  So, it is enough to check that, if $A=(c, \ell)$ is in $F_1$, with $c\in(0,\maxcost)$
  and $\ell\in(\lossinf(c),\lossup(c))$, then $A$~belongs to $\FF$. We shall assume that
  $A\not\in \FF$ and derive a contradiction by building a loop in $\FF$ that encloses $A$
  and which can be continuously contracted into a point in $\FF$.

  Since $\lossinf(c) < \ell< \lossup(c)$, there exist $\etainf$ and $\etasup$ such that:
  \[
    C(\etainf) \leq c, \quad \loss(\etainf) < \ell, \quad C(\etasup) \geq c \quad
  \text{and}\quad \loss(\etasup) > \ell.
  \]
  We concatenate the four continuous paths in $\Delta$ defined for $u\in [0, 1]$:
  \[
    u \mapsto u\etainf, \quad
    u\mapsto (1-u) \etainf + u \un, \quad u\mapsto (1-u) \un + u\etasup \quad\text{and}\quad
  u\mapsto (1-u) \etasup,
  \]
to obtain a continuous loop $(\eta_t, t\in [0, 4])$ from $[0,4]$ to $\Delta$, such that:
  \[
    (\eta_0,\eta_1,\eta_2,\eta_3,\eta_4)
     = ( \zero,  \etainf,  \un,\etasup, \zero).
  \]

  We now define a continuous family of loops $(\gamma_s, s\in [0, 1])$ in $\R^2$ by
  \[
    \gamma_s(t) = (C(s \eta_t),\loss(s\eta_t), t\in [0, 4]).
  \]
  By definition, for all $s\in[0,1]$, $\gamma_s$ is a continuous loop
  in $\FF$. Since $A = (c,\ell) \notin \FF$, the loops~$\gamma_s$ do not
  contain~$A$, so the winding number $W(\gamma_s,A)$ is well-defined
  (see for example \cite[Definition 6.1]{HJ09}). As $A\not\in \FF$, we
  get that~$\gamma_s$ is a continuous deformation in
  $\R^2\setminus\{A\}$ from~$\gamma_1$ to~$\gamma_0$. Thanks to
  \cite[Theorem 6.5]{HJ09}, this implies that~$W(\gamma_s,A)$ does not
  depend on $s\in [0, 1]$.

  For $s=0$, the loop degenerates to the single point $(C(0),0)$ so the winding number is
  $0$. For $s=1$, let us check that the winding number is $1$, which will provide the
  contradiction. To do this, we compare $\gamma_1$ with a simpler loop $\delta$ defined
  by:
  \[ \delta (0) = \delta (4) = (\maxcost,0), \quad \delta (1) = (0,0), \quad \delta (2) =
    (0,\maxloss) \quad\text{and}\quad
    \delta (3) = (\maxcost,\maxloss),
  \]
  and by linear interpolation for non integer values of $t$: in other words, $\delta $ runs
  around the perimeter of the axis-aligned rectangle with corners $(0,0)$ and
  $(\maxcost,\maxloss)$. Clearly, we have $W(\delta , A)=1$.

  Let $M_t$, $N_t$ denote $\gamma_1(t)$ and $\delta (t)$ respectively. For $t\in [0,1]$,
  we have $N_t = ((1-t)\maxcost,0)$, so the second coordinate of $\overrightarrow{AN_t}$
  is non-positive. On the other hand $\loss(t\etainf) \leq \loss(\etainf) < \ell$, so the
  second coordinate of $\overrightarrow{AM_t}$ is negative. Therefore the two vectors
  $\overrightarrow{AN_t}$ and $\overrightarrow{AM_t}$ cannot point in opposite directions.
  Similar considerations for the other values of $t\in [1, 4]$ show that
  $\overrightarrow{AN_t}$ and $\overrightarrow{AM_t}$ never point in opposite directions.
  By \cite[Theorem 6.1]{HJ09}, the winding numbers $W(\gamma_1, A)$ and $W(\delta, A)$ are
  equal, and thus $W(\gamma_1, A)=1$.

  This gives that $A\in \FF$ by contradiction, and thus $F_1\subset \FF$.

  \medskip

  Finally, it is easy to check that $F_1$ has a connected complement, because $F_1$ is
  bounded, and all the points in $F_1^c$ can reach infinity by a straight line: for
  example, if $\ell> \lossup(c)$, then the half-line $\{(c,\ell'), \ell' \geq \ell\}$ is
  in $F_1^c$.
\end{proof}

\subsection{Geometric properties}
  When  the cost function is  affine, then
there is a nice geometric property of the Pareto frontier.

\begin{lemma}\label{lem:affine-chord}
  Suppose that Assumption \ref{hyp:loss+cost} holds, the cost function
  $C$ is affine, and the loss function $\loss$ is sub-homogeneous.
  Then, we have for all $c\in [0, \maxcost]$ and $\theta\in[0,1]$:
 \begin{equation}
   \label{eq:corde}
\lossinf( \theta c+ (1-\theta)\maxcost ) \leq \theta \lossinf(c).
 \end{equation}
\end{lemma}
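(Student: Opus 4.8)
The plan is to produce, for each fixed $c\in[0,\maxcost]$ and $\theta\in[0,1]$, a single strategy that is admissible for the cost budget $\theta c+(1-\theta)\maxcost$ and whose loss is at most $\theta\lossinf(c)$. The natural candidate is the rescaled strategy $\theta\eta$, where $\eta$ is an optimal strategy for $\lossinf(c)$: scaling by $\theta$ should shrink the loss by a factor $\theta$ (via sub-homogeneity) while moving the cost along the affine line in a controlled way.

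First I would invoke Proposition~\ref{prop:main-result} to obtain a minimizer $\eta\in\Delta$ of Problem~\eqref{eq:Prob1} at cost $c$, so that $C(\eta)\leq c$ and $\loss(\eta)=\lossinf(c)$. Since $\Delta$ is convex with $\zero=\min\Delta$, the rescaled strategy $\theta\eta=\theta\eta+(1-\theta)\zero$ again lies in $\Delta$, so it is a legitimate competitor in the definition of $\lossinf$.

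The key step is to track the cost of $\theta\eta$ using the affine structure. Writing the affine cost as $C(\cdot)=\maxcost-\langle x^*,\cdot\rangle$ (the additive constant is forced to be $\maxcost$ by $C(\zero)=\maxcost$), one computes
\[
  C(\theta\eta)=\maxcost-\theta\langle x^*,\eta\rangle=(1-\theta)\maxcost+\theta\,C(\eta)\leq (1-\theta)\maxcost+\theta c,
\]
using $\langle x^*,\eta\rangle=\maxcost-C(\eta)$ and $C(\eta)\leq c$. Thus $\theta\eta$ is admissible for the cost constraint $C(\cdot)\leq\theta c+(1-\theta)\maxcost$. Combining this with the sub-homogeneity of $\loss$ yields
\[
  \lossinf\bigl(\theta c+(1-\theta)\maxcost\bigr)\leq \loss(\theta\eta)\leq \theta\,\loss(\eta)=\theta\,\lossinf(c),
\]
which is exactly~\eqref{eq:corde}.

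I do not expect a genuine obstacle here: the whole argument hinges on the elementary identity $C(\theta\eta)=(1-\theta)\maxcost+\theta\,C(\eta)$, which is precisely what affineness of $C$ (together with $C(\zero)=\maxcost$) provides, so the affine hypothesis is used in an essential and non-negotiable way. The only points requiring a little care are that $\theta\eta$ must remain in $\Delta$ — handled by convexity together with $\zero\in\Delta$ — and that a minimizer realizing $\lossinf(c)$ actually exists, which is guaranteed by Proposition~\ref{prop:main-result}.
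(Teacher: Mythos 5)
Your proof is correct and follows essentially the same route as the paper: rescale an optimal strategy $\eta$ to $\theta\eta$, use affineness to get $C(\theta\eta)=\theta C(\eta)+(1-\theta)\maxcost$, and conclude by sub-homogeneity of $\loss$. The one (harmless, in fact slightly leaner) difference is that you work directly with the inequality $C(\eta)\leq c$ supplied by Proposition~\ref{prop:main-result}, whereas the paper first splits off the trivial case $c\in[\Cinf(0),\maxcost]$ and then invokes Lemma~\ref{lem:c-dec+L-hom} and Proposition~\ref{prop:f_properties} to obtain a Pareto-optimal $\eta$ with $C(\eta)=c$ exactly --- machinery your version does not need.
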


\begin{remark}
 Geometrically, Lemma~\ref{lem:affine-chord} means that the graph of the loss
 $\lossinf \, \colon \, [0,\maxcost] \to [0,\maxloss]$ is below its chords with end point
 $(1,\lossinf(\un)) = (1,0)$. See Fig.~\ref{fig:pareto_frontier} for a typical
 representation of the Pareto frontier (red solid line).
\end{remark}

\begin{proof}
  Let $\theta\in[0,1]$.
  For $c\in [\Cinf(0), \maxcost]$, we have $\lossinf(c)=0$,
  so~\eqref{eq:corde} is trivially true. 

  Let $c\in [0, \Cinf(0)]$. 
Thanks to Lemma
 \ref{lem:c-dec+L-hom}, Assumption \ref{hyp:loss} holds. Thus, thanks
 to Propositions~\ref{prop:main-result}
 and~\ref{prop:f_properties}~\ref{prop:c_constraint_binding},
 there exists $\eta\in \cp$ with cost $C(\eta)=c$ and thus
 $\loss(\eta)=\lossinf (c)$. Since $C$ is affine, we have:
 \[
   C(\theta \eta) = \theta C(\eta)+ (1 - \theta) C(\zero) = \theta c
   +(1- \theta)\maxcost .
 \]
 Therefore,    the    strategy    $\theta\eta$   is    admissible    for
 Problem~\eqref{eq:Prob1}          with         cost          constraint
 $C(\cdot)  \leq \theta  c  + (1-\theta)  \maxcost$.  This implies  that
 $\lossinf(\theta c+  (1-\theta) \maxcost) \leq \loss(\theta  \eta) \leq
 \theta \loss(\eta)= \theta \lossinf(c)$,  thanks to the sub-homogeneity
 of the loss function $\loss$.
\end{proof}

In some cases, see for example \cite[Section 4]{ddz-Re}, it is possible to prove that the
loss function is a convex function  (which in turn implies
Assumption~\ref{hyp:loss}). In this case, choosing a convex cost function implies that
Assumption~\ref{hyp:cost} holds and the Pareto frontier is convex. A similar result holds
in the concave case. We provide a short proof of this result.

\begin{proposition}
  [Convexity/concavity of the Pareto frontier]%
  \label{prop:cvex}
  Suppose that Assumption \ref{hyp:loss+cost} holds. If the cost function $C$ and the loss
  function $\loss$ are convex, then the functions $\Cinf$ and $\lossinf$ are convex. If
  the cost function $C$ and the loss function $\loss$ are concave, then the functions
  $\Csup$ and $\lossup$ are  concave.
\end{proposition}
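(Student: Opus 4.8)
The plan is to prove all four claims by a single elementary interpolation argument, relying on two facts already available: the set $\Delta$ is convex, and, since $C$ and $\loss$ are continuous on the compact set $\Delta$, the infima defining $\lossinf$ and $\Cinf$ are attained as minima and the suprema defining $\lossup$ and $\Csup$ are attained as maxima (this is the compactness-plus-continuity remark recorded right after the value functions are introduced). I would write out the convexity of $\lossinf$ in full and then indicate that the other three statements follow by symmetry or by reversing inequalities.

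For the convexity of $\lossinf$ under convex $C$ and $\loss$, I would fix $c_0, c_1 \in [0, \maxcost]$ and $\theta \in [0,1]$, and select minimizers $\eta_0, \eta_1 \in \Delta$ of Problem~\eqref{eq:Prob1} with $C(\eta_i) \leq c_i$ and $\loss(\eta_i) = \lossinf(c_i)$. The candidate is the convex combination $\eta_\theta = \theta \eta_0 + (1-\theta)\eta_1$, which lies in $\Delta$ by convexity of $\Delta$. Convexity of $C$ gives $C(\eta_\theta) \leq \theta C(\eta_0) + (1-\theta) C(\eta_1) \leq \theta c_0 + (1-\theta) c_1$, so $\eta_\theta$ is admissible for Problem~\eqref{eq:Prob1} with cost bound $\theta c_0 + (1-\theta)c_1$. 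Then convexity of $\loss$ yields $\lossinf(\theta c_0 + (1-\theta) c_1) \leq \loss(\eta_\theta) \leq \theta \loss(\eta_0) + (1-\theta)\loss(\eta_1) = \theta \lossinf(c_0) + (1-\theta)\lossinf(c_1)$, which is exactly the convexity of $\lossinf$.

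The convexity of $\Cinf$ follows from the identical argument after exchanging the roles of $C$ and $\loss$ and using minimizers of Problem~\eqref{eq:Prob2}. For the concave case I would run the same interpolation with maximizers: given $\eta_0, \eta_1$ attaining $\lossup(c_0), \lossup(c_1)$ in Problem~\eqref{eq:Prob1**} with $C(\eta_i) \geq c_i$, concavity of $C$ gives $C(\eta_\theta) \geq \theta c_0 + (1-\theta) c_1$, so $\eta_\theta$ is admissible for the maximization problem at the interpolated cost, and concavity of $\loss$ then gives $\lossup(\theta c_0 + (1-\theta)c_1) \geq \theta \lossup(c_0) + (1-\theta)\lossup(c_1)$; the concavity of $\Csup$ is symmetric, using maximizers of Problem~\eqref{eq:Prob2**}.

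I do not expect any genuine obstacle: this is the textbook statement that the value function of a convex (resp.\ concave) program is convex (resp.\ concave). The only two points needing care are that the interpolated strategy stays admissible---handled by convexity of $\Delta$ together with the convexity or concavity of the constrained functional---and that the minimizers and maximizers actually exist, which is guaranteed by compactness of $\Delta$ and continuity of $C$ and $\loss$. In particular, no additional regularity such as Assumption~\ref{hyp:cost} or~\ref{hyp:loss} is required.
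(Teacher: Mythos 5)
Your proof is correct and follows essentially the same interpolation argument as the paper: pick optimizers at the two endpoints, form their convex combination (which stays in $\Delta$ by convexity), use convexity/concavity of the constrained functional to check admissibility, and then of the objective to bound the value function. The paper writes out the case of $\Cinf$ in detail and treats the others as symmetric, whereas you write out $\lossinf$, but the content is identical, including the observation that only Assumption~\ref{hyp:loss+cost} (via existence of optimizers from Proposition~\ref{prop:main-result}) is needed.
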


\begin{proof}
  Let $\ell_0,\ell_1 \in [0, \maxloss]$. By Proposition \ref{prop:main-result}, there
  exist $\eta_0, \eta_1\in \Delta$ such that $\loss (\eta_i) \leq \ell_i$ and $C(\eta_i) =
  \Cinf(\ell_i)$ for $i\in \{0,1\}$. For $\theta\in [0, 1]$, let $\ell=(1-\theta) \ell_0 +
  \theta \ell_1$. Since $C$ and $\loss$ are assumed to be convex, $\eta = (1-\theta)\eta_0
  + \theta \eta_1$ satisfies:
  \[
    C(\eta) \leq (1-\theta)\Cinf(\ell_0) + \theta
    \Cinf(\ell_1) \quad\text{and}\quad \loss (\eta) \leq (1-\theta)\ell_0 + \theta \ell_1.
  \]
  Therefore, we get that $\Cinf((1-\theta)\ell_0 + \theta \ell_1) \leq C(\eta) \leq
  (1-\theta)\Cinf(\ell_0) + \theta \Cinf(\ell_1)$, and $\Cinf$ is convex. The proof of the
  convexity of $\lossinf$ is similar. The concave case is also similar.
\end{proof}

\subsection{Stability}
In view of the application to vaccination optimization, the cost
function is known but the loss function depends on parameters that have
to be estimated, and thus the loss function might be approximated by a
sequence of loss functions, see~\cite{ddz-theory-topo} for the stability
of $R_e$ and $\I$. 
It is then natural to consider the stability of the Pareto frontier and the set of
Pareto optima. Recall that under Assumptions~\ref{hyp:loss+cost} and~\ref{hyp:loss},
 thanks to~\eqref{eq:FL=C*}, the graph
$\{(c,\lossinf(c)) \, \colon \, c \in [0,\maxcost]\}$ of~$\lossinf$ is
the union of the Pareto frontier and the straight line joining
$(0,\Cinf(0))$ to $(0,\maxcost)$ and can thus be seen as an extended
Pareto frontier. The  following proposition, whose proof  is immediate, 
implies in particular the convergence of the extended Pareto
frontier. This result can also easily be adapted to the anti-Pareto frontier.
To stress the dependence  in the loss function $\loss$, we shall write $
\mathcal{P}_\loss$ for  the set of
Pareto optima $\mathcal{P}$.

\begin{proposition}[Stability of the set of Pareto optima]
  \label{prop:F-stab}
  Let $C$ be a cost function  and $(\loss^{(n)}, n\in \N)$ a sequence of
  loss functions  converging uniformly  on $\Delta$  to a  loss function
  $\loss$. Assume  that Assumptions  \ref{hyp:loss+cost}, \ref{hyp:cost}
  and  \ref{hyp:loss} hold  for  the  cost $C$  and  the loss  functions
  $\loss^{(n)}$, $n\in \N$, and $\loss$. Then $\lossinf^{(n)}$ converges
  uniformly  to $\lossinf$.  Let  $\eta\in  \Delta$ be  the  limit of  a
  sequence   $(\eta_n,   n\in   \N)$   of   Pareto   optima,   that   is
  $\eta_n\in  \mathcal{P}_{\loss^   {(n)}}$  for   all  $n\in   \N$.  If
  $C(\eta)\leq \Cinf(0)$, then we have $\eta\in \mathcal{P}_{\loss}$.
\end{proposition}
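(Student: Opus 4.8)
The plan is to establish the uniform convergence of the value functions first, since it drives everything else. The key observation is that the cost $C$ is held fixed, so the feasible set $\{\eta\in\Delta\,\colon\, C(\eta)\leq c\}$ entering the definition of $\lossinf$ is the \emph{same} for every index $n$. Setting $\varepsilon_n=\sup_{\eta\in\Delta}\abs{\loss^{(n)}(\eta)-\loss(\eta)}$, which tends to $0$ by assumption, the pointwise bounds $\loss^{(n)}\leq\loss+\varepsilon_n$ and $\loss\leq\loss^{(n)}+\varepsilon_n$ on $\Delta$ pass to the infimum over this common constraint set, yielding $\abs{\lossinf^{(n)}(c)-\lossinf(c)}\leq\varepsilon_n$ for all $c\in[0,\maxcost]$. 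This is precisely the claimed uniform convergence, and it is the only place where the uniformity of $\loss^{(n)}\to\loss$ is used.

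Next I would identify $\eta$ as a Pareto optimum of $\loss$ using the characterization~\eqref{eq:Pareto=L} from Proposition~\ref{prop:f_properties}~\ref{prop:c_constraint_binding}, which holds under Assumptions~\ref{hyp:loss+cost} and~\ref{hyp:loss}. Since $\eta_n\in\mathcal{P}_{\loss^{(n)}}$, Proposition~\ref{thm:single-bi}~\ref{single-bi} tells us $\eta_n$ solves Problem~\eqref{eq:Prob1} for the cost $C(\eta_n)$, so that $\loss^{(n)}(\eta_n)=\lossinf^{(n)}(C(\eta_n))$. The whole proof then reduces to passing to the limit in this single identity and checking $\loss(\eta)=\lossinf(C(\eta))$; the remaining requirement $C(\eta)\leq\Cinf(0)$ for membership in~\eqref{eq:Pareto=L} is exactly the standing hypothesis.

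For the passage to the limit, on the left-hand side $\loss^{(n)}(\eta_n)\to\loss(\eta)$ because $\abs{\loss^{(n)}(\eta_n)-\loss(\eta_n)}\leq\varepsilon_n\to0$ and $\loss(\eta_n)\to\loss(\eta)$ by continuity of $\loss$ along $\eta_n\to\eta$. On the right-hand side $\abs{\lossinf^{(n)}(C(\eta_n))-\lossinf(C(\eta_n))}\leq\varepsilon_n\to 0$ by the first step, while $\lossinf(C(\eta_n))\to\lossinf(C(\eta))$ since $C$ is continuous and $\lossinf$ is continuous on the whole of $[0,\maxcost]$: it is continuous and decreasing on $[0,\Cinf(0)]$ by Proposition~\ref{prop:f_properties-2}~\ref{item:inv-inf} and vanishes on $[\Cinf(0),\maxcost]$ by Proposition~\ref{thm:single-bi}~\ref{special_points}. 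Combining both sides gives $\loss(\eta)=\lossinf(C(\eta))$, and with $C(\eta)\leq\Cinf(0)$ the characterization~\eqref{eq:Pareto=L} yields $\eta\in\mathcal{P}_\loss$.

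There is no genuine obstacle here---this is why the authors call the proof immediate---but two points deserve attention. First, the fixedness of $C$ is what makes uniform convergence of $\lossinf^{(n)}$ follow so cleanly from that of $\loss^{(n)}$; if $C$ also varied one would need additional stability of $C$ and a more delicate argument. Second, the hypothesis $C(\eta)\leq\Cinf(0)$ cannot be dropped: it is the single component of~\eqref{eq:Pareto=L} not produced by the limiting procedure, and without it a limit of Pareto optima could land on the flat zero-loss part of $\lossinf$ (where $C(\eta)>\Cinf(0)$), whose points are not Pareto optimal.
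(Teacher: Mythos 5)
The paper omits the proof of Proposition~\ref{prop:F-stab} entirely, declaring it immediate, and your argument correctly supplies it along the natural route the authors evidently intended: uniform convergence of $\lossinf^{(n)}$ follows from taking infima of $\loss^{(n)}\leq\loss+\varepsilon_n$ and $\loss\leq\loss^{(n)}+\varepsilon_n$ over the fixed constraint set, and membership of the limit $\eta$ in $\cp_\loss$ follows by passing to the limit in $\loss^{(n)}(\eta_n)=\lossinf^{(n)}(C(\eta_n))$ and invoking the characterization~\eqref{eq:Pareto=L}. All the ingredients you cite (continuity of $\lossinf$ on the whole of $[0,\maxcost]$ via Propositions~\ref{prop:f_properties-2} and~\ref{thm:single-bi}, and the role of the hypothesis $C(\eta)\leq\Cinf(0)$) are used correctly, so the proposal is complete.
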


\section{SIS model and vaccination strategies}
\label{sec:k-SIS-model}

We  follow  the presentation  of~\cite{delmas_infinite-dimensional_2020}
for an heterogeneous  SIS model in a constant size  population, and give
natural examples for the loss  function and cost function of vaccination
strategies.

\subsection{The setting}
Let~$(\Omega, \cf,  \mu)$ be a  probability space, where~$x  \in \Omega$ represents  a
\emph{feature}  and the  probability measure~$\mu(\mathrm{d}  x)$ represents the fraction of the
population with feature~$x$. The parameters of the SIS model are given by a \emph{recovery
rate function} $\gamma$, which is a positive bounded measurable function defined on
$\Omega$, and a \emph{transmission rate kernel} $k$, which is a non-negative measurable
function defined on $\Omega^2$. \medskip

In accordance with \cite{delmas_infinite-dimensional_2020}, we consider
for a kernel $\kk$
on $\Omega$ and $q\in (1, +\infty )$ its   norms:
\begin{equation*}
  \norm{\kk}_{\infty ,q}=\sup_{x\in \Omega}\, \left(\int_\Omega \kk(x,y)^q\,
    \mu(\mathrm{d} y)\right)^{1/q}
\end{equation*}
For a kernel $\kk$ on $\Omega$ such that $\norm{\kk}_{\infty ,q}$ is finite for some $q\in
(1, +\infty )$, we define the integral operator~$\Tinf_{\kk}$  on the set $\cl$ of bounded
measurable real-valued function on $\Omega$ by:
\begin{equation}\label{eq:def-Tkk}
  \Tinf_\kk (g) (x) = \int_\Omega \kk(x,y) g(y)\,\mu(\mathrm{d}y)
  \quad \text{for } g\in \cl \text{ and } x\in \Omega.
\end{equation}
When $\kk$ is non-negative, this operator is
also positive,  that is,  $\Tinf_\kk(\mathscr{L}^\infty_+)\subset
\mathscr{L}^\infty_+$, where $\mathscr{L}^\infty_+=\{f\in
\mathscr{L}^\infty\, \colon\, f\geq \zero\}$ is the cone of non-negative
bounded measurable functions.

We shall consider the kernel $\kkk=k/\gamma$   defined by:
\begin{equation}
  \label{eq:def-kk}
 {    \kkk(x,y)=k(x,y)\, \gamma(y)^{-1}.}
\end{equation}

\begin{hyp}[On the SIS model]\label{hyp:k-g}
  Let  $(\Omega,\cf,\mu)$ be  a  probability space.   The recovery  rate
  function~$\gamma$  defined  on  $\Omega$  is  measurable  bounded  and
  positive.    The  transmission   rate   kernel~$k$   on  $\Omega$   is
  non-negative         measurable          and         such         that
  $\norm{k/\gamma}_{\infty , q}<+\infty $ for some $q\in (1, +\infty )$.
  \end{hyp}

  Under  Assumption~\ref{hyp:k-g}, the  integral operator~$\Tinf_{\kkk}$
  on $\mathscr{L}^\infty$ is   a bounded positive linear operator
  on  $\mathscr{L}^\infty$; it  is the  so called  \emph{next-generation
    operator}.  Notice  also that  $\norm{k}_{\infty ,q}$ is  finite (as
  $\gamma$  is bounded),  and  thus  the operator  $\Tinf_k$  is also  a
  bounded positive linear operator on $\mathscr{L}^\infty$.

\subsection{The SIS model}
\label{sec:sis-model}

Let $\Delta'=\{f\in\mathscr{L}^\infty\, \colon\, 0\leq  f\leq  1\}$ be the
subset of non-negative functions bounded by~$1$.
   The  SIS dynamics  considered
in~\cite{delmas_infinite-dimensional_2020} follows  the vector field~$F$
defined on~$\Delta'$ by:
\begin{equation}\label{eq:vec-field}
 {  F(g) = (1 - g) \Tinf_k (g) - \gamma g,}
\end{equation}
where the term $(1 -g)$ corresponds to the \emph{incidence rate} given
by the usual \emph{law of mass action}.

More precisely, let~$u_t(x)\in [0, 1]$ models the proportion of 
individuals with feature~$x\in \Omega$ which are  infected at time~$t\geq 0$.
 The SIS dynamics considered for~$u=(u_t, t\in \R)$  
 is given by the following EDO in $\mathscr{L}^\infty$:
\begin{equation}\label{eq:SIS2}
  {   \partial_t u_t = F(u_t)} \quad\text{for } t\in \R_+,
\end{equation}
with initial condition~$u_0\in \Delta'$. It is proved
in~\cite{delmas_infinite-dimensional_2020} that such a solution~$u$
exists and is unique.

\medskip

An \emph{equilibrium}  of~\eqref{eq:SIS2} is a function~$g  \in \Delta'$
such that~$F(g) =  0$.  Notice that the  \emph{disease free equilibrium}
(DFE), which is the zero  function, is indeed an equilibrium.  According
to  \cite{delmas_infinite-dimensional_2020},  there   exists  a  maximal
equilibrium~$\mathfrak{g}$, \textit{i.e.}, an  equilibrium such that all
other   equilibria~$h\in   \Delta'$  are   dominated   by~$\mathfrak{g}$:
$h \leq \mathfrak{g}$.  It is  towards this maximal equilibrium that the
process  stabilizes  when  started  from   a  situation  where  all  the
population is infected, that is, $ {  \lim_{t\rightarrow \infty }
  u_t=\mathfrak{g}} $ if $u_0(x)=1$ for all $x\in \Omega$.

For $T$ a bounded operator on $\cl$ endowed with its usual  supremum
norm, we define by~$\norm{T}_{\cl}$ its operator norm and its spectral
radius is given by $
   \rho(T)=  \lim_{n\rightarrow \infty } \norm{T^n}_{\cl}^{1/n}$.
The \emph{reproduction  number}~$R_0$ associated to the  SIS model given
by~\eqref{eq:SIS2}  is  the  spectral   radius  of  the  next-generation
operator:
\begin{equation}\label{eq:def-R0-2}
  {   R_0= \rho (\Tinf_{\kkk}).}
\end{equation}
If~$R_0\leq 1$  (sub-critical and  critical case)
the maximal
equilibrium is the DFE and furthermore~$u_t$ converges
pointwise  to zero  when~$t\to\infty$.   If~$R_0>1$
(super-critical  case), the  maximal equilibrium  $\mathfrak{g}$ is  not
the  disease free equilibrium and~$\int_\Omega \mathfrak{g} \, \mathrm{d}\mu > 0$.

\medskip

We will denote by~$\I_0 $ the \emph{fraction of infected individuals at
the maximal  equilibrium}:
\begin{equation}\label{eq:def-I0-2}
   {\I_0=\int_\Omega \mathfrak{g}
  \,  \rd \mu,}
\end{equation}
where we write $\int_\Omega f \, \rd \mu$ for $\int_\Omega f(x) \, \mu(\rd
x)$. 
We deduce that:
\[
  \I_0>0\quad \Longleftrightarrow\quad   R_0>1.
\]

\subsection{Vaccination strategies}\label{sec:vacc}
We  assume the  vaccine has  \emph{perfect  efficiency} and  that it  is
applied  once   and  for  all   at  time  $t=0$.    A  \emph{vaccination
  strategy}~$\eta$   is   an   element   of~$\Delta'$,   where~$\eta(x)$
represents the proportion  of \emph{\textbf{non-vaccinated}} individuals
with feature~$x$.  Notice that~$\eta\, \mathrm{d} \mu$  corresponds in a
sense to the effective population.  In particular, the ``strategy'' that
consists  in  vaccinating  no  one  (resp.   everybody)  corresponds  to
$\eta(x) =1$ (resp. $\eta(x) =0$) for all $x\in \Omega$.

\medskip

   For~$\eta       \in      \Delta'$,      the
kernel~$\kkk\eta=k\eta/\gamma$, defined by $\kkk(x,y)=k(x,y)
\eta(y)/\gamma(y)$,           has         finite          norm
$\norm{\cdot}_{\infty ,  q}$, so  we can  consider the  bounded positive
operators~$\Tinf_{\kkk        \eta         }$        and~$\Tinf_{k\eta}$
on~$\mathscr{L}^\infty$.                   According                  to
\cite[Section~5.3.]{delmas_infinite-dimensional_2020}, the  SIS equation
with vaccination strategy~$\eta$  is given by 
$u^\eta=(u^\eta_t, t\geq 0)$ solution to~\eqref{eq:SIS2} with~$F$
is replaced by~$F_\eta$ defined by:
\begin{equation}\label{eq:vec-field-vaccin}
  {   F_\eta(g) = (\un - g) \Tinf_{k\eta}(g) - \gamma g.}
\end{equation}
Then the quantity~$u_t^\eta(x)=u^\eta(t,x)$  represents  the  probability  for  a  non-vaccinated
individual  of feature~$x$  to be  infected at  time $t$;  so at time
$t$ among  the
population  of feature~$x$,  a  fraction $1-\eta(x)$  is  vaccinated,  a
fraction $\eta(x)\, u_t^\eta(x)$ is  not vaccinated and infected, and a
fraction $\eta(x)\, (1-u_t^\eta(x))$ is  not vaccinated and
not infected.

We define the \emph{effective reproduction number} $R_e(\eta)$
associated to the vaccination strategy $\eta$ as  the spectral radius
of the effective next-generation operator~$\Tinf_{\kkk \eta}$:
\begin{equation}\label{eq:def-R_e}
   {R_e(\eta)=\rho(\Tinf_{\kkk\eta}).}
\end{equation}
We denote by~$\mathfrak{g}_\eta$  the corresponding maximal equilibrium.
We will  denote by~$\I(\eta)  $ the  corresponding fraction  of infected
individuals at equilibrium. Since the probability for an individual with
feature~$x$    to    be    infected    in    the    stationary    regime
is~$\mathfrak{g}_\eta(x)  \, \eta(x)$,  this  fraction is  given by  the
following formula:
\begin{equation}\label{eq:asymptotic_number_endemic}
 {  \I (\eta)=\int_\Omega \mathfrak{g}_\eta
  \, \eta\, \mathrm{d}\mu.}
\end{equation}

As           $F_\eta(\mathfrak{g}_\eta)=0$,          we           deduce
that~$\mathfrak{g}_\eta\eta=0$   $\mu$-almost   surely   if   and   only
if~$\mathfrak{g}_\eta=\zero$.
Applying        the        results
of the previous section  to the  kernel~$k \eta$,  we
deduce that:
\begin{equation}
  \label{eq:gh>0}
  \I (\eta)>0 \,\Longleftrightarrow\, R_e(\eta)>1.
\end{equation}

\subsection{The set of vaccination strategies $\Delta$}
\label{sec:topo-D}
This section is motivated by the continuity results
from~\cite{ddz-theory-topo} established   for   the two  loss functions
$R_e$ and  $\I$. For~$p    \in     [1,    +\infty]$,    
the space  $L^p$   of real-valued  measurable
functions~$g$            defined~$\Omega$           such            that
$\norm{g}_p=\left(\int |g|^p  \, \mathrm{d} \mu\right)^{1/p}$  (with the
convention  that~$\norm{g}_\infty$  is the~$\mu$-essential  supremum  of
$|g|$) is  finite, where  functions which agree~$\mu$-almost  surely are
identified.
We 
denote  by $\zero$  and  $\un$ the  elements of  $L^p  $ which  are
respectively the (class of equivalence  of the) constant functions equal
to     $0$     and     to     $1$. 
  We     consider     the
  cone~$L^p_+=\{f\in L^p \, \colon\, f\geq \zero\}$ of~$L^p$; it is 
 salient pointed convex and corresponds to the 
usual partial  order $\leq  $, where $f\leq g$ means
that $\mu(f>g)=0$.

We will  mainly be interested in  the case $p=+\infty $.   With a slight
abuse   of   language   we   shall    identify   a   function   $f$   in
$\mathscr{L}^\infty$  with  its equivalent  class  in  $L^\infty $.   We
consider the set of vaccination strategies:
\begin{equation}
   \label{eq:def-D}
  \Delta=\DeltaSIS = \{f\in L^\infty\,  \colon\, \zero\leq  f\leq \un\}.
\end{equation}
  In the following we only consider this choice for
  $\Delta$, and drop the subscript SIS
  for brevity.

We now endow $L^\infty $ with  a topology for which $\Delta$ is compact.
Since the  measure $\mu$  is finite,  the topological  dual of  $L^1$ is
$L^\infty $.  Let $\co$ denote  the weak-* topology on~$L^\infty $, that
is, the weakest  topology on $L^\infty $ for which  all the linear forms
$f\mapsto \int_\Omega fg \, \rd \mu$, $g\in L^1$, defined on $L^\infty $
are  continuous. following~\cite{brezis2010functional},  we recall  that
$(L^\infty  ,  \co)$  is  an Hausdorff  topological  vector  space,
the topological dual
of   $(L^\infty  ,   \co)$  is   $L^1$, 
and   a
sequence~$(g_n,  \, n  \in  \N)$ of  elements  of~$L^\infty $  converges
weakly-*   to~$g\in   L^\infty   $   if   and   only   if:
\begin{equation}\label{eq:weak-cv}
  \lim\limits_{n \to \infty} \int_\Omega  g_n\, h \, \mathrm{d}\mu= \int_\Omega 
  g\, h\, \mathrm{d}\mu \quad\text{for all~$h \in L^1 $.}
\end{equation}
 According  to Alaoglu's  theorem, the  set $\Delta$  is compact  for the
 weak-*     topology.

\subsection{The cost function and the loss functions for the SIS model}
\label{sec:SIScostandloss}
To  any vaccination  strategy $\eta  \in  \Delta$, we  associate a  cost
$C(\eta)$  which measures  all  the costs  of  the vaccination  strategy
(production  and diffusion).  The cost  is expected  to be  a non-increasing
function of~$\eta$, since~$\eta$  encodes the non-vaccinated population.
Since doing nothing costs nothing, we also expect $C(\un)=0$.
   A simple  cost model is the
    affine cost given by:
    \begin{equation}\label{eq:def-C0}
      \costa(\eta)=\int_\Omega (\un-\eta)\, \costad \,
    \rd  \mu,
    \end{equation}
    where $\costad(x)$ is the cost  of vaccinating population of feature
    $x\in \Omega$, with $\costad\in  L^1_+$ and $\costad\neq \zero$. 
    The  particular   case
    $\costad=\un$  is    the   uniform
    cost~$C = \costu$:
    \begin{equation}\label{eq:def-C}
      \costu (\eta) = \int_\Omega (1 - \eta) \,
      \mathrm{d}\mu.
    \end{equation}
    The real cost of the vaccination  may be a more complicated function
    $\psi(\costa(\eta))$ of the affine cost, for example if the marginal
    cost  of  producing  a  vaccine  depends  on  the  quantity  already
    produced. However,  as long as  $\psi$ is strictly  increasing, this
    will not affect the optimal strategies.

\medskip

We now consider the loss  $\loss(\eta)$  which measures  the (non)-efficiency  of  the  vaccination
strategy~$\eta$. Different choices are possible, we shall concentrate on
the effective reproduction number $R_e$ and the asymptotic proportion of
infected individuals $\I$.

We say a function $H$ defined on $\Delta'$ is \emph{well defined} on $\Delta$
if   $H(\eta)=H(\eta')$ for
all $\eta, \eta'\in \Delta'$ such that $\eta=\eta'$ $\mu$-almost surely;
with a slight abuse of notation, we still denote  by $H$
the corresponding  function defined on $\Delta$.
 The     following    results     are     proved
in~\cite{ddz-theory-topo}.

\begin{proposition}[Properties of $R_e$ and $\I$]\label{prop:R_e}
  Suppose Assumption \ref{hyp:k-g} holds. 
  \begin{enumerate}[(i)]
  \item\label{prop:a.s.+increase-Re}%
    The  functions  $R_e$ and $\I$ are  well  defined  on  $\Delta$;
    they are   also
    non-decreasing, sub-homogeneous and  continuous on $\Delta$ (endowed
    with the weak-* topology).
  \item\label{prop:min_Re}%
   $R_e(\un) = R_0$ and~$R_e(\zero) = 0$.
 \item\label{prop:min-I} $\I(\un)=\I_0$ and, for $\eta\in \Delta$, we
   have  $\I (\eta)=0$  if
    and only if $R_e(\eta) \leq 1$.
  \end{enumerate}
\end{proposition}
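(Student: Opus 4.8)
The plan is to dispatch the algebraic properties quickly and to isolate weak-* continuity as the one genuinely hard point.

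First I would settle well-definedness and everything concerning $R_e$. If $\eta=\eta'$ $\mu$-almost surely, then the kernels $\kkk\eta$ and $\kkk\eta'$ coincide $\mu\otimes\mu$-almost everywhere, so the integral operators $\Tinf_{\kkk\eta}=\Tinf_{\kkk\eta'}$ are equal and the vector fields $F_\eta=F_{\eta'}$ of~\eqref{eq:vec-field-vaccin} agree; hence $R_e$ and (through its maximal equilibrium) $\I$ depend only on the class of $\eta$. The map $\eta\mapsto\kkk\eta$ is linear and, since $k\geq 0$, order-preserving: $\zero\leq\eta_1\leq\eta_2$ yields $0\leq\Tinf_{\kkk\eta_1}\leq\Tinf_{\kkk\eta_2}$ as positive operators, and as the spectral radius is monotone on positive operators (for a positive $T$ on $\cl$ one has $\norm{T^n}_{\cl}=\norm{T^n\un}_\infty$, so $0\le T_1\le T_2$ gives $\rho(T_1)\le\rho(T_2)$), we get that $R_e$ is non-decreasing. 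Moreover $\kkk(\lambda\eta)=\lambda\,\kkk\eta$ gives $\Tinf_{\kkk(\lambda\eta)}=\lambda\,\Tinf_{\kkk\eta}$, hence $R_e(\lambda\eta)=\lambda R_e(\eta)$; so $R_e$ is positively homogeneous and in particular sub-homogeneous. Part~\ref{prop:min_Re} is then immediate, since $\kkk\un=\kkk$ gives $R_e(\un)=\rho(\Tinf_\kkk)=R_0$, while $R_e(\zero)=\rho(0)=0$.

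For the properties of $\I$ I would rely on a comparison principle for the maximal equilibrium. On $\Delta'$ the vector field $F_\eta(g)=(\un-g)\Tinf_{k\eta}(g)-\gamma g$ is non-decreasing in $\eta$ for fixed $g$, because $\un-g\geq\zero$ and $\eta\mapsto\Tinf_{k\eta}(g)$ is order-preserving on non-negative $g$. Combining this with the monotonicity of the SIS flow from~\cite{delmas_infinite-dimensional_2020} (the trajectories issued from $\un$ converge to the maximal equilibrium), I obtain $\mathfrak{g}_{\eta_1}\leq\mathfrak{g}_{\eta_2}$ whenever $\eta_1\leq\eta_2$. Hence $\eta_1\mathfrak{g}_{\eta_1}\leq\eta_2\mathfrak{g}_{\eta_2}$ pointwise and $\I$ is non-decreasing. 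Sub-homogeneity follows the same way: for $\lambda\in[0,1]$ we have $\lambda\eta\leq\eta$, so $\mathfrak{g}_{\lambda\eta}\leq\mathfrak{g}_\eta$ and
\[
  \I(\lambda\eta)=\lambda\int_\Omega\eta\,\mathfrak{g}_{\lambda\eta}\,\rd\mu
  \leq\lambda\int_\Omega\eta\,\mathfrak{g}_\eta\,\rd\mu=\lambda\,\I(\eta).
\]
Finally $\I(\un)=\int_\Omega\mathfrak{g}\,\rd\mu=\I_0$ by definition, and the equivalence $\I(\eta)=0\Leftrightarrow R_e(\eta)\leq1$ in Part~\ref{prop:min-I} is exactly the contrapositive of~\eqref{eq:gh>0}.

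The hard part will be weak-* continuity, and this is where Assumption~\ref{hyp:k-g} (the bound $\norm{\kkk}_{\infty,q}<\infty$ for some $q>1$) must do the work. For a fixed $g\in\cl$ and $x\in\Omega$, the function $y\mapsto\kkk(x,y)g(y)$ lies in $L^1$ (as $\kkk(x,\cdot)\in L^q\subset L^1$ for the finite measure $\mu$ and $g$ is bounded), so $\eta_n\to\eta$ weak-* gives $\Tinf_{\kkk\eta_n}(g)(x)\to\Tinf_{\kkk\eta}(g)(x)$ pointwise. The uniform integrability supplied by the $L^q$ bound is what I would use to upgrade this into the collectively-compact/strong operator convergence needed to pass to the limit in the \emph{nonlinear} maps $\eta\mapsto\rho(\Tinf_{\kkk\eta})$ and $\eta\mapsto\mathfrak{g}_\eta$: weak-* convergence alone does not control a spectral radius or a nonlinear fixed point. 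Concretely, for $R_e$ I would combine this operator convergence with a Collatz--Wielandt/Krein--Rutman characterization of the spectral radius to get both upper and lower semicontinuity; for $\I$, I would first show $\mathfrak{g}_{\eta_n}\to\mathfrak{g}_\eta$ by using the characterization of $\mathfrak{g}_\eta$ as the maximal fixed point and the compactness from Assumption~\ref{hyp:k-g} to pass to the limit in the equilibrium equation, and then pass to the limit in $\I(\eta_n)=\int_\Omega\eta_n\,\mathfrak{g}_{\eta_n}\,\rd\mu$ (pairing weak-* convergence of $\eta_n$ with strong convergence of $\mathfrak{g}_{\eta_n}$). Orchestrating this compactness is the delicate step; it is carried out in the companion paper~\cite{ddz-theory-topo}.
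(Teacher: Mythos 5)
The paper does not actually prove Proposition~\ref{prop:R_e}: it states immediately before it that ``the following results are proved in~\cite{ddz-theory-topo}'', so the official argument is entirely a citation to the companion paper. Your proposal therefore goes further than the paper on the elementary points, and what you write there is correct: well-definedness because $\kkk\eta$ and the vector field $F_\eta$ only see the $\mu$-class of $\eta$; monotonicity of $R_e$ via $\norm{T^n}_{\cl}=\norm{T^n\un}_\infty$ for positive operators; exact positive homogeneity of $R_e$ (hence sub-homogeneity) from $\Tinf_{\kkk(\lambda\eta)}=\lambda\Tinf_{\kkk\eta}$; the comparison principle for the maximal equilibrium giving monotonicity and sub-homogeneity of $\I$ (this does require the quasi-monotonicity of the flow from \cite{delmas_infinite-dimensional_2020}, which you correctly invoke); and Part~\ref{prop:min-I} as the contrapositive of~\eqref{eq:gh>0}. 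On the one genuinely hard point --- weak-* continuity of $R_e$ and $\I$ --- you correctly diagnose that weak-* convergence of $\eta_n$ alone controls neither a spectral radius nor a nonlinear fixed point, and that the $\norm{\cdot}_{\infty,q}$ bound of Assumption~\ref{hyp:k-g} must be converted into operator compactness before a Collatz--Wielandt-type argument can close the loop; but you do not carry this out and instead defer to~\cite{ddz-theory-topo}, exactly as the paper does. So the proposal is consistent with the paper and sound as far as it goes; just be aware that the continuity statement, which is the crux of the proposition, remains imported rather than proved in both your write-up and the paper itself.
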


\begin{remark}[On the generality of the uniform cost]
  \label{rem:costa-costu}
  For the  reproduction number  optimization in the  vaccination context
  (that is,  $\loss=R_e$), one can  without loss of  generality consider
  the  uniform cost instead  of  the affine  cost.
  Indeed, consider  the SIS model with parameters  $\param=[(\Omega,
  \cf, \mu), k, \gamma]$
  with  the affine  cost function  $\costa$ and  the loss  $R_e$. If  we
  assume furthermore that  $\costad$ is bounded and bounded  away from 0
  (that is $\costad$  and $1/\costad$ belongs to  $L^\infty _+$), and, without
  loss of generality,  that $\int_\Omega \costad \, \mathrm{d}  \mu=1$, then we
  can       consider        the       weighted        kernel       model
  $\param_0=[(\Omega,   \cf,  \mu_0),   k_0, \gamma]$   with
  probability   measure
  $\mu_0(\mathrm{d}  x)=\costad(x)  \,  \mu(\mathrm{d}  x)$  and  kernel
  $k_0= k/\costad$.  (Notice  that if Assumption~\ref{hyp:k-g} holds
  for the model $\param$, then it  also holds for the model $\param_0$.)
  Consider  the  loss  $\loss=R_e$.   Then, using  that  $L^p(\mu)$  and
  $L^p(\mu_0)$ are  compatible and the corresponding  integral operators
  are   consistent   (see~\cite[Section~2.2]{ddz-Re}),   we   get   that
  $(\costa(\eta),  \loss (\eta))$  for the  model $\param$  is equal  to
  $(\costu(\eta),  \loss  (\eta))$ for  the  model  $\param_0$, for  all
  strategies $\eta\in \Delta$.

  Therefore, for  the loss function  $\loss=R_e$, instead of  the affine
  cost $\costa$,  one can consider  without any real loss  of generality
  the uniform cost.  However, this is  no longer the
  case for the loss function $\loss=\I$.
\end{remark}

\subsection{Irreducible and monatomic kernels}
\label{sec:prop-L-atomic}

We  follow the  presentation in~\cite[Section~5]{ddz-Re}  on the  atomic
decomposition of  positive compact operator  and Remark 5.2  therein for
the  particular case  of  integral operators,  see  also the  references
therein for further  results.  For $A, B\in \cf$, we  write $A\subset B$
a.s.\ if $\mu(B^c  \cap A)=0$ and $A=B$ a.s.\ if  $A\subset B$ a.s.\ and
$B\subset A$ a.s..   If $\cg\subset \cf$ is a  $\sigma$-field, we recall
that $A\in \cg$ is  an atom of $\mu$ in $\cg$ if  $\mu(A)>0$ and for any
$B\subset A$  we have a.s.\  either $B=\emptyset$ or $B=A$.  Notice that
the atoms are defined up to an a.s.\ equivalence.

Let $\kk$ be a kernel on $\Omega$ such that
$\norm{\kk}_{\infty ,q}<+\infty $ for some $q\in (1, +\infty )$.  For  $A, B\in  \cf$, we  simply write:
\[
  \kk(B, A)=
  \int_{B \times A} \kk(z,y)\, \mu(\rd z) \mu(\rd y).
\]
A   set  $A\in   \cf$  is   called  \emph{$\kk$-invariant},   or  simply
\emph{invariant}  when there  is no  ambiguity on  the kernel  $\kk$, if
$\kk(A^c,  A)=0$.   In  the  epidemiological setting with $\kk$ the
transmission rate kernel,  the  set  $A$  is
invariant if the  sub-population $A$ does not  infect the sub-population
$A^c$.  The kernel $\kk$  is \emph{irreducible} (or \emph{connected}) if
any invariant set $A$ is  equal a.s.\ either to $\emptyset$ or to 
$\Omega$.  If $\kk$
is irreducible,  then either $\rho(\Tinf_\kk)>0$
or $\kk\equiv 0$ and $\Omega$ is
an  atom  of $\mu$  in  $\cf$  (degenerate  case). A  simple  sufficient
condition for irreducibility is for the kernel to be a.s.\ positive.

\medskip

Let $\ca$ be  the set of $\kk$-invariant sets. Let  us stress that $\ca$
depends only on  the support of the kernel $\kk$.   Notice that $\ca$ is
stable   by  countable   unions   and   countable  intersections.    Let
$\cfi=\sigma(\ca)$ be the $\sigma$-field  generated by $\ca$.  Then, the
kernel $\kk$  restricted to an atom  of $\mu$ in $\cfi$  is irreducible.
We shall only consider non degenerate  atoms, and say the atom (of $\mu$
in $\cfi$)  is non-zero if the  restriction of the kernel  $\kk$ to this
atom  is non-zero  (and thus  the spectral  radius of  the corresponding
integral  operator   is  positive).    We  say   the  kernel   $\kk$  is
\emph{monatomic} if there exists a  unique non-zero atom, say $\oa$, and
the  kernel   is  \emph{quasi-irreducible}   if  it  is   monatomic  and
$\kk\equiv 0$ outside $\oa\times \oa$, where $\oa$ is its non-zero atom.
Notice that if $\kk$ is irreducible with $\rho(\Tinf_\kk)>0$, then $\kk$
is monatomic with non-zero atom $\oa=\Omega$ a.s.. The quasi-irreducible
property  is the  usual extension  of  the irreducible  property in  the
setting of symmetric kernels; and  the monatomic property is the natural
generalization        to        non-symmetric        kernels,        see
also~\cite{dlz-atom,dlz-equilibre}   for   other  characterizations   of
monatomic kernels.

\begin{remark}[Epidemiological interpretation]\label{rem:lien-epidemie}
   When the transmission rate  kernel $k$ for   the    SIS   model
   from Section~\ref{sec:sis-model}
   is  monatomic, with
   non-zero atom $\oa$,  then the population with trait in  $\oa$ can infect
   itself. It may also infect another part of the population, say with
   trait in $\oi$, but:
   \begin{itemize}
   \item the infection cannot be sustained at all in $\oi$: $k$ is
     quasi-nilpotent on $\oi$;
     \item the population with trait in $\oi$ does not infect
       back the non-zero atom $\oa$.  
     \end{itemize}
Recall the reproduction  number $R_0$ defined in~\eqref{eq:def-R0-2}.       If  furthermore  $R_0>1$, then  the  set  $\oa\cup\oi$
   corresponds  to  the  support  of  the  maximal  endemic
   equilibrium, which is also the unique non-zero equilibrium,
   see~\cite{ddz-theory-topo}. 
\end{remark}

\subsection{Properties of the cost and loss functions}
\label{sec:prop-C+L}

We shall now consider the natural loss functions $R_e$ and $\I$.
Recall  $\Delta$ defined by~\eqref{eq:def-D}
is endowed   with  the   weak-*
topology.

\begin{proposition}[Loss function properties for the SIS model]
  \label{prop:prop-loss}
  Suppose Assumption~\ref{hyp:k-g}  on the SIS model  holds and consider
  the loss function $\loss\in \{R_e, \I\}$ on $\Delta$.
  \begin{enumerate}[(i)]
  \item \label{it:thm-loss+cost}  The loss function is non-decreasing and
    continuous           with          $\loss(\zero)=0$.            Thus
    Assumption~\ref{hyp:loss+cost}  on  the  loss  holds  for  the  loss
    $\loss=R_e$ provided  that $R_0>0$ (and $\maxloss=R_0>0$)  and for the
    loss     $\loss=\I$    provided     that     $R_0>1$    (and 
    $\maxloss=\I_0>0$).
\item \label{it:thm-loss} Any local minimum of the loss function $\loss$ is global, that
  is, Assumption~\ref{hyp:loss}  holds. 
\item \label{it:thm-loss**}
  If the
  transmission rate kernel  is  monatomic,   then
$R_0>0$ and any  local maximum of the loss function $\loss=R_e$ is
global (that is, Assumption~\ref{hyp:loss**} holds for $\loss=R_e$).
If furthermore $R_0>1$,  then
  any local maximum of the loss function $\loss=\I$, which is not a zero
  of the loss, is global (that is, Assumption~\ref{hyp:loss=I} holds for $\loss=\I$).
  \end{enumerate} 
\end{proposition}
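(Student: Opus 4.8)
The plan is to read items~\ref{it:thm-loss+cost} and~\ref{it:thm-loss} straight off the already-quoted Proposition~\ref{prop:R_e}, and to concentrate the work on item~\ref{it:thm-loss**}, where monatomicity enters through a strict-monotonicity argument confined to the non-zero atom. For~\ref{it:thm-loss+cost}: Proposition~\ref{prop:R_e}\ref{prop:a.s.+increase-Re} already gives that $R_e$ and $\I$ are well defined, non-decreasing and continuous on $\Delta$, while Proposition~\ref{prop:R_e}\ref{prop:min_Re} gives $R_e(\zero)=0$; since $R_e(\zero)=0\leq 1$, Proposition~\ref{prop:R_e}\ref{prop:min-I} yields $\I(\zero)=0$. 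As $\loss$ is non-decreasing with $\un=\max\Delta$, we have $\maxloss=\loss(\un)$, equal to $R_0$ for $\loss=R_e$ and to $\I_0$ for $\loss=\I$; the non-degeneracy $\maxloss>0$ then amounts to $R_0>0$, respectively $\I_0>0$, the latter being equivalent to $R_0>1$ by~\eqref{eq:gh>0}. For~\ref{it:thm-loss}, the loss is sub-homogeneous by Proposition~\ref{prop:R_e}\ref{prop:a.s.+increase-Re}, so Lemma~\ref{lem:c-dec+L-hom} applies directly and Assumption~\ref{hyp:loss} holds.

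For~\ref{it:thm-loss**} with $\loss=R_e$, let $\oa$ be the non-zero atom. Since $\gamma>0$, the kernels $k$ and $\kkk=k/\gamma$ have the same support and hence the same invariant sets, so $\kkk$ is monatomic with atom $\oa$, on which it is irreducible with positive spectral radius; as $\oa$ is invariant, the atomic decomposition of Section~\ref{sec:prop-L-atomic} gives $R_0=\rho(\Tinf_{\kkk})=\rho(\Tinf_{\kkk}|_{\oa})>0$, proving the first assertion. Now let $\eta^\star$ be a local maximum of $R_e$ and consider the segment $\eta_t=(1-t)\eta^\star+t\un=\eta^\star+t(\un-\eta^\star)$, which lies in $\Delta$ and is weak-* continuous in $t$; for $t$ small enough $\eta_t$ lies in any prescribed weak-* neighborhood of $\eta^\star$, so the local-maximum property together with monotonicity of $R_e$ forces $R_e(\eta_t)=R_e(\eta^\star)$ on some interval $t\in[0,\delta]$. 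If $R_e(\eta^\star)<R_0$ then necessarily $\eta^\star<\un$ on a positive-measure subset of $\oa$ (otherwise $\eta^\star=\un$ a.e.\ on $\oa$ and $R_e(\eta^\star)=\rho(\Tinf_{\kkk}|_{\oa})=R_0$). For $t>0$ we have $\eta_t\geq t>0$, so $\kkk\eta_t$ has the same support as $\kkk$ on $\oa$ and the operator $B=\Tinf_{\kkk\eta_t}|_{\oa}$ is irreducible; moreover $A=\Tinf_{\kkk\eta^\star}|_{\oa}\leq B$ with $A\neq B$. By the classical strict monotonicity of the Perron root --- if $0\leq A\leq B$ with $B$ irreducible and $A\neq B$ then $\rho(A)<\rho(B)$, in its Krein--Rutman form --- we obtain $R_e(\eta_t)>R_e(\eta^\star)$ for every $t>0$, contradicting constancy on $[0,\delta]$. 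Hence $R_e(\eta^\star)=R_0$ and the local maximum is global.

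The case $\loss=\I$ (with $R_0>1$) runs along the same segment, but the key analytic input --- and the main obstacle --- is the strict monotonicity of the nonlinear object $\I$ rather than of a spectral radius. Given a local maximum $\eta^\star$ with $\I(\eta^\star)>0$, equivalently $R_e(\eta^\star)>1$ by~\eqref{eq:gh>0}, the segment argument again yields $\I(\eta_t)=\I(\eta^\star)$ on some $[0,\delta]$. Assuming $\I(\eta^\star)<\I_0$, one would show that $\eta^\star<\un$ on a positive-measure subset of the support of the endemic equilibrium $\mathfrak{g}_{\eta^\star}$, and that increasing $\eta$ strictly there strictly increases the maximal equilibrium $\mathfrak{g}_\eta$, hence the quantity $\I(\eta)=\int_\Omega\mathfrak{g}_\eta\,\eta\,\rd\mu$, along the segment; this gives $\I(\eta_t)>\I(\eta^\star)$ for $t>0$ and the desired contradiction, so $\I(\eta^\star)=\I_0$ and Assumption~\ref{hyp:loss=I} holds. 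The heart of the matter is thus a strong-maximum-principle/Krein--Rutman argument applied to the fixed-point relation $F_\eta(\mathfrak{g}_\eta)=0$ on the irreducible atom, yielding strict monotonicity of the equilibrium; I would either derive it there or invoke the corresponding monotonicity of the maximal equilibrium established in~\cite{delmas_infinite-dimensional_2020} and the references gathered in Section~\ref{sec:prop-L-atomic}. I expect this nonlinear strict monotonicity, and not the spectral step used for $R_e$, to be the genuinely delicate point of the proof.
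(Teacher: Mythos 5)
Your treatment of items~\ref{it:thm-loss+cost} and~\ref{it:thm-loss} coincides with the paper's: both are read off Proposition~\ref{prop:R_e} (continuity, monotonicity, sub-homogeneity, the values at $\zero$ and $\un$) together with Lemma~\ref{lem:c-dec+L-hom}. For item~\ref{it:thm-loss**} the paper gives no argument at all --- it simply cites Lemmas~5.4 and~5.5 of the companion paper \cite{ddz-theory-topo} --- so you are attempting strictly more than the text does. Your $R_e$ argument is a sound strategy and almost certainly the one behind the cited lemma: move along $\eta_t=\eta^\star+t(\un-\eta^\star)$, use monotonicity plus the local-maximum property to force constancy of $R_e$ on a small interval, and contradict this by strict monotonicity of the spectral radius on the irreducible atom. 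Two points there are stated too quickly but are repairable: (a) the identity $R_e(\eta)=\rho(\Tinf_{\kkk\eta}|_{\oa})$ for \emph{all} $\eta$ needs the observation that $\oa$ is $\kkk\eta$-invariant and that the restriction of $\kkk$ (hence of $\kkk\eta$) to $\oa^c$ is quasi-nilpotent in the monatomic case; (b) to get $A\neq B$ you must check $\kkk(\oa,S)>0$ for $S=\{\eta^\star<\un\}\cap\oa$, which follows from irreducibility of $\kkk|_{\oa}$ (otherwise $S$ would be invariant in $\oa$); and (c) the ``classical'' strict comparison $\rho(A)<\rho(B)$ for $0\leq A\leq B$, $A\neq B$, $B$ irreducible is a genuinely infinite-dimensional statement here, valid only with the power-compactness/band-irreducibility framework that the companion papers set up under $\norm{\kkk}_{\infty,q}<+\infty$; invoking it ``in its Krein--Rutman form'' is acceptable but is exactly where the real content of \cite[Lemma~5.4]{ddz-theory-topo} lives.

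The genuine gap is the case $\loss=\I$. There your text is a plan, not a proof, and you say so yourself: the assertion that increasing $\eta$ on a suitable positive-measure set strictly increases $\mathfrak{g}_\eta$, hence $\I(\eta)=\int_\Omega\mathfrak{g}_\eta\,\eta\,\rd\mu$, is precisely the content of \cite[Lemma~5.5]{ddz-theory-topo} and cannot be waved at; it requires a localized strong-comparison argument for the fixed-point equation $F_\eta(\mathfrak{g}_\eta)=0$ on the atom, and it is not a formal consequence of the monotonicity results recalled in this paper. Moreover the set you propose to work on is likely wrong: the global maxima of $\I$ are characterized (see the proof of Proposition~\ref{prop:L**}) as $\{\eta\geq\ind{\{\mathfrak{g}>0\}}\}$, so if $\I(\eta^\star)<\I_0$ what you know is that $\eta^\star<\un$ on a positive-measure subset of $\{\mathfrak{g}>0\}=\oa\cup\oi$, not of the support of $\mathfrak{g}_{\eta^\star}$; these sets differ in general (the latter may even be empty near criticality while the former is not), and the perturbation argument must be run on the part of $\{\mathfrak{g}>0\}$ where $\eta^\star<\un$, distinguishing the atom $\oa$ from the quasi-nilpotent part $\oi$ it feeds. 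As written, the $\I$ half of item~\ref{it:thm-loss**} is not established.
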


\begin{proof}
  We  deduce  from  Proposition~\ref{prop:R_e} that  the  loss  function
  $\loss=R_e$           (resp.           $\loss=\I)$           satisfies
  Assumption~\ref{hyp:loss+cost}, provided that $R_0>0$ (resp.  $R_0>1$)
  so that $\maxloss>0$. This gives Point~\ref{it:thm-loss+cost}. 
  Then  use Lemma~\ref{lem:c-dec+L-hom}
  and    Proposition~\ref{prop:R_e}   again    to   get
Point~\ref{it:thm-loss}, that is, 
Assumption~\ref{hyp:loss} holds.
Point~\ref{it:thm-loss**} is in~\cite[Lemmas~5.4 and~5.5]{ddz-theory-topo}. 
\end{proof}

\begin{remark}[Local maxima of the loss $\I$]
  \label{rem:local-max-I}
  The    loss     $\loss=\I$    does    not    satisfy
Assumption~\ref{hyp:loss**}  in general  even   when    the   kernel    $k$   is
irreducible with $R_0>0$.  Indeed, by continuity of $R_e$, there exists a
(weak-*) open neighborhood~$O$ of $\zero$ in $\Delta$ such that $R_e(\eta) < 1$ for
all $\eta\in O$: consequently $\I$ is identically zero
on~$O$, and any $\eta\in O$ is a local maximum of
$\loss=\I$. However, these maxima are not global if~$\I(\un)>0$ (that
is,  $R_0>1$).
\end{remark}

We  give further  results  on the  ``worst''  vaccination strategy  that
changes nothing.  Those results are  stated assuming the  transmission rate
kernel  is monatomic.   The  general case  is more  delicate  and it  is
studied  in more  details in~\cite[Section~5]{ddz-cordon}  for the  loss
function $\loss=R_e$; in  particular Assumption~\ref{hyp:loss**} may not
hold and  the anti-Pareto  frontier may not  be connected.

Recall that
$\cplus=\Csup(0+)$, see~\eqref{eq:def-c+}.

\begin{proposition}[On the ``worst'' vaccination strategy]
\label{prop:L**}
Suppose  Assumption~\ref{hyp:k-g}  on  the  SIS model  holds,  that  the
transmission rate kernel is monatomic (and thus $R_0>0)$ with atom $\oa$
and  maximal equilibrium  $\mathfrak{g}$.   Suppose also  that the  cost
function satisfies Assumption~\ref{hyp:loss+cost}.
\begin{enumerate}[(i)]
\item\label{it:irr}
  If  $k$ is irreducible (with $R_0>0$), then we have  $\Csup
     (\maxloss)=0$ for the loss $\loss=R_e$ and also for the loss $\loss=\I$
provided $R_0>1$. 
   \item\label{it:oa}     
     For the  loss $\loss=R_e$, the strategy~$\ind{\oa}$, which vaccinates everyone
     except the atom,  is anti-Pareto optimal, with cost  $C(\ind{\oa})=\Csup
     (\maxloss)$. 
   \item \label{it:g} If $R_0>1$, for the  loss $\loss=\I$,
     the strategy
     $\ind{\{\mathfrak{g}>0\}}$, which consists in
vaccinating only the types
     that would not be infected at equilibrium, 
     is      anti-Pareto   optimal with
     $\Csup (\maxloss)=C(\ind{\{ \mathfrak{g}>0\}})$. 
     Furthermore, if the cost
  function $C$ is decreasing, then we have $\cplus<\maxcost$.
\end{enumerate}
\end{proposition}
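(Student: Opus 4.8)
The plan is to establish (ii) and (iii) directly and obtain (i) as the special irreducible case. In all three the common strategy is the same: exhibit a strategy $\eta_0$ realizing the global loss maximum $\maxloss=\loss(\un)$, show it is the $\preceq$-smallest strategy with this property, deduce $C(\eta_0)=\Csup(\maxloss)$ since $C$ is non-increasing, and conclude anti-Pareto optimality via Proposition~\ref{thm:single-bi-anti}~\ref{single-bi-anti}, noting that $\eta_0$ then solves \eqref{eq:Prob2**} for $\ell=\maxloss$ and \eqref{eq:Prob1**} for $c=C(\eta_0)$ (both trivially, once $\eta_0$ attains the largest possible loss with the largest possible cost).

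For (ii), with $\loss=R_e$ and $\eta_0=\ind{\oa}$, the crux is the monatomic block structure of Remark~\ref{rem:lien-epidemie}: since $\oi$ does not infect $\oa$ and $\kkk$ is quasi-nilpotent on $\oi$, the operator $\Tinf_{\kkk\eta}$ is block lower-triangular along the splitting at $\oa$, so that $R_e(\eta)=\rho(\Tinf_{\kkk\eta})$ equals the spectral radius of the restriction of $\kkk\eta$ to $\oa\times\oa$ and thus depends on $\eta$ only through $\eta|_{\oa}$; I would cite the atomic decomposition of \cite[Section~5]{ddz-Re} to make this rigorous. Taking $\eta|_{\oa}\equiv 1$ gives $R_e(\ind{\oa})=\rho(\Tinf_{\kkk})=R_0=\maxloss$, and conversely, as $\kkk$ restricted to the atom is irreducible, strict monotonicity of the spectral radius shows that $R_e(\eta)=R_0$ implies $\eta=\un$ $\mu$-a.e.\ on $\oa$, that is $\ind{\oa}\preceq\eta$. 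Hence $\ind{\oa}$ is the smallest loss-maximizer and $C(\ind{\oa})=\Csup(\maxloss)$.

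For (iii), with $\loss=\I$ and $\eta_0=\ind{\{\mathfrak{g}>0\}}$, I would avoid any spectral computation and instead use the monotonicity of the maximal equilibrium from \cite{delmas_infinite-dimensional_2020}: since $\eta\preceq\un$ gives $k\eta\le k$ and hence $\mathfrak{g}_\eta\le\mathfrak{g}$, any $\eta$ with $\I(\eta)=\I_0=\maxloss$ satisfies the squeeze
\[
  \I_0=\int_\Omega \mathfrak{g}_\eta\,\eta\,\rd\mu \le \int_\Omega \mathfrak{g}\,\eta\,\rd\mu \le \int_\Omega \mathfrak{g}\,\rd\mu=\I_0,
\]
so both inequalities are equalities; the second reads $\int_\Omega \mathfrak{g}(\un-\eta)\,\rd\mu=0$ with non-negative integrand, forcing $\eta=\un$ $\mu$-a.e.\ on $\{\mathfrak{g}>0\}$, i.e.\ $\ind{\{\mathfrak{g}>0\}}\preceq\eta$. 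That $\ind{\{\mathfrak{g}>0\}}$ itself attains $\I=\I_0$ follows because $\mathfrak{g}$ (being supported on $\{\mathfrak{g}>0\}$) is an equilibrium of the $\ind{\{\mathfrak{g}>0\}}$-dynamics, so by maximality $\mathfrak{g}\le\mathfrak{g}_{\ind{\{\mathfrak{g}>0\}}}$, giving $\I(\ind{\{\mathfrak{g}>0\}})\ge\int_\Omega\mathfrak{g}\,\rd\mu=\I_0$, while $\I(\ind{\{\mathfrak{g}>0\}})\le\I(\un)=\I_0$ by monotonicity. Thus $\ind{\{\mathfrak{g}>0\}}$ is the smallest loss-maximizer and $C(\ind{\{\mathfrak{g}>0\}})=\Csup(\maxloss)$. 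For the final assertion, when $C$ is decreasing I would invoke Lemma~\ref{lem:c0<cmax}: by \eqref{eq:gh>0} and continuity of $R_e$ with $R_e(\zero)=0<1$, the set $\{\I=0\}=\{R_e\le 1\}$ contains a weak-$*$ neighbourhood of $\zero$, whence $\cplus<\maxcost$.

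Finally, (i) is the specialization $\oa=\Omega$ (resp.\ $\{\mathfrak{g}>0\}=\Omega$) valid when $k$ is irreducible with $R_0>0$ (resp.\ $R_0>1$), giving $\ind{\oa}=\un$ (resp.\ $\ind{\{\mathfrak{g}>0\}}=\un$) and hence $\Csup(\maxloss)=C(\un)=0$. The main obstacle is the spectral step in (ii): justifying rigorously, in the infinite-dimensional setting, that the monatomic structure reduces $R_e(\eta)$ to the spectral radius on the atom and that strict monotonicity pins down the maximizers. This is precisely where the monatomic hypothesis is essential; in the non-monatomic case the conclusion genuinely fails, cf.\ \cite[Section~5]{ddz-cordon}.
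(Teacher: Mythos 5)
Your proof is correct and arrives at the same conclusions, but it is deliberately more self-contained than the paper's, which is essentially a two-line deduction from results cited out of the companion paper: the paper invokes \cite[Lemma~5.4.(iii)]{ddz-theory-topo} for the fact that the global maximizers of $R_e$ are exactly $\{\eta\, \colon\, \eta\geq \ind{\oa}\}$, and \cite[Lemma~5.5.(iii)]{ddz-theory-topo} for the fact that (when $R_0>1$) the global maximizers of $\I$ are exactly $\{\eta \,\colon\, \eta\geq \ind{\{\mathfrak{g}>0\}}\}$; everything downstream of these characterizations --- reading off $\Csup(\maxloss)=C(\eta_0)$ from the monotonicity of $C$, concluding anti-Pareto optimality via Proposition~\ref{thm:single-bi-anti}, getting $\cplus<\maxcost$ from Lemma~\ref{lem:c0<cmax} combined with Remark~\ref{rem:local-max-I}, and reducing~(i) to (ii)--(iii) via $\oa=\Omega$ a.s.\ --- is identical to what you do. Where you genuinely diverge is in proving the two characterizations yourself. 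Your argument for $\loss=\I$ is complete and elementary: the squeeze $\I_0=\int_\Omega \mathfrak{g}_\eta\eta\,\rd\mu\leq \int_\Omega \mathfrak{g}\eta\,\rd\mu\leq \int_\Omega \mathfrak{g}\,\rd\mu=\I_0$ (using $\mathfrak{g}_\eta\leq\mathfrak{g}$), together with the observation that $\mathfrak{g}$ is an equilibrium of the $\ind{\{\mathfrak{g}>0\}}$-dynamics because $\Tinf_{k\ind{\{\mathfrak{g}>0\}}}(\mathfrak{g})=\Tinf_k(\mathfrak{g})$, replaces the citation with a transparent two-step proof, and is arguably the nicer route. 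Your argument for $\loss=R_e$ correctly isolates the two facts needed --- block-triangularity of $\Tinf_{\kkk\eta}$ along the atomic decomposition, so that $R_e(\eta)$ only sees $\eta|_{\oa}$, and strict monotonicity of the spectral radius for the irreducible restriction to $\oa$ --- but, as you acknowledge yourself, these are precisely the nontrivial infinite-dimensional spectral statements that \cite[Lemma~5.4.(iii)]{ddz-theory-topo} packages (the atomic decomposition of \cite[Section~5]{ddz-Re} alone does not give you the strict-monotonicity step), so for that item your proof is a sound sketch resting on the same external input as the paper's rather than an independent argument.
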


According    to    Remark~\ref{rem:lien-epidemie}, if $R_0>1$,      we    have    that
$\ind{\oa}\leq  \ind{\{\mathfrak{g}>0\}}$,  with   an  equality  if  the
transmission rate kernel is quasi-irreducible.

\begin{proof}
  According       to~\cite[Lemma~5.4.(iii)]{ddz-theory-topo},       Assumption
  \ref{hyp:loss**} holds, $R_0>0$, and $\eta$ is a global maximum if and
  only if $\eta\geq \ind{\oa}$. 
  This directly implies that $ \ind{\oa}$
  is a global maximum of $R_e$. Assuming that the cost function satisfies
  Assumption~\ref{hyp:loss+cost}, we also get that $\Csup (\maxloss)=C(\ind{\oa})$ and
  also that $ \ind{\oa}$ is anti-Pareto optimal.  This gives
  Point~\ref{it:oa}.

  According to~\cite[Lemma~5.5.(iii)]{ddz-theory-topo}, if  $R_0>1$, and
  thus $\I(\un)>0$, then  Assumption~\ref{hyp:loss=I}  holds and $\eta$  is a
  global maximum  if and  only if $\eta\geq  \ind{\{ \mathfrak{g}>0\}}$.
  Arguing as  above, we get  the first part of  Point~\ref{it:g}.
  Thanks to Lemmas~\ref{lem:c0<cmax} and Remark~\ref{rem:local-max-I}, we
  get that 
  $\cplus < \maxcost$.   This ends the proof of  Point~\ref{it:g}.

  To  get Point~\ref{it:irr},  notice  that when  the transmission  rate
  kernel is  irreducible and $R_0>0$  then a.s.\ $\oa=\Omega$  and thus,
  thanks to  Remark~\ref{rem:lien-epidemie}, $\ind{\oa}= \un$.  Then use
  that  $C(\un)=0$  as  well  as  Points~\ref{it:oa}  and~\ref{it:g}  to
  conclude for the  loss $\loss=R_e$. The proof for  the loss $\loss=\I$
  is similar using   that
$\ind{\oa}\leq  \ind{\{\mathfrak{g}>0\}}$  when $R_0>1$. 
\end{proof}

The following result is an immediate consequence of
Lemmas~\ref{lem:c-dec+L-hom} and~\ref{lem:c-dec+L-hom**}.

\begin{corollary}[The affine cost function]
  \label{cor:prop-aff}
  The affine  cost function $\costa$ from~\eqref{eq:def-C0}  is continuous
on  $\Delta$  (endowed with  the  weak-*  topology) and  thus  satisfies
Assumptions~\ref{hyp:loss+cost},   \ref{hyp:cost}  and~\ref{hyp:cost**}.

If
furthermore $\costad$  is positive  (which is the  case for  the uniform
cost function $\costu$), then the cost is also decreasing and thus,  under
Assumption~\ref{hyp:k-g}  on  the  SIS model, 
$\lossinf(0)=\maxloss$  and $\lossup(\maxcost)=0$ 
for the loss $\loss=R_e$ provided $R_0>0$ and for the loss $\loss=\I$
provided $R_0>1$. 
\end{corollary}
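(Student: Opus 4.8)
The plan is to verify directly the hypotheses of Lemmas~\ref{lem:c-dec+L-hom} and~\ref{lem:c-dec+L-hom**}, so that the statement follows at once (this is why it is phrased as an immediate consequence). First I would establish weak-* continuity of $\costa$. Writing $\costa(\eta)=\int_\Omega \costad\,\rd\mu-\int_\Omega \eta\,\costad\,\rd\mu$, the first term is a constant and the second is, by the very definition of the weak-* topology on $L^\infty$ (recall $\costad\in L^1_+$, so $\costad$ is an admissible test function), a continuous linear form of $\eta$; hence $\costa$ is continuous on $\Delta$.

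Next I would check that $\costa$ satisfies the cost part of Assumption~\ref{hyp:loss+cost}. The equality $\costa(\un)=\int_\Omega(\un-\un)\,\costad\,\rd\mu=0$ is immediate. Monotonicity follows from $\costad\geq\zero$: if $\eta_1\preceq\eta_2$, then $\un-\eta_1\geq\un-\eta_2$ a.s., so $\costa(\eta_1)\geq\costa(\eta_2)$, i.e.\ $\costa$ is non-increasing. Finally $\maxcost=\costa(\zero)=\int_\Omega\costad\,\rd\mu>0$ since $\costad\in L^1_+$ is non-zero, so the cost is non-degenerate. Since an affine function is simultaneously convex and concave, Lemma~\ref{lem:c-dec+L-hom} then gives Assumption~\ref{hyp:cost} and Lemma~\ref{lem:c-dec+L-hom**} gives Assumption~\ref{hyp:cost**}.

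For the second part, assume $\costad>0$ a.e. I would check that $\costa$ is then \emph{decreasing}: if $\eta_1\prec\eta_2$, then $\eta_2-\eta_1\geq\zero$ a.s.\ and is positive on a set of positive measure, so $\costa(\eta_1)-\costa(\eta_2)=\int_\Omega(\eta_2-\eta_1)\,\costad\,\rd\mu>0$. The decreasing case of Lemma~\ref{lem:c-dec+L-hom} then yields $\lossinf(0)=\maxloss$ and the decreasing case of Lemma~\ref{lem:c-dec+L-hom**} yields $\lossup(\maxcost)=0$, \emph{provided} the loss part of Assumption~\ref{hyp:loss+cost} also holds. This is the only place where the SIS model enters: by Proposition~\ref{prop:prop-loss}~\ref{it:thm-loss+cost}, the loss part of Assumption~\ref{hyp:loss+cost} holds for $\loss=R_e$ as soon as $R_0>0$ and for $\loss=\I$ as soon as $R_0>1$, which are exactly the stated hypotheses.

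I do not expect a genuine obstacle here, since the content is contained in the two lemmas; the only points requiring a little care are, first, the strictness in the decreasing case, where one must combine $\eta_1\neq\eta_2$ in $L^\infty$ (so $\mu(\eta_1<\eta_2)>0$) with $\costad>0$ a.e.\ to get a strict inequality — the weaker $\costad\geq\zero$ would only give non-increase; and second, the weak-* continuity, which, although routine, genuinely uses $\costad\in L^1$ and is worth isolating.
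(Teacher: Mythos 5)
Your proposal is correct and follows exactly the route the paper intends: the paper presents this corollary as an immediate consequence of Lemmas~\ref{lem:c-dec+L-hom} and~\ref{lem:c-dec+L-hom**}, and you verify their hypotheses (weak-* continuity via $\costad\in L^1$, the cost part of Assumption~\ref{hyp:loss+cost}, convexity and concavity of the affine cost, and strict monotonicity when $\costad>0$) before invoking them. The two points you flag as needing care — strictness of the decrease and the role of $\costad\in L^1$ in the weak-* continuity — are precisely the details the paper leaves to the reader, and you handle them correctly.
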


In      view       of      Lemma~\ref{lem:5-et-5'}~\ref{it:5'=}      and
Remark~\ref{rem:5-et-5'-SIS} on the links between Assumptions~\ref{hyp:loss**}
and~\ref{hyp:loss=I}, we state the following technical lemma. 

\begin{lemma}[On the continuity of $\eta  \mapsto \eta  \vee \eta'$]
  \label{lem:cont}
  Let $\eta'\in \Delta$. The function $G(\eta)= \eta  \vee
  \eta'$ defined on $\Delta$ is continuous at $\eta=\zero$ (for the
  weak-* topology on $\Delta$). 
\end{lemma}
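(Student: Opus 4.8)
The plan is to reduce everything to one elementary pointwise inequality. Under the usual a.s.\ order, $L^\infty$ is a vector lattice (as recalled in Remark~\ref{rem:5-et-5'-SIS}), so $\eta \vee \eta'$ is simply the pointwise maximum $\max(\eta,\eta')$. The cornerstone I would establish first is the sandwich
\[
  \zero \le \eta \vee \eta' - \eta' \le \eta \qquad \mu\text{-a.s., for every } \eta \in \Delta .
\]
The left inequality is clear since $\eta \vee \eta' \ge \eta'$; for the right one, note that $\eta' \ge \zero$ gives $\eta - \eta' \le \eta$, while $\eta \ge \zero$ gives $0 \le \eta$, so $\eta \vee \eta' - \eta' = (\eta-\eta')_+ \le \eta$. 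Since $G(\zero) = \zero \vee \eta' = \eta'$, continuity of $G$ at $\zero$ is exactly the statement that $\eta \vee \eta'$ is weak-* close to $\eta'$ once $\eta$ is weak-* close to $\zero$, and the sandwich is what lets me dominate $\eta \vee \eta' - \eta'$ by $\eta$ itself.

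I would then argue directly with the defining neighbourhoods of the weak-* topology, rather than with sequences. A basic weak-* neighbourhood of $\eta' = G(\zero)$ has the form
\[
  V = \{ g \in L^\infty \colon \abs{\textstyle\int_\Omega (g-\eta')\, h_i \, \rd \mu} < \varepsilon, \ i=1,\dots,n \}
\]
for some $h_1,\dots,h_n \in L^1$ and $\varepsilon>0$. Because each $\abs{h_i}$ again lies in $L^1$, the set
\[
  U = \{ \eta \in L^\infty \colon \textstyle\int_\Omega \eta\, \abs{h_i}\, \rd \mu < \varepsilon, \ i=1,\dots,n \}
\]
is a weak-* open neighbourhood of $\zero$ (each map $\eta \mapsto \int_\Omega \eta\,\abs{h_i}\,\rd\mu$ is weak-* continuous and vanishes at $\zero$). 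For $\eta \in U \cap \Delta$ I would use $\eta \ge \zero$ together with the sandwich, which makes $\eta\vee\eta'-\eta'$ non-negative, to get
\[
  \abs{\int_\Omega (\eta\vee\eta' - \eta')\, h_i\, \rd\mu} \le \int_\Omega (\eta\vee\eta'-\eta')\, \abs{h_i}\, \rd\mu \le \int_\Omega \eta\, \abs{h_i}\, \rd\mu < \varepsilon ,
\]
so that $G(\eta) = \eta \vee \eta' \in V$. This is precisely continuity of $G$ at $\zero$.

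I do not expect a genuine obstacle, but the one point that genuinely needs care — and the reason I would phrase the argument through neighbourhoods rather than convergent sequences — is that the weak-* topology on $\Delta$ need not be metrizable (it is so only when $L^1$ is separable), so sequential continuity would not by itself give topological continuity. The sandwich inequality is what carries the weight of the proof: it reduces testing $\eta\vee\eta'-\eta'$ to testing against the \emph{non-negative} functions $\abs{h_i}$ and then dominates everything by $\eta$, so that the single hypothesis $\eta \to \zero$ weak-* is exactly what is needed, with no further structure required.
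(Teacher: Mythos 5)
Your proof is correct, and its analytic core is the same estimate the paper uses: you dominate $\eta\vee\eta'-\eta'=(\eta-\eta')_+$ by $\eta$ and test against $\abs{h}$ for $h\in L^1$, while the paper writes $\eta\vee\eta'=\eta+\eta'-\eta\wedge\eta'$ and bounds $\int_\Omega(\eta_n\wedge\eta')\,h\,\rd\mu\le\int_\Omega\eta_n\,h\,\rd\mu$ for $h\in L^1_+$ --- the two inequalities are the same sandwich $\zero\le\eta\vee\eta'-\eta'\le\eta$ in disguise. The genuine difference is in the topological packaging. The paper first reduces continuity to sequential continuity by invoking a lemma from the companion paper asserting that $\Delta$ with the weak-* topology is a sequential space, and then runs the estimate along a sequence $\eta_n$ converging weakly-* to $\zero$. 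You instead work directly with a sub-basic weak-* neighbourhood $V$ of $\eta'$ and exhibit an explicit preimage neighbourhood $U$ of $\zero$ built from the $\abs{h_i}$; this is self-contained, needs no metrizability or sequential-space input, and correctly identifies why a naive sequential argument would otherwise be insufficient (the weak-* topology on $\Delta$ need not be metrizable for a general probability space). What the paper's route buys is brevity on the page, since the sequential reduction is outsourced to the companion paper and reused elsewhere; what your route buys is independence from that external lemma and a proof that works verbatim for the topological (not merely sequential) notion of continuity. Both arguments are valid and establish the lemma.
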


\begin{proof}
  According to \cite[Lemma~2.3]{ddz-theory-topo},  a function defined on
  $\Delta$ is continuous  if and only if it  is sequentially continuous,
  as  $\Delta$ (endowed with the weak-* topology) is a sequential
  space. Hence any.   Let $(\eta_n, n\in \N)$ be a  sequence of elements
  of    $\Delta$   which    weakly-*    converges    to   $\zero$.    As
  $f    \vee   g    =    f+g    -f   \wedge    g$,    to   prove    that
  $\lim_{n\rightarrow  \infty  }  G(\eta_n)=  G(\zero)=\eta'$  (for  the
  weak-* topology), it is enough to  prove that for all $h\in L^1_+$, we
  have:
\[
  \lim_{n \rightarrow \infty } \int _\Omega (\eta_n \wedge \eta')\, h \,
  \rd \mu= 0.
\]
This                 is                  obvious                 because
$ \int  _\Omega (\eta_n \wedge  \eta')\, h  \, \rd \mu\leq  \int _\Omega
\eta_n  \, h  \, \rd  \mu$  as $\eta_n$  and $h$  are non-negative,  and
because
$ \lim_{n \rightarrow \infty  } \int _\Omega \eta_n \, h  \, \rd \mu= 0$
as the sequence $(\eta_n, n\in \N)$ weakly-* converges to $\zero$.
\end{proof}

\subsection{Summary of the results for the SIS model}
\label{sec:summary}

We summarize in Table~\ref{tab:not-PAP} the results obtained when applying the
results of Section~\ref{sec:P-AntiP}  to  the  SIS  model  with  the  loss
function:
\[
  \loss\in \{R_e, \I\}
  \quad\text{and}\quad
  \Delta=\DeltaSIS=\{\eta\in L^\infty \, \colon\, \zero \leq  \eta \leq
  \un\}.
\]

  Suppose Assumption~\ref{hyp:k-g}  on the SIS model  holds and consider
  the loss function $\loss\in \{R_e, \I\}$ on $\Delta$.
We  assume that $C$ is  continuous with respect to the weak-* topology
   on $\Delta$ with $C(\un)=0$. To  avoid  technicalities,  we   focus  on  a
  decreasing cost function $C$ (notice this  is the case for the uniform
  cost $\costu$  in~\eqref{eq:def-C}, and more generally  for the affine
  cost $\costa$  in~\eqref{eq:def-C0} with positive  density $\costad$).
Finally, we assume that $R_0>0$ if $\loss=R_e$
   and $R_0>1$ if $\loss=\I$.

   We gather in Table~\ref{tab:not-PAP} the following:
   \begin{enumerate}
     \item The various notation and definitions.
     \item Properties of the Pareto frontier and associated functions, in
       the left column. Most of those follow from Proposition~\ref{prop:f_properties-2}. Proposition ~\ref{prop:f_properties} is used to show that $\Cinf$ decreases,
       and  Lemma~\ref{lem:c-dec+L-hom} to get the value of $\lossinf(0)$. 
     \item Properties of the feasible region, see Proposition~\ref{prop:trou}.
     \item Properties of the anti-Pareto frontier and associated functions,
       in the right column. For simplicity, we additionally assume
       here that the kernel is monatomic  and refer
         to~\cite[Section~5]{ddz-cordon} when this assumption does not
         hold. 
       Most results in this column come from
       Proposition~\ref{prop:f_properties**}
       (for $\loss=R_e$) and~\ref{prop:f_properties=I} (for $\loss=\I$).
       The value of $\Csup(\maxloss)$ is studied in
       Proposition~\ref{prop:L**}.
     
     \end{enumerate}

\begin{figure}
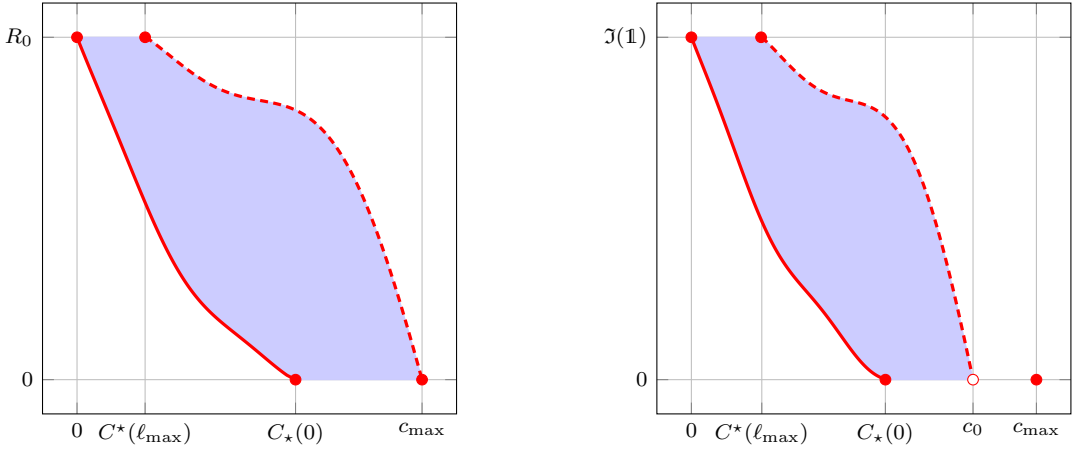

 \begin{subfigure}[T]{.5\textwidth} \centering
   \includestandalone{frontiers_re_mono}%
   \caption{%
     Loss function $\loss=R_e$   with $R_0>0$ (also $\maxloss=R_0$ and $\Csup(\maxloss)=C(\un_{\oa})$).
   }
   \label{fig:mona-Re}
 \end{subfigure}%
 \begin{subfigure}[T]{.5\textwidth}
   \centering
   \includestandalone{frontiers_i_mono}%
   \caption{%
   Loss  function
     $\loss=\I$ with $R_0>1$ (also $\maxloss=\I_0=\I(\un)>0$ and
     $\Csup(\maxloss)=C(\un_{\{\mathfrak{g}>0\}})$).
   }%
   \label{fig:mona-I}
 \end{subfigure}%
 \caption{%
   Typical shape of the Pareto frontier $\F$ in solid red line, the
   anti-Pareto frontier $\AF$ in red dashed line and the feasible
   region $\FF$ in light blue for the SIS model with a monatomic kernel.
 }%
 \label{fig:monatomic}
\end{figure}

     We also
represent  in  Fig.~\ref{fig:monatomic}  the  typical  Pareto  and
anti-Pareto  frontiers for  the loss  $R_e$ and  $\I$ with  a continuous
decreasing  cost function,  when  the transmission  rate  kernel $k$  is
monatomic.

\newcommand{\mc}[1]{\multicolumn{2}{l}{
    \hfill #1\hfill\hfill}}
\newcommand{\centeredcell}[1]{\multicolumn{1}{c}{#1}}
\newcommand{\bothlosses}[1]{
  #1
}
\newcommand{\maxcostrange}{\mathrm{range(\Csup)}}
\newcommand{\mincostrange}{\mathrm{range(\Cinf)}}
\begin{table}
   \centering
   \small
  \begin{tabular}{ @{\extracolsep{\fill}} lll}
    \toprule
    & {``Best'' vaccinations} &
     {``Worst'' vaccinations (monatomic $k$)} \\
    \midrule
    Assumptions & 
    \mc{Assumption~\ref{hyp:k-g} on SIS model, 
   $C$ continuous
                  decreasing.} \\
    \midrule
    Opt. problem  &
     $\min_\Delta(C,\loss)$ &
     \bothlosses{$\max_\Delta(C,\loss)$} \\
    \midrule
    Range of (cost,loss)
    &\mc{
     \( \begin{aligned}
         \maxcost & :\,=\max_\Delta C = C(\zero)>0 \\
         \maxloss & :\,=\max_\Delta \loss =    \loss(\un)>0 \\
         \sq & :\,=[0, \maxcost]\times [0, \maxloss]
      \end{aligned}\)
    }\\
    \midrule
    Opt.\ cost
     &
            $\Cinf (\ell):\,=\min_{\, \loss\leq  \ell}\,  C$
    &\bothlosses{ $\Csup (\ell):\, =\max_{\, \loss\geq  \ell}\,  C$}\\
    Opt.\ loss
    & $\lossinf (c):\,=\min_{\, C\leq  c}\,  \loss$
    &\bothlosses{ $\lossup (c):\, =\max_{\, C\geq  c}\,  \loss$}\\
    \midrule
      Possible outcomes
    & \mc{%
      \(\begin{aligned}
           \FF := (C, \loss)(\Delta) 
                &= \set{(c, \ell) \in \sq\, \colon\,
      \lossinf(c)\leq \ell \leq \lossup(c)} \\
                &= \set{(c, \ell) \in \sq\, \colon\, \Cinf(\ell)\leq c \leq  \Csup(\ell)} \\
          \FF \text{ is compact} & \text{ and path connected} \\
         \FF \text{ has no hole} & \text{ (that is, } \FF^c \text{ is connected}) 
         \end{aligned}
      \)
      } \\
    \midrule
    Opt.\ cost: properties
    &  $\Cinf$ is  decreasing
                            & \bothlosses{$\Csup$ is  decreasing}\\
    & \bothlosses{$\mincostrange: = \Cinf([0,\maxloss])$}
                                           & \bothlosses{$\maxcostrange: = \Csup([0,\maxloss])$} 
    \\
 & $\Cinf(0)\leq  \maxcost$
                            &$\Csup(0)=\maxcost$ and $\cplus := \Csup(0+)$
    \\
    & $\Cinf$ is continuous &
                               $\Csup$ is continuous on $(0, \maxloss]$ \\
    & &$\Csup$ is \(\left\{ \begin{aligned}
          &\quad\text{ continuous at $0$,  }   \cplus =  \maxcost & (\loss=R_e) \\
          &\text{discontinuous at $0$, } \cplus  < \maxcost & (\loss=\I)
        \end{aligned}\right.\) \\
   &  $\Cinf(\maxloss)=0$
                            &  $\Csup(\maxloss)=\begin{cases}
                                                  C(\un_{\oa}) & (\loss=R_e) \\
                                                  C(\un_{\{\mathfrak{g}>0\}})
                                                               &
                                                                 (\loss=\I)
                                                \end{cases}
                              $ \\
    &$\mincostrange = [0, \Cinf(0)]$
                                           & $\maxcostrange = \begin{cases}
                         [\Csup(\maxloss),\maxcost] & (\loss=R_e) \\
                         [\Csup(\maxloss),c_0) \cup \{\maxcost\} & (\loss=\I)
                       \end{cases}
      $\\
    \midrule
    Opt.\ loss: properties
    &%
      \multicolumn{1}{l}{\noindent\begin{tabular}{@{}l}%
                       $\lossinf$ is continuous \\
                        $\lossinf$ is  decreasing on $[0, \Cinf(0)]$ \\
                       $\lossinf$ is constant on $[\Cinf(0), \maxcost]$ \\
                       $\lossinf(0)=\maxloss$ and  $\lossinf(\maxcost)=0$
                     \end{tabular}%
      }
    &
      \multicolumn{1}{l}{\begin{tabular}{@{}l}%
     $\lossup$ is continuous \\
     $\lossup$ is  decreasing on $[\Csup(\maxloss), c_0]$\\
    $ \lossup$ is constant on $[0, \Csup(\maxloss)]$ and on
      $[\cplus, \maxcost]$\\  
     $\lossup(0) =\maxloss$ and $\lossup(\maxcost)=0$
                     \end{tabular}%
      } \\
    \midrule
    
Inverse formula
    & $\lossinf\circ \Cinf =\Id$ on $[0, \maxloss]$
                            &\bothlosses{ $\lossup \circ \Csup =\Id$ on $[0, \maxloss]$}\\
    & $\Cinf\circ \lossinf =\Id$ on $\mincostrange$
    &  \bothlosses{ $\Csup\circ \lossup =\Id$
      on $\maxcostrange$} \\
    \midrule
Optimal strategies
    & \(
      \begin{aligned}
        \cp :=&  \set{C=\Cinf\circ \loss, \loss =  \lossinf \circ C } \\
             =&   \set{C=\Cinf\circ \loss} \\
             =&   \set{ \loss=\lossinf \circ C, C\in \mincostrange }
      \end{aligned}
      \)
    &\bothlosses{%
      \(\begin{aligned}
          \cpa :=& \set{C=\Csup\circ \loss} \cap \set{\loss = \lossup \circ C } \\
               =&  \set{C=\Csup\circ \loss} \\
               =&  \set{ \loss=\lossup \circ C, C\in \maxcostrange}
        \end{aligned}
      \)} \\
    & $\cp$ is compact &  $\cpa$ is
                         $\begin{cases} \text{compact} & (\loss=R_e) \\
                            \text{not compact}         & (\loss=\I)
                          \end{cases}
                         $ \\
    & $\un$ is Pareto opt. & 
                              $\left.\begin{aligned}
                                  &\un_{\oa} &(\loss=R_e) \\
                             & \un_{\{\mathfrak{g}>0\}}  &
                                                                 (\loss=\I)
                              \end{aligned}\, \right\}$ is
                              anti-Pareto opt. \\
    \midrule
 
{Optimal frontier}
    & \(\begin{aligned}
          \F :=&  (C, \loss)(\cp) \\
          =& (\Cinf, \Id)([0, \maxloss]) \\
          =& (\Id, \lossinf)(\mincostrange)
        \end{aligned}
      \)&
          \bothlosses{%
          \(\begin{aligned}
          \AF :=&  (C, \loss)(\cpa) \\
              =&   (\Csup, \Id)([0, \maxloss]) \\
    =& ( \Id, \lossup) (\maxcostrange)
            \end{aligned}\)
          }\\
    & $\F$ is connected and compact &  $\AF$ is
                                       $\begin{cases}
   \text{connected and compact} & (\loss = R_e) \\
   \text{not connected, not compact} &(\loss=\I)
                                        \end{cases}
                                       $\\
                                       \bottomrule
  \end{tabular}%
  
  \caption{%
    Summary of notation and results for  the  SIS  model and $\Delta=\DeltaSIS$.}
  \label{tab:not-PAP}
\end{table}

\begin{remark}[Result for other choices of $\Delta$]
  \label{rem:autre-DSIS}
  It is left to the reader to check that the statements given in
  Tab.~\ref{tab:not-PAP} hold also for the vaccinations strategies
  sets $\Delta_i\subset \DeltaSIS$, with
  $i\in \{\mathrm{Osc}, \mathrm{Block}, \mathrm{Ord}\}$, described in
  Section~\ref{sec:vaccination-strat}, but for the value of
  $\Csup(\maxloss)$ and the corresponding anti-Pareto vaccination
  strategies.  Indeed, the strategy $\eta_\mathrm{SIS}=\un_{\oa}$ for
  the loss $\loss=R_e$ or $\eta_\mathrm{SIS}=\ind{\{\mathfrak{g}>0\}}$
  for the loss $\loss=\I$ is no longer anti-Pareto optimal and it has
  to be replaced by
  $\eta_i=\min\{\eta\in \Delta_i\, \colon\, \eta\geq
  \eta_\mathrm{SIS}\}$ which is anti-Pareto optimal (in $\Delta_i$)
  and $C(\eta_i)=\Csup(\maxloss)$.

Thus, for the almost uniform vaccinations (that is, case
$i=\mathrm{Osc}$), we have that $\eta_\mathrm{Osc}=\max ((1-\delta)\un, \eta_\mathrm{SIS}))$ is   anti-Pareto   optimal   (in
$\DeltaOsc$), and  $\Csup(\maxloss)=C(\eta_\mathrm{Osc})$.
\end{remark}


\printbibliography
\end{document}